\newcounter{ab}
\newtheorem{theorem}{Theorem}[section]
\newtheorem{lemma}[theorem]{Lemma}
\newtheorem{proposition}[theorem]{Proposition}
\newtheorem{corollary}[theorem]{Corollary}
\newtheorem{conjecture}{Conjecture}
\newtheorem{claim}[ab]{Claim}
\newtheorem{remark}[theorem]{Remark}
\DeclareMathOperator{\aut}{Aut}
\DeclareMathOperator{\dete}{Det}
\DeclareMathOperator{\stab}{Stab}
\date{}
\title{Resolving sets for breaking symmetries of graphs}
\author{Delia Garijo
  \and Antonio Gonz\'alez
  \and Alberto M\'arquez }
\begin{document}
\maketitle

\begin{abstract}
This paper deals with the maximum value of the difference between the determining number and the
metric dimension of a graph as a function of its order. Our
technique requires to use locating-dominating sets, and perform an independent study
on other functions related to these sets. Thus, we obtain lower and upper bounds
on all these functions by means of very diverse tools. Among them are
 some adequate constructions of graphs, a variant of a classical result
in graph domination and a polynomial time algorithm  that produces both
distinguishing sets and determining sets. 
Further, we consider specific families of graphs where the restrictions of these functions
can be computed.
To this end, we utilize two well-known objects in graph theory:
$k$-dominating sets and matchings.

\end{abstract}

\section{Introduction and preliminaries}\label{sec:intro}

Every resolving parameter conveys  useful information about the behavior of distances in a graph.
 Thus, considering  several of   those  parameters together provides stronger properties of the underlying graph,
 which is the reason for studying the relations among them.
 Indeed, much effort has gone into relating metric dimension and other similar
 invariants including partition
 dimension~\cite{chappel,tomescu}, upper dimension~\cite{chartrandupper,MDUDRN},  and resolving
 number~\cite{chartrandchrom,jannesarion}, to name but a few.
 Combining metric dimension and  determining number
 allows us to obtain not only metric properties of   graphs but also an extra information about their symmetries.
 However, there are no many papers dealing with the connection between    these  two parameters: the determining number of a graph
is bounded above by its metric dimension \cite{boutin,erwin}. This prompts the following question posed by Boutin~\cite{boutin}: {\em Can the difference
between the determining number and the metric dimension of a graph 
 be arbitrarily large?}
  To deal with  this question, which is the main problem of this paper, we  next define   these  parameters.

Let $G$ be a connected graph\footnote{
Graphs in this paper are finite, undirected and simple. The vertex set and edge set of a graph $G$
are denoted by $V (G)$ and $E(G)$, respectively, and the order of $G$ is $n=|V(G)|$. We denote by $\overline{G}$ the complement of $G$.
 An {\em automorphism} of $G$ is a bijective mapping $f:V(G)\rightarrow V(G)$ such that $\{f(u),f(v)\}\in E(G)$
 if and only if $\{u,v\}\in E(G)$.
  The {\em automorphism group} of $G$ is written as $\aut (G)$, and its identity element is denoted by $id_G$. The {\it distance} $d_G(u,v)$ between two vertices $u$ and $v$ is the length of a
shortest $u$-$v$ path. We write $N_G(u)$ and $N_G[u]$ for the open and closed neighborhoods of any vertex $u\in V(G)$, respectively.
 Finally, $\delta_G (u)$ denotes the degree of $u$ and $\delta(G)$ is the  minimum degree  of $G$.
 We drop the subscript $G$ from these notations if the graph $G$ is clear from the context.}.
   Given a set $S\subseteq V(G)$, the {\it  stabilizer} of $S$ is $\stab (S) =\{ \phi\in \aut(G) : \phi(u) = u, \forall u\in S\}$,
 and $S$ is a {\em determining set} of $G$ if $\stab (S)$ is trivial.
The {\em determining number} of $G$, denoted by $\dete (G)$,
is the minimum cardinality of a determining set of $G$.
 A vertex $u\in V(G)$ {\it resolves} a pair $\{x,y\}\subseteq V(G)$ if $d(u,x)\neq d(u,y)$, and
   $S$ is a {\it resolving set} of $G$ if every pair of vertices of $G$ is resolved by some
vertex of $S$. The {\it metric dimension}
of $G$, written as $\dim (G)$, is the minimum order of a resolving set of $G$, and a resolving set of
size $\dim (G)$ is called a {\em metric basis} of $G$.

Determining sets were introduced in 2006 by Boutin \cite{boutin}, and independently by Erwin and Harary \cite{erwin},
 who adopted the term {\em fixing set}.
However, this concept was defined in a more general context in 1971 by Sims \cite{sims}:
a {\em base} of a permutation group of a set is a subset of elements whose stabilizer is trivial.
Also in the 1970s, resolving sets were introduced by Harary and Melter \cite{melter}, and
independently by Slater \cite{slater}.
These two types of sets have been widely studied in the literature because of
their multiple applications in very diverse areas. For instance,
 bases are useful tools for storing and analyzing large permutation groups \cite{blaha},
 and resolving sets are utilized for the graph isomorphism problem \cite{babai}.
 We refer the reader to the survey of Bailey and Cameron~\cite{bailey} for more references on these topics.

 As it was said before, there is a relationship between the determining number and the metric dimension: every resolving set
of a graph $G$ is also a determining set, and consequently $\dete (G)\leq \dim (G)$ \cite{boutin,erwin}.
 Let $(\dim-\dete)(n)$ be the maximum value of $\dim(G)-\dete(G)$ over all graphs $G$ of
order $n$. Thus, the computation of this function is equivalent to answer the above-mentioned question asked by Boutin~\cite{boutin}
about the difference between our parameters, which is widely studied by Cáceres et al.~\cite{cagapuse}.
 Namely, they provide the following bounds.
\begin{proposition} {\rm \cite{cagapuse} }\label{lowerCAGAPUSE} For every $n\geq 8$,
$$\lfloor\frac{2}{5}n\rfloor -2\leq ({\rm dim}-{\rm Det})(n)\leq n-2.$$
\end{proposition}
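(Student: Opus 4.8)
The two bounds need completely different arguments: the upper one is a short deduction from the extremal behaviour of the metric dimension, while the lower one will be witnessed by a single well-studied family, the wheels.

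For the upper bound $(\dim-\dete)(n)\le n-2$, let $G$ be any connected graph of order $n$. I would use the classical facts that $\dim(G)\le n-1$ always and that $\dim(G)=n-1$ if and only if $G=K_n$. If $\dete(G)\ge 1$, then $\dim(G)-\dete(G)\le(n-1)-1=n-2$. If $\dete(G)=0$, then $G$ is rigid, hence $G\neq K_n$ (which is not rigid for $n\ge 2$), so $\dim(G)\le n-2$ and thus $\dim(G)-\dete(G)=\dim(G)\le n-2$. Taking the maximum over all such $G$ gives the claim. Note that $K_n$ itself contributes nothing, since $\dete(K_n)=n-1=\dim(K_n)$, which already suggests that $n-2$ is far from tight and motivates the rest of the paper.

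For the lower bound the plan is to take $G=W_{n-1}=C_{n-1}+K_1$, the wheel of order $n$, and to show $\dim(W_{n-1})-\dete(W_{n-1})=\lfloor\frac{2n}{5}\rfloor-2$ for $n\ge 8$. I would first compute the determining number. When $n-1\ge 4$ the hub is the unique vertex of degree $n-1$, whereas every rim vertex has degree $3$; hence every automorphism of $W_{n-1}$ fixes the hub and therefore restricts to an automorphism of the rim cycle, and conversely every automorphism of $C_{n-1}$ extends. Thus $\aut(W_{n-1})\cong\aut(C_{n-1})\cong D_{n-1}$, and since fixing two adjacent rim vertices destroys all rotations and reflections of $C_{n-1}$, we get $\dete(W_{n-1})\le 2$ (in fact equality holds, but the inequality is all we need). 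I would then invoke the known value of the metric dimension of a wheel,
\[
\dim(W_m)=\left\lfloor\frac{2m+2}{5}\right\rfloor\qquad\text{for }m\ge 7,
\]
which with $m=n-1\ge 7$, i.e.\ $n\ge 8$, gives $\dim(W_{n-1})=\lfloor\frac{2(n-1)+2}{5}\rfloor=\lfloor\frac{2n}{5}\rfloor$. Combining,
\[
(\dim-\dete)(n)\ \ge\ \dim(W_{n-1})-\dete(W_{n-1})\ \ge\ \left\lfloor\frac{2n}{5}\right\rfloor-2 .
\]

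The genuinely hard ingredient is the formula $\dim(W_m)=\lfloor(2m+2)/5\rfloor$, which I would cite rather than reprove and which is where the real work lies. Its lower-bound half uses that $W_m$ has diameter $2$: a rim vertex in a resolving set only separates pairs according to adjacency to it, and a packing argument on the cyclic gaps between the chosen rim vertices forces roughly $\frac{2}{5}m$ of them to be selected; the matching upper bound requires exhibiting a resolving set of that size following a periodic pattern around the rim. The determining-number computation and the entire upper bound are routine by comparison; the one point requiring care is the bookkeeping that makes the hypothesis $n\ge 8$ (equivalently $m=n-1\ge 7$) exactly the range in which the wheel formula is valid.
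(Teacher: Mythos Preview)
Your proposal is correct and matches the paper's own account: the proposition is quoted from \cite{cagapuse} without proof, and the paper explicitly states that the lower bound comes from the wheel graphs $W_{1,n}=K_1+C_n$, for which $\dim(W_{1,n})-\dete(W_{1,n})=\lfloor\tfrac{2}{5}n\rfloor-2$, exactly the witness you use. Your upper-bound argument via the characterization $\dim(G)=n-1\iff G\cong K_n$ is the standard one and fills in what the paper leaves implicit.
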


Fundamental to our technique,  which lets us
improve significatively the above result, are locating-dominating sets.
Hence,  we next introduce these sets
 together with the functions $(\lambda-{\rm Det})(n)$ and $\lambda(n)$,
for which we have to develop an independent study that is also of  interest.

A vertex $u\in V(G)$ {\em distinguishes} a pair $\{x,y\}\subseteq V(G)$ if either $u\in\{x,y\}$ or $N(x)\cap \{u\}\neq N(y)\cap \{u\}$,
 and a set $D\subseteq V(G)$ is a {\em distinguishing set} of $G$ if every pair of $V(G)$ is distinguished by some vertex of $D$. If $D$ is also a {\em dominating set} of $G$, i.e., $N(x)\cap D\neq \emptyset$ for every $x\in V(G)\setminus D$, then we say that $D$
 is a {\em locating-dominating set} of $G$.
 The minimum cardinality
 of a locating-dominating set  of $G$ is its {\em locating-domination number}, denoted by $\lambda (G)$.

Distinguishing sets were defined by Babai \cite{babai} when constructing canonical labelings
for the graph isomorphism problem, while Slater \cite{slaterLD} introduced locating-dominating
sets in the context of domination. However, these two concepts are in essence the same: one can easily check that
every distinguishing set becomes a dominating set  by adding at most one vertex. This implies the following observation.

\begin{remark} \label{sep-loc-dom} For any distinguishing set $D$ of a graph $G$, $\lambda(G)\leq |D|+1$.
\end{remark}

Every locating-dominating set of $G$ is clearly a resolving set, and so
${\rm Det} (G)\leq {\rm dim} (G)\leq \lambda (G)$ which leads us to pose
a similar question to that of Boutin~\cite{boutin} but concerning the difference
 $\lambda(G)-\dete(G)$.
Thus, let $(\lambda-{\rm Det})(n)$ and $\lambda(n)$ be the maximum values
of, respectively, $\lambda(G)-{\rm Det} (G)$ and $\lambda (G)$ over all graphs $G$ of order $n$.
 Although the function $\lambda(n)$ equals $n-1$ (just take the complete graph $K_n$), we need to define it
 because we shall consider a non-trivial restriction of $\lambda (n)$  which is quite useful throughout the paper. Therefore,
it is straightforward that
\begin{equation}\label{original}
({\rm dim}- {\rm Det})(n)\leq (\lambda-{\rm Det})(n)\leq \lambda(n)=n-1.
\end{equation}

 This paper undertakes a study on the function $({\rm dim}- {\rm Det})(n)$ which
 requires to develop a parallel study on  $(\lambda-\dete)(n)$ and the function
 $\lambda_{|_{\mathcal C^*}}(n)$ described in Section~\ref{sec:twin-free}.
 We thus start
  by constructing appropriate families of graphs which provide new lower
   bounds on $({\rm dim}- {\rm Det})(n)$ and $(\lambda-\dete)(n)$, 
  improving the lower bound of Proposition~\ref{lowerCAGAPUSE}  by Cáceres et al.~\cite{cagapuse}.
  Further, we conjecture that these are precisely the exact values of these functions. 
To improve the upper bound, we require a more sophisticated method which uses the locating-domination
number of twin-free graphs, namely the function $\lambda_{|_{\mathcal C^*}}(n)$.
Indeed, we first prove that this function is an upper bound on $(\dim-\dete)(n)$ and $(\lambda-\dete)(n)$,
 and  then  conjecture a presumable value of $\lambda_{|_{\mathcal C^*}}(n)$ which will be  supported through  the paper.

Subsequently, we obtain two explicit upper bounds on $\lambda_{|_{\mathcal C^*}}(n)$
  in Sections \ref{sec:ore} and \ref{sec:greedy}, respectively.
For the first one,  we give a different version of a classical theorem in domination theory due to Ore~\cite{ore}.
 This version leads us to a series of relationships between the locating-domination number
 and classical graph parameters in twin-free graphs,
 similar to the relations established among other domination parameters in many papers (see \cite{fod} for a number of examples). 
  Besides their own interest, these relations yield a first explicit bound on $\lambda_{|_{\mathcal C^*}}(n)$
   by using a nice Ramsey-type result due to Erd\H{o}s and Szekeres~\cite{szekeres}.

The second upper  bound that we provide on $\lambda_{|_{\mathcal C^*}}(n)$ is, until now, the best
 bound known on  $(\dim-\dete)(n)$.
 It is obtained from the greedy algorithm described in Section~\ref{sec:greedy} which
  produces both distinguishing sets and determining sets of bounded size in polynomial time.
  Hence, we also obtain a bound on the determining number of a twin-free graph.

Finally, we devote the last two sections  to the family of graphs
not containing the complete bipartite graph $K_{2,k}$ as a subgraph. Concretely, we
provide bounds and exact values of  our main functions  restricted  to
graphs without the cycle $C_4$ as a subgraph in Section~\ref{sec:C4}.
For this purpose, we obtain relationships between the locating-domination number of a twin-free graph
and other two well-known parameters: the $k$-domination number and the matching number.
Hence, we get bounds on these two invariants similar to other relations
  provided in a number of papers 
  (see Section~\ref{sec:C4} for the details).
  Furthermore, we compute the restrictions of $(\dim-\dete)(n)$ and $(\lambda-\dete)(n)$ to the family of
  trees in Section~\ref{sec:trees}, thereby closing
 the study initiated by Cáceres et al.~\cite{cagapuse} on this class of graphs.

\section{Lower bounds on $(\dim-\dete)(n)$ and $(\lambda-\dete)(n)$}\label{sec:lowerbound}

The question raised by Boutin~\cite{boutin}
    arose   from the fact that all graphs $G$ where
 she computed $\dim (G)-\dete(G)$ have a very small value of this difference. 
 Thus, Cáceres et al.~\cite{cagapuse} found a family of graphs with constant determining number and   metric dimension
 with linear growth:
  the wheel
graphs $W_{1,n}=K_1+C_n$ for which $\dim (W_{1,n})-\dete(W_{1,n})=\lfloor\frac{2}{5}n\rfloor -2$.
This implies a lower bound on the maximum value of this difference, i.e.,
a lower bound on the function $(\dim-\dete)(n)$ (see Proposition~\ref{lowerCAGAPUSE} above).
 In this section, we improve this   bound and also give a lower bound on $(\lambda-\dete)(n)$.
 To do this, we next provide two appropriate families of graphs.

For an integer $r\geq 6$, let $T_r$ be a path $(u_1,...,u_r)$ with a pendant vertex $u_0$ adjacent to $u_3$.
The {\em corona product} $G\circ K_1$ is the graph obtained from attaching a pendant vertex to every
vertex of any graph $G$. Let $G_r= T_r\circ K_1$ and let $H_r$ be the graph resulting from $G_r$ by attaching
a pendant vertex $v_0'$ to $u_0$ (see Figure~\ref{GrHr}).

\begin{figure}[ht]
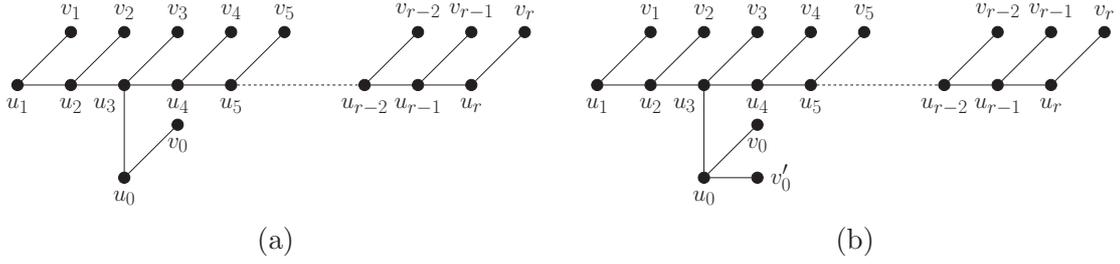

\begin{center}
\begin{tabular}{cc}
\includegraphics[width=72mm]{Gr}
&
\includegraphics[width=72mm]{Hr}\\
(a)&(b)
\end{tabular}
\end{center}
\caption{The graphs (a) $G_r$ and (b) $H_r$.}\label{GrHr}
\end{figure}

The following lemma gives some evaluations of the main parameters considered
in the paper for the graphs $G_r$, $H_r$ and their complements $\overline{G}_r$ and $\overline{H}_r$. These are the key tools for proving Theorem~\ref{lowerbound} below.

\begin{lemma}\label{G_r}
For every $r\geq 6$, the following statements hold:
\begin{enumerate}
\item $\dete (G_r)=0$ and $\dete (H_r)=1$.
\item $\dim (\overline{G}_r)=r$ and $\dim (\overline{H}_r)=r+1$.
\item $\lambda (G_r)=r+1$ and $\lambda (H_r)=r+2$.
\end{enumerate}

\end{lemma}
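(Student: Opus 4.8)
\emph{Proof plan.} All three statements rest on two observations: the ``spine'' of each graph is a rigid tree, and every metric or domination question can be rephrased in terms of \emph{distinguishing sets}, which interact transparently with the pendant structure. For item (1) I would first note that the leaves of $G_r$ are exactly $v_0,\dots,v_r$ and the leaves of $H_r$ are $v_0,v_0',v_1,\dots,v_r$, while every $u_i$ has degree at least $2$; since automorphisms preserve degrees they fix the leaf/non-leaf partition, and the non-leaves induce $T_r$ in both cases. The key sublemma is that $T_r$ is rigid: $u_3$ is its unique vertex of degree $3$, and its three branches are paths of pairwise distinct lengths $1$, $2$ and $r-3$ (this is where $r\ge 6$ is used), so each branch is fixed setwise and, being a path, pointwise. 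Hence every automorphism fixes all $u_i$ and can only permute leaves sharing the same neighbour $u_i$; in $G_r$ each $u_i$ carries a single pendant, so $\aut(G_r)$ is trivial and $\dete(G_r)=0$, whereas in $H_r$ only $u_0$ carries two pendants, so $\aut(H_r)=\{\,id,(v_0\,v_0')\,\}$, giving $\dete(H_r)\ge 1$, and $\{v_0\}$ is a determining set, hence $\dete(H_r)=1$.

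For item (2) the plan is to reduce to distinguishing sets of $G_r$ and $H_r$ themselves. Since these graphs are trees on at least $14$ vertices with maximum degree $4$, for every edge $xy$ the set $N[x]\cup N[y]$ has at most $8$ vertices, so $\overline{G}_r$ and $\overline{H}_r$ have diameter $2$; in a diameter-$2$ graph a vertex resolves a pair precisely when it distinguishes it, so a set resolves $\overline{G}$ iff it is a distinguishing set of $\overline{G}$, and distinguishing sets coincide for $G$ and $\overline{G}$. Thus $\dim(\overline{G}_r)$ and $\dim(\overline{H}_r)$ equal the minimum sizes of distinguishing sets of $G_r$ and $H_r$.

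For the lower bounds I would work with the pendant blocks $P_i=\{u_i,v_i\}$ (and $P_0=\{u_0,v_0,v_0'\}$ in $H_r$), which partition the vertex set. If $D\cap P_i=\emptyset$ for two indices $i\neq j$ then $v_i$ and $v_j$ cannot be distinguished, since every separating vertex lies in $\{u_i,u_j,v_i,v_j\}$; hence $D$ misses at most one block, giving $|D|\ge r$ for $G_r$. In $H_r$ the twins $v_0,v_0'$ also force $D$ to meet $\{v_0,v_0'\}$; the block count again gives $|D|\ge r$, and in the extremal case $|D|=r$ one checks that $D\cap P_0\in\{\{v_0\},\{v_0'\}\}$, so that the unique missed block $P_k$ together with the non-chosen twin leave two leaves whose trace $N(x)\cap D$ is empty, a contradiction; hence $|D|\ge r+1$. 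These are matched by the explicit distinguishing sets $\{u_i:i\neq3\}$ for $G_r$ and $\{u_i:i\neq3\}\cup\{v_0\}$ for $H_r$, verified by a short check that the traces $N(x)\cap D$ are pairwise distinct off $D$.

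For item (3), a locating-dominating set is a distinguishing set that also dominates; domination of the pendant $v_i$ forces $D$ to meet $\{u_i,v_i\}$ for every $i$, so $\lambda(G_r)\ge r+1$, matched by the spine $\{u_0,\dots,u_r\}$. For $H_r$, domination together with separation of $v_0,v_0'$ force $|D\cap\{u_0,v_0,v_0'\}|\ge 2$ (taking only $u_0$ fails to separate the twins; taking at most one of $v_0,v_0'$ without $u_0$ leaves a pendant undominated), while $|D\cap\{u_i,v_i\}|\ge 1$ for $i\ge1$; summing over the partition yields $\lambda(H_r)\ge r+2$, matched by $\{u_0,\dots,u_r\}\cup\{v_0\}$. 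I expect the main obstacle to be the extremal analysis in item (2) for $H_r$ — excluding distinguishing sets of size exactly $r$ — and, more mundanely, keeping the handful of small trace computations organized; everything else follows directly from the rigidity of $T_r$ and the pendant structure.
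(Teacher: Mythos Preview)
Your proposal is correct and, for items (1) and (3), essentially mirrors the paper's proof (the paper asserts $\aut(G_r)$ is trivial without detail; you supply the rigidity argument for $T_r$, and your domination count for $H_r$ is more explicit than the paper's ``arguing as above''). For item (2) you take a mildly different route: you first observe that $\overline{G}_r$ and $\overline{H}_r$ have diameter $2$, so resolving sets coincide with distinguishing sets, and then use that distinguishing sets are invariant under complementation to work entirely inside $G_r$ and $H_r$. The paper instead computes distances in the complement directly, arguing that if two blocks $\{u_i,v_i\},\{u_j,v_j\}$ are missed then $d_{\overline{G}_r}(u,v_i)=1=d_{\overline{G}_r}(u,v_j)$ for all $u\in S$. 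Both arguments hinge on the same pigeonhole observation (at most one pendant block can be avoided), but your reduction is reusable --- it says that for any graph $G$ whose complement has diameter $2$, $\dim(\overline{G})$ equals the minimum size of a distinguishing set of $G$ --- and it lets you give a cleaner extremal analysis for the lower bound $\dim(\overline{H}_r)\ge r+1$, which the paper leaves to ``the same arguments apply''. Your explicit witnesses ($\{u_i:i\neq 3\}$ rather than the paper's $\{u_0,\dots,u_{r-2},u_r\}$) are different but equally valid.
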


\begin{proof}
Let $V(G_r)=\{u_0,...,u_r,v_0,...,v_r\}$ and $E(G_r)=E(T_r)\cup \{\{u_i,v_i\}:0\leq i\leq r\}$.
 Also, let $V(H_r)=V(G_r)\cup\{v_0'\}$ and $E(H_r)=E(G_r)\cup\{\{u_0,v_0'\}\}$. The three statements are proved one by one.

\begin{enumerate}
\item $\dete (G_r)=0$ since the automorphism group of $G_r$ is trivial.
On the other hand, $\aut(H_r)=\{id_{H_r},f\}$ where $f$ interchanges $v_0$ and $v_0'$, and fixes any other vertex.
Hence, $S=\{v_0\}$ is clearly a minimum determining set of $H_r$ and so $\dete (H_r)=1$.

\item  Observe that every resolving set $S$ of $\overline{G}_r$ contains either $u_i$ or $v_i$
 for every $0\leq i\leq r$, except for at most one. Otherwise, there are $u_i,v_i,u_j,v_j\not\in S$
 for some $i\neq j$, which implies that $d(u,v_i)=1=d(u,v_j)$ for every $u\in S$ and so
 $S$ is not a resolving set of $\overline G_r$; a contradiction. Therefore, $\dim(\overline{G}_r)\geq r$ and equality is given by the set  $S=\{u_0,u_1,...,u_{r-2},u_r\}$ which is a metric basis of $\overline{G}_r$.

The same arguments apply to prove that $S\cup\{v_0'\}$ is a metric basis of $\overline H_r$, and then $\dim(\overline{H}_r)=r+1$.

\item Let $D$ be a locating-dominating set of $G_r$. Note that either $u_i$ or $v_i$ belongs to $D$
for every $0\leq i\leq r$ (otherwise $N(v_i)\cap D=\emptyset$ and so $D$ is not a dominating set of $G_r$; a contradiction).
 Hence, $\lambda (G_r)\geq r+1$ and
equality holds since $D=\{u_0,...,u_r\}$ is clearly a locating-dominating set of $G_r$.

Arguing as above, we can check that $D\cup \{v_0'\}$ is a minimum locating-dominating set of $H_r$ and so $\lambda(H_r)=r+2$.
\end{enumerate}\end{proof}

With these values in hand, we next obtain lower bounds on $(\dim - \dete)(n)$ and
$(\lambda-\dete)(n)$ which in particular improve Proposition~\ref{lowerCAGAPUSE} above  due to Cáceres et al.~\cite{cagapuse}.

\begin{theorem}\label{lowerbound} For every $n\geq 14$,
$$(\dim-\dete)(n)\geq\lfloor\frac{n}{2}\rfloor -1\qquad and \qquad (\lambda-\dete)(n)\geq\lfloor\frac{n}{2}\rfloor .$$
\end{theorem}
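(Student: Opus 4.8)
The plan is to instantiate the four graphs of Lemma~\ref{G_r} at suitable values of $r$ and simply read off the two differences. The only extra ingredient is the elementary fact that a graph and its complement have the same automorphism group as permutation groups on the common vertex set; hence a set is determining for $G$ if and only if it is determining for $\overline G$, so $\dete(\overline G)=\dete(G)$. Combining this with Lemma~\ref{G_r}(1) gives $\dete(\overline G_r)=0$ and $\dete(\overline H_r)=1$.

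Next I would pin down the orders. Since $T_r$ has the $r+1$ vertices $u_0,\dots,u_r$, the corona $G_r=T_r\circ K_1$ has $2r+2$ vertices and $H_r$ has one more, i.e. $2r+3$; the complements have the same orders. Therefore, for an even $n\ge 14$ I set $r=\tfrac n2-1\ge 6$ and work with $G_r$ and $\overline G_r$, while for an odd $n\ge 15$ I set $r=\tfrac{n-3}{2}\ge 6$ and work with $H_r$ and $\overline H_r$; this covers every $n\ge 14$.

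Now the computations are immediate. For $(\dim-\dete)(n)$: when $n$ is even, Lemma~\ref{G_r}(2) together with the observation above gives $\dim(\overline G_r)-\dete(\overline G_r)=r-0=\tfrac n2-1=\lfloor \tfrac n2\rfloor-1$; when $n$ is odd, $\dim(\overline H_r)-\dete(\overline H_r)=(r+1)-1=r=\tfrac{n-3}{2}=\lfloor \tfrac n2\rfloor-1$. For $(\lambda-\dete)(n)$: when $n$ is even, Lemma~\ref{G_r}(3) gives $\lambda(G_r)-\dete(G_r)=(r+1)-0=\tfrac n2=\lfloor \tfrac n2\rfloor$; when $n$ is odd, $\lambda(H_r)-\dete(H_r)=(r+2)-1=r+1=\tfrac{n-1}{2}=\lfloor \tfrac n2\rfloor$. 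Since $(\dim-\dete)(n)$ and $(\lambda-\dete)(n)$ are the maxima of the corresponding differences over all graphs of order $n$, these evaluations yield the claimed inequalities.

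There is essentially no obstacle left at this point, as all the effort has been front-loaded into Lemma~\ref{G_r}. The only things requiring a moment's care are invoking $\aut(G)=\aut(\overline G)$ to transfer the determining numbers to the complements, and verifying that the two families $\{G_r\}_{r\ge 6}$ and $\{H_r\}_{r\ge 6}$ together realise both parities of $n$ for all $n\ge 14$.
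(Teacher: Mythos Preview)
Your proposal is correct and follows essentially the same approach as the paper: both instantiate the graphs $G_r,H_r,\overline{G}_r,\overline{H}_r$ from Lemma~\ref{G_r} at $r=\tfrac{n}{2}-1$ (even $n$) and $r=\tfrac{n-3}{2}$ (odd $n$), invoke $\aut(G)=\aut(\overline G)$ to transfer determining numbers to the complements, and read off the differences. Your write-up is slightly more explicit about the order computations and the arithmetic, but the argument is identical in substance.
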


\begin{proof} To prove that $(\dim-\dete)(n)\geq\lfloor\frac{n}{2}\rfloor -1$ we only need to show that, for every $n\geq 14$, there exists a graph $G$ of order $n$
such that $\dim(G)-\dete(G)=\lfloor\frac{n}{2}\rfloor-1$.
If $n$ is even then let $G=\overline{G}_{\frac{n}{2}-1}$. Indeed, $\overline{G}_{\frac{n}{2}-1}$ has order $n$ and  Lemma~\ref{G_r} yields  $\dim(\overline{G}_{\frac{n}{2}-1})-\dete(\overline{G}_{\frac{n}{2}-1})=\lfloor\frac{n}{2}\rfloor -1$ (note that  $\dete (G)=\dete (\overline{G})$ holds for any graph $G$ since $\aut(G)=\aut(\overline{G})$).
 Otherwise, $n$ is odd and we set $G=\overline{H}_{\frac{n-1}{2}-1}$, whose order is   $n$ and satisfies $\dim(\overline{H}_{\frac{n-1}{2}-1})-\dete(\overline{H}_{\frac{n-1}{2}-1})=\lfloor\frac{n}{2}\rfloor -1$, by Lemma~\ref{G_r}.

Considering the graphs $G_{\frac{n}{2}-1}$ (when $n$ is even) and $H_{\frac{n-1}{2}-1}$ (when $n$ is odd),
 one can use the same arguments as above to show that   $(\lambda-\dete)(n)\geq\lfloor\frac{n}{2}\rfloor$.
\end{proof}

In the remainder of the paper, we shall exhibit wide classes of graphs where the restrictions
of $(\dim-\dete)(n)$ and $(\lambda-\dete)(n)$ do not exceed $\frac{n}{2}$. Thus, there are reasons to believe that the
bounds given above are the exact values of these functions. We
state this as a conjecture.

\begin{conjecture} \label{centralconj} There exists a positive integer $n_0$ such that, for every $n\geq n_0$,
$$(\dim-\dete)(n)=\lfloor\frac{n}{2}\rfloor-1 \qquad and \qquad(\lambda-\dete)(n)=\lfloor\frac{n}{2}\rfloor.$$
\end{conjecture}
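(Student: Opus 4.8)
Theorem~\ref{lowerbound} already provides the two lower bounds, so the whole problem is to match them from above, and the plan is to route both upper bounds through the single quantity $\lambda_{|_{\mathcal C^*}}(n)$ introduced in Section~\ref{sec:twin-free}. Since that section establishes $(\dim-\dete)(n)\le\lambda_{|_{\mathcal C^*}}(n)$ and $(\lambda-\dete)(n)\le\lambda_{|_{\mathcal C^*}}(n)$, the first step is to prove the single structural inequality
\begin{equation*}
\lambda_{|_{\mathcal C^*}}(n)\le\Big\lfloor\frac{n}{2}\Big\rfloor ,
\end{equation*}
that is, every connected twin-free graph of order $n$ has a locating-dominating set of at most $\lfloor n/2\rfloor$ vertices. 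Granting this, the lower bound $(\lambda-\dete)(n)\ge\lfloor n/2\rfloor$ forces $(\lambda-\dete)(n)=\lfloor n/2\rfloor$ immediately, and $n_0$ may be taken as the maximum of $14$ and the threshold produced by the twin-free argument. The twin-free graphs $G_r$ of Section~\ref{sec:lowerbound}, of even order $n=2r+2$ with $\lambda(G_r)=r+1=\frac{n}{2}$, show that this bound would be best possible.

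For the $\dim$ equality one further unit has to be shaved off, and the mechanism is already visible in Theorem~\ref{lowerbound}: the construction for $\dim-\dete$ uses the complement $\overline{G}_r$ with $\dim(\overline{G}_r)=\frac{n}{2}-1$, whereas the construction for $\lambda-\dete$ uses $G_r$ with $\lambda(G_r)=\frac{n}{2}$. The reason is that for a diameter-two complement a resolving set of $\overline{G}_r$ is exactly a distinguishing set of $G_r$, so Lemma~\ref{G_r} in fact exhibits a distinguishing set of $G_r$ that is one vertex smaller than its locating-dominating sets, in agreement with Remark~\ref{sep-loc-dom}. On the upper-bound side the plan is to mirror this by refining the twin-free reduction so that it uses distinguishing sets rather than locating-dominating sets, aiming at the sharper estimate $(\dim-\dete)(n)\le\lambda_{|_{\mathcal C^*}}(n)-1$; combined with the inequality above this gives $(\dim-\dete)(n)\le\lfloor n/2\rfloor-1$ and hence, with Theorem~\ref{lowerbound}, the second equality. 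Showing that this extra unit is always available --- equivalently, that a minimum distinguishing set of an extremal twin-free graph is strictly smaller than its locating-domination number --- is the delicate point of the $\dim$ case.

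The real difficulty is concentrated in the inequality $\lambda_{|_{\mathcal C^*}}(n)\le\lfloor n/2\rfloor$, and this is exactly where the present paper stops short of a full proof: the Ore-type argument of Section~\ref{sec:ore} (through the Erd\H{o}s--Szekeres bound) and the greedy algorithm of Section~\ref{sec:greedy} only yield weaker estimates, while the tight value $\lfloor n/2\rfloor$ is confirmed merely for $C_4$-free graphs (Section~\ref{sec:C4}) and for trees (Section~\ref{sec:trees}). To attack the general case I would try to extend the matching-number and $k$-domination relations that drive the $C_4$-free analysis, the aim being to build a locating-dominating set while discarding vertices in pairs so that each discarded pair costs at most one vertex, which would force the bound $\lfloor n/2\rfloor$. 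The hard part is that dense neighbourhoods --- many copies of $C_4=K_{2,2}$ --- destroy the clean local correspondence between domination and distinguishing on which the $C_4$-free case relies, so controlling them seems to require a genuinely global extremal argument, presumably a structural characterization of the twin-free graphs that are tight for $\lambda$ (with $G_r$ as the prototype) rather than the vertex-by-vertex bounds currently available.
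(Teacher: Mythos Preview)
The statement you are addressing is Conjecture~\ref{centralconj}, and the paper does \emph{not} prove it; it is left explicitly open (see Section~\ref{sec:concluding}). So there is no ``paper's own proof'' to compare against. What you have written is not a proof either, and to your credit you say so: your text is a strategy that reduces the conjecture to two unresolved steps and then sketches how one might try to attack them.

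Your reduction of the $(\lambda-\dete)$ equality to the inequality $\lambda_{|_{\mathcal C^*}}(n)\le\lfloor n/2\rfloor$ is exactly the paper's framework (Theorem~\ref{lambda-deten} plus Theorem~\ref{lowerbound}), and the paper itself records this inequality as Conjecture~\ref{twin_free_conj}. So your first ``step'' is simply a restatement of an open problem; nothing is gained beyond what the paper already isolates, and your proposed line of attack (extending the matching and $k$-domination bounds beyond the $C_4$-free setting) is reasonable but speculative.

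For the $(\dim-\dete)$ equality your plan has a second genuine gap. You propose to sharpen Theorem~\ref{lambda-deten} to $(\dim-\dete)(n)\le\lambda_{|_{\mathcal C^*}}(n)-1$ by ``refining the twin-free reduction so that it uses distinguishing sets rather than locating-dominating sets''. But the reduction in Lemma~\ref{lambdatilde} is about locating-dominating sets of $G$, not resolving sets: it shows $\lambda(G)-\dete(G)\le\lambda(\widetilde G)$, and the passage to $\dim$ is only via the crude inequality $\dim(G)\le\lambda(G)$. To save the extra unit you would need something like $\dim(G)-\dete(G)\le\lambda(\widetilde G)-1$ for every $G$, and there is no mechanism in the paper for transporting \emph{resolving} sets (which depend on arbitrary distances) through the twin quotient. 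Your heuristic that ``a resolving set of $\overline{G}_r$ is exactly a distinguishing set of $G_r$'' relies on $\overline{G}_r$ having diameter~$2$, a feature of that particular family that does not survive in general. The paper is careful on this point: it says Conjecture~\ref{twin_free_conj} would imply ``most of'' Conjecture~\ref{centralconj}, precisely because the missing unit in the $\dim$ case is not covered by the $\lambda_{|_{\mathcal C^*}}$ route.

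In short: your outline matches the paper's own programme, correctly identifies the two obstacles, but does not remove either of them; the statement remains a conjecture.
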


\section{An upper bound on $(\dim-\dete)(n)$ and $(\lambda-\dete)(n)$}\label{sec:twin-free}

In this section, we assemble  all the necessary machinery in order to prove
  that  $(\dim-\dete)(n)$ and $(\lambda-\dete)(n)$ are bounded above by the function $\lambda_{|_{\mathcal C^*}}(n)$
  which is the maximum value of $\lambda(G)$ over all twin-free graphs $G$ of order $n$ (see Theorem~\ref{lambda-deten}). 
 The proof requires basically the following two ideas: 
 the construction of a twin-free graph $\widetilde{G}$ from an arbitrary graph $G$ based on the so called twin graph described in Subsection~\ref{subsec:twingraph}, and the relationship between $\lambda(G)$ and $\lambda(\widetilde G)$ given in Subsection~\ref{subsec:tilde}.

\subsection{The twin graph $G^*$}\label{subsec:twingraph}

For a  graph $G$, the twin graph $G^*$ is obtained from $G$ by identifying vertices with the same neighborhood.
This construction and any of its variations (depending on the choice of closed and/or open neighborhoods)
completely characterize the original graph, which is the reason why they
have been considered for solving many problems
in graph theory  (see for instance \cite{hatami,heggernes,extremal,roberts}).
In this subsection, we provide some properties of  $G^*$ and a bound on $\dete (G)$ in terms of $|V(G^*)|$,
each of which is useful in the paper. The twin graph is formally defined as follows.

 Two different vertices $u,v\in V(G)$ are {\it twins} if $N(u)=N(v)$ or $N[u]=N[v]$, 
  i.e., no vertex of $V(G)\setminus\{u,v\}$
distinguishes $\{u,v\}$.
 It is proved in \cite{extremal} that this definition induces an equivalence
  relation   on $V(G)$ given by $u\equiv v$ if and only if either $u=v$ or $u$ and $v$
   are twins. Thus, let $u^*=\{v\in V(G): u\equiv v\}$
   and consider the partition $u_1^*,...,u_r^*$ of $V(G)$ induced by this relation, where every $u_i$ is a representative of $u_i^*$.
 The  {\em twin graph} of $G$, written as $G^*$, has vertex set $V(G^*)=\{u_1^*,...,u_r^*\}$ and edge set $E(G^*)=\{\{u_i^*,u_j^*\}:\{u_i,u_j\}\in E(G)\}$.
Note that, for every $x\in V(G)$, we shall consider $x^*$ as a class in $V(G)$, as well as a vertex of $G^*$.
 The twin graph $G^*$ has the following properties.

\begin{lemma}{\rm\cite{extremal}}\label{lemmaextremal} For every graph $G$, the following statements hold:
\begin{enumerate}
\item \label{edges} The   graph $G^*$ is independent of the choice of the representatives $u_i$, i.e., 
 $$\{u_i^*,u_j^*\}\in E(G^*) \qquad\Longleftrightarrow \qquad\{x,y\}\in E(G) \quad \forall x\in u_i^*, y\in u_j^*$$

\item \label{completeindependent} Every class $u_i^*$ either induces a complete subgraph or is an independent set in $G$.
\end{enumerate}
\end{lemma}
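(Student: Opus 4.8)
The plan is to prove the two items separately, relying only on the fact recalled above that $\equiv$ is an equivalence relation, so that each class $u_i^*$ consists of pairwise twins. Two preliminary observations will be used throughout. First, two twins $u,v$ satisfy $N(u)=N(v)$ exactly when $u,v$ are non-adjacent, and $N[u]=N[v]$ exactly when they are adjacent; this follows directly from the ``no vertex of $V(G)\setminus\{u,v\}$ distinguishes $\{u,v\}$'' formulation together with the absence of loops. Second, these two cases are mutually exclusive: if $N(u)=N(v)$ and $N[u]=N[v]$, then $N(u)\cup\{u\}=N(u)\cup\{v\}$, and since $u,v\notin N(u)$ this forces $u=v$.

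For item~\ref{edges}, the implication ``$\Leftarrow$'' is immediate on taking $x=u_i$ and $y=u_j$. For ``$\Rightarrow$'', assume $\{u_i,u_j\}\in E(G)$ with $i\neq j$ and fix arbitrary $x\in u_i^*$, $y\in u_j^*$. If $x\neq u_i$, then $x$ and $u_i$ are twins while $u_j\notin u_i^*$, so $u_j$ does not distinguish $\{x,u_i\}$ and is therefore adjacent to $x$ iff it is adjacent to $u_i$; hence $\{x,u_j\}\in E(G)$ (and this is trivial when $x=u_i$). Applying the same argument to the twin pair $\{y,u_j\}$ and the vertex $x\notin u_j^*$ yields $\{x,y\}\in E(G)$. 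This proves the biconditional and, in particular, shows that $E(G^*)$ does not depend on the chosen representatives.

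For item~\ref{completeindependent}, fix a class $C=u_i^*$; we may assume $|C|\geq 2$ and pick distinct $u,v\in C$. The crux is to show that all pairs of vertices of $C$ have the same ``flavour'': precisely, if some pair of $C$ satisfies the open condition, then every $w\in C$ satisfies $N(w)=N(u)$, and symmetrically for the closed condition. So suppose $N(u)=N(v)$ and let $w\in C$. Since $w\equiv u$, either $N(w)=N(u)$ (and we are done) or $N[w]=N[u]$; in the latter case $w$ is adjacent to $u$, hence $w\in N(u)=N(v)$, so $w$ is adjacent to $v$, and therefore $N[w]=N[v]$ (the alternative $N(w)=N(v)$ would make $w,v$ non-adjacent). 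Then $N[u]=N[w]=N[v]$ together with $N(u)=N(v)$ forces $u=v$ by the cancellation above, a contradiction. The closed case $N[u]=N[v]$ is handled by the mirror argument. Consequently, either all vertices of $C$ share a common open neighbourhood, so $C$ is independent, or all pairs of $C$ are adjacent, so $C$ induces a complete subgraph.

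The only genuine obstacle is this flavour-consistency step, and its entire content is the loop-free cancellation $N(u)\cup\{u\}=N(u)\cup\{v\}\Rightarrow u=v$, which is exactly what prevents open twins and closed twins from coexisting inside a single class.
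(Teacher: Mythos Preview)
The paper does not supply its own proof of this lemma; it is quoted from \cite{extremal} and used without argument. So there is nothing to compare against, and the relevant question is simply whether your proof is correct. It is.

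For item~1, the two-step substitution (first replace $u_i$ by $x$ using that $u_j\notin u_i^*$ does not distinguish the twin pair $\{u_i,x\}$, then replace $u_j$ by $y$ using that $x\notin u_j^*$) is exactly the right move and is carried out cleanly.

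For item~2, you correctly identify the only non-trivial point: a single class cannot contain both an ``open'' twin pair and a ``closed'' twin pair. Your reduction to the cancellation $N(u)\cup\{u\}=N(u)\cup\{v\}\Rightarrow u=v$ (valid since graphs are loop-free) is the standard and cleanest way to see this. One small remark for polish: when you write ``let $w\in C$'' and then invoke $w\equiv u$, the case $w=u$ is trivial, and the case $w=v$ already gives $N(w)=N(v)=N(u)$; only $w\notin\{u,v\}$ actually needs the dichotomy argument. This is implicit in what you wrote but could be said explicitly.
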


  A vertex $u_i^*\in V(G^*)$ is of {\em type $(1)$} if $|u_i^*|=1$; otherwise  $u_i^*$ is of {\em type} $(KN)$.
  According to Statement~\ref{completeindependent} of Lemma~\ref{lemmaextremal}, a vertex $u_i^*$ of type
  $(KN)$ is of {\em type} $(K)$ or $(N)$, depending on weather $u_i^*$ induces a complete subgraph or
  is an independent set in $G$. Note that $N[x]=N[y]$ for every $x,y\in u_i^*$ whenever $u_i^*$ is of type $(K)$, and
  $N(x)=N(y)$ for every $x,y\in u_i^*$ whenever $u_i^*$ is of type $(N)$.
 For more properties  of $G^*$ we refer the reader to \cite{extremal}.

Now, we give two lemmas  considering the twin graph $G^*$ which will be helpful in the remainder of the paper.

\begin{lemma}\label{class1} For every graph $G$,
 no two different vertices $u_i^*,u_j^*\in V(G^*)$ of type $(1)$ are twins in $G^*$.
\end{lemma}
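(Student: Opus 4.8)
The plan is to argue by contradiction. Suppose $u_i^*$ and $u_j^*$ are two distinct vertices of $G^*$, both of type $(1)$, that are twins in $G^*$; the goal is to deduce that the representatives $u_i$ and $u_j$ are twins in $G$, which forces $u_i^*=u_j^*$ and so contradicts $u_i^*\neq u_j^*$.

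The first step is to express $N_G(u_i)$ and $N_G[u_i]$ in terms of $G^*$ when $u_i^*$ is of type $(1)$. Here $u_i^*=\{u_i\}$ is a singleton and hence induces no edge, so $u_i$ has no neighbor inside $u_i^*$; moreover, if $u_i$ is adjacent to some vertex of another class $z^*$, then it is adjacent to \emph{every} vertex of $z^*$, since any two vertices of $z^*$ are twins and so are not distinguished by the vertex $u_i\notin z^*$. Combining these observations with Statement~\ref{edges} of Lemma~\ref{lemmaextremal}, the neighborhood of $u_i$ in $G$ is exactly the union of the classes adjacent to $u_i^*$ in $G^*$; that is, $N_G(u_i)=\bigcup_{z^*\in N_{G^*}(u_i^*)} z^*$, and therefore $N_G[u_i]=\{u_i\}\cup N_G(u_i)=\bigcup_{z^*\in N_{G^*}[u_i^*]} z^*$. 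The same identities hold for $u_j$ with $u_j^*$ in place of $u_i^*$.

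Next I would split into the two possible ways in which $u_i^*$ and $u_j^*$ can be twins in $G^*$. If $N_{G^*}(u_i^*)=N_{G^*}(u_j^*)$, then the first identity above (applied to $u_i$ and to $u_j$) gives $N_G(u_i)=N_G(u_j)$. If instead $N_{G^*}[u_i^*]=N_{G^*}[u_j^*]$, then the second identity gives $N_G[u_i]=N_G[u_j]$. In either case $u_i$ and $u_j$ satisfy the definition of twins in $G$, hence $u_i\equiv u_j$ and $u_i^*=u_j^*$, the desired contradiction.

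I do not expect a genuine obstacle. The only point requiring a little care is the neighborhood decomposition for type-$(1)$ vertices, and in particular the use of the fact that a type-$(1)$ class is a singleton, so that no stray term coming from $u_i^*$ itself appears in $N_G(u_i)$ or $N_G[u_i]$. This is exactly where the hypothesis \emph{of type $(1)$} is needed: for vertices of type $(KN)$ the statement fails, as already shown by $C_4$, whose twin graph is $K_2$ with its two (type $(N)$) vertices being twins of each other.
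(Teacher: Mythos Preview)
Your proof is correct. It is essentially the contrapositive of the paper's argument: the paper starts from a single vertex $u$ distinguishing $u_i,u_j$ in $G$ (which exists because $u_i^*\neq u_j^*$), observes that $u\notin u_i^*\cup u_j^*$ since both classes are singletons, and concludes that $u^*$ distinguishes $u_i^*,u_j^*$ in $G^*$; you instead pull the full neighborhood structure back from $G^*$ to $G$ via the decomposition $N_G(u_i)=\bigcup_{z^*\in N_{G^*}(u_i^*)}z^*$. Both arguments rest on the same two ingredients (Statement~\ref{edges} of Lemma~\ref{lemmaextremal} and the type-$(1)$ hypothesis), so there is no substantive difference, though the paper's version is a bit more economical.
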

\begin{proof}
Since $u_i^*\neq u_j^*$ there exists a vertex $u\in V(G)\setminus\{u_i,u_j\}$ distinguishing $\{u_i,u_j\}$. Without loss of generality,
assume that  $u_i\in N_G(u)$ and  $u_j\not\in N_G(u)$. Also, observe that
$u$ is not contained in $u_i^*$ or $u_j^*$
 since
they are of type $(1)$. Therefore, $u_i^*\in N_{G^*}(u^*)$ and  $u_j^*\not\in N_{G^*}(u^*)$, which implies that $u_i^*$ and $u_j^*$ are not twins in $G^*$.
\end{proof}

Let $\Omega_G=\bigcup_{1\leq i\leq r}u_i^*\setminus\{u_i\}$ which is composed by all
but one vertex of every class of type $(KN)$. Clearly, this set has cardinality $n-r$ and satisfies
 that no two vertices of $V(G)\setminus \Omega_G$ are twins in $G$. Observe
 that  $\Omega_G$ is also independent of the choice of the representatives $u_i$.
   Using this set, we can prove the following result.

\begin{lemma}\label{detn-r}
Let $G$ be a graph of order $n$ such that $G^*$ has order $r$. Then,
   $$\dete (G)\geq n-r.$$
   In particular,  $\lambda(G)-\dete(G)\leq r-1$.
\end{lemma}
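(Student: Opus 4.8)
The plan is to establish the inequality $\dete(G)\ge n-r$ by showing that \emph{every} determining set of $G$ must contain at least $|u_i^*|-1$ vertices from each twin class $u_i^*$, and then to sum these local inequalities over the $r$ classes, recalling that $\Omega_G=\bigcup_{i}u_i^*\setminus\{u_i\}$ has cardinality exactly $n-r$.

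First I would fix an arbitrary determining set $S$ of $G$ together with a class $u_i^*\in V(G^*)$ of type $(KN)$, and argue by contradiction that $|S\cap u_i^*|\ge |u_i^*|-1$. If this failed, there would be two distinct vertices $x,y\in u_i^*\setminus S$. Consider the transposition $\phi\colon V(G)\to V(G)$ that swaps $x$ and $y$ and fixes every other vertex. The key step is to verify that $\phi\in\aut(G)$. Since $x$ and $y$ lie in the same twin class, no vertex of $V(G)\setminus\{x,y\}$ distinguishes $\{x,y\}$, i.e. $u\in N(x)\Leftrightarrow u\in N(y)$ for every $u\notin\{x,y\}$; hence $\phi$ preserves every edge having exactly one endpoint in $\{x,y\}$. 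Moreover, by Lemma~\ref{lemmaextremal}(\ref{completeindependent}) the class $u_i^*$ is either a clique or an independent set of $G$, so $\{x,y\}$ is an edge or a non-edge and is trivially preserved by $\phi$; edges disjoint from $\{x,y\}$ are untouched. Therefore $\phi$ is a nontrivial automorphism of $G$ that fixes every vertex of $S$ (because $x,y\notin S$), contradicting $\stab(S)=\{id_G\}$. This proves the claim for each class of type $(KN)$, and it holds vacuously for classes of type $(1)$.

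Summing the bounds over $i=1,\dots,r$ then gives $|S|\ge\sum_{i=1}^{r}(|u_i^*|-1)=n-r$, and since $S$ was an arbitrary determining set we conclude $\dete(G)\ge n-r$. For the ``in particular'' part, I would invoke \eqref{original}, which gives $\lambda(G)\le\lambda(n)=n-1$ (concretely, $V(G)\setminus\{v\}$ is a locating-dominating set of the connected graph $G$ for any vertex $v$); combining this with $\dete(G)\ge n-r$ yields $\lambda(G)-\dete(G)\le(n-1)-(n-r)=r-1$.

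I do not expect any serious difficulty here: the whole argument hinges on the single observation that swapping two members of a twin class is an automorphism, and the only place where care is needed is separating adjacencies \emph{inside} the class from adjacencies to the rest of $G$ — which is precisely what the clique/independent dichotomy of Lemma~\ref{lemmaextremal} provides.
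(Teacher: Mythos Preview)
Your proof is correct and follows essentially the same approach as the paper: both argue that the transposition of any two vertices in a common twin class is a nontrivial automorphism, forcing every determining set to contain all but one vertex of each class of type $(KN)$, and then combine $\dete(G)\ge n-r$ with $\lambda(G)\le n-1$. If anything, your version is slightly more careful, since you explicitly verify that the transposition is an automorphism using the clique/independent dichotomy of Lemma~\ref{lemmaextremal}, whereas the paper simply asserts this.
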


\begin{proof}
Let $u_i^*$ be a class of type $(KN)$ in $V(G)$. For each $x,y\in u_i^*$, let $f\in\aut (G)$
 fixing every vertex  of $G$ but $x$ and $y$, which are interchanged.
 Obviously, $f\in\stab (V(G)\setminus\{x,y\})$ and $f\neq id_G$.
Hence, every determining set $S$ of $G$ contains either $x$ or $y$.
It follows that $S$ contains all but one vertex of each
 class of type (KN), i.e., $|\Omega_G|=n-r$ vertices. Therefore, $\dete (G)\geq n-r$ and combining this with $\lambda (G)\leq n-1$ yields
  $\lambda(G)-\dete(G)\leq r-1$.
\end{proof}

\subsection{Using locating-dominating sets of twin-free graphs}\label{subsec:tilde}

In this subsection, we provide an upper bound on the functions $(\dim-\dete)(n)$ and $(\lambda-\dete)(n)$ based
on the locating-domination number of twin-free graphs.
 These graphs are important for their own sake \cite{auger,augercharon,charon1,sumner}, and also for their many
applications to other problems in graph theory  \cite{foucaudsylvain,hatami,kotlov}. Here, we
 construct a twin-free graph $\widetilde G$ for every graph $G$ (whenever $G^*\not\cong K_2$) in such a way that we can obtain
 locating-dominating sets of $G$ from those of $\widetilde G$.
 This construction and the relation between $\lambda (G)$ and $\lambda (\widetilde G)$ given in Lemma~\ref{lambdatilde} below are the
 key tools to prove  the above-mentioned   bound on $(\dim-\dete)(n)$ and
 $(\lambda-\dete)(n)$ (see Theorem~\ref{lambda-deten}).

A graph $G$ is {\it twin-free} if it does not contain twin vertices.
Observe that $G^*$ is not necessarily twin-free (see for instance Figures~\ref{tildeG}(a) and \ref{tildeG}(b)).
 However, we shall use this graph to associate a twin-free graph $\widetilde G$ to $G$.
 Indeed, let $\widetilde{G}$ be the graph obtained from $G^*$ by attaching a pendant vertex
to every $u_i^*\in V(G^*)$ of type $(KN)$ whenever $u_i^*$ has some twin in $G^*$ (see Figure~\ref{tildeG}(c)).
Thus, let us denote $V(\widetilde{G})=V^*\cup \mathcal{L}$, where $V^*=\{u_1^*,...,u_r^*\}$ and $\mathcal{L}$ is
the set of pendant vertices adjacent to vertices of $V^*$.
Note again that, for every $x\in V(G)$,  $x^*$ denotes  a class in $V(G)$, a vertex of $V(G^*)$, as well as
a vertex of $V^*\subseteq V(\widetilde G)$.
\\

\begin{figure}[ht]
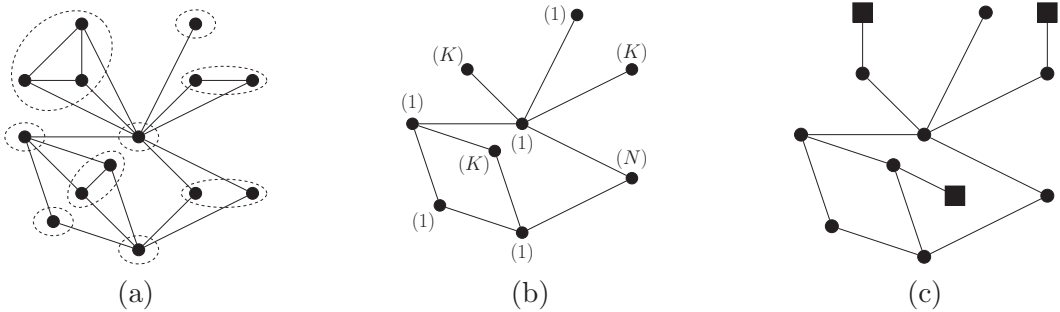

\begin{center}
\begin{tabular}{ccccccccc}
\includegraphics[width=35mm]{G}&&&&
\includegraphics[width=35mm]{twinG}&&&&
\includegraphics[width=35mm]{Gtilde}\\
(a)&&&&(b)&&&&(c)\\
\end{tabular}
\end{center}
\caption{(a) A graph $G$ and its classes in $G^*$, (b) the twin graph $G^*$ and (c) the associated graph $\widetilde G$.}\label{tildeG}
\end{figure}

We next provide two technical lemmas about $\widetilde G$ which
 are useful in the proof of Theorem~\ref{lambda-deten} and other proofs of this paper.

\begin{lemma}\label{Gtilde}
Let $G$ be a graph of order $n$ such that $G^*$ is not isomorphic to $K_2$. Then, the graph $\widetilde{G}$ has order $\widetilde{n}\leq n$ and is twin-free.
\end{lemma}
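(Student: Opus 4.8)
The plan is to prove the two claims---that $\widetilde n \le n$ and that $\widetilde G$ is twin-free---separately, since they rely on somewhat different observations about the construction.

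For the order bound, I would first count $|V(\widetilde G)| = r + |\mathcal L|$, where $r = |V(G^*)|$ and $|\mathcal L|$ is the number of pendant vertices added, i.e., the number of vertices of $G^*$ that are simultaneously of type $(KN)$ and have a twin in $G^*$. Each such vertex $u_i^*$ arises from a class $u_i^*$ of size at least $2$ in $G$, so it contributes at least $2$ to $n$ but only $1$ to $r$; adding a single pendant vertex for it still keeps its total contribution to $\widetilde n$ at $2$, matching its contribution to $n$. Vertices of type $(1)$ contribute $1$ to both $n$ and $r$ and receive no pendant. Hence, summing over the classes, $\widetilde n = r + |\mathcal L| \le \sum_{i} |u_i^*| = n$. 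A minor technical point to dispatch is that the hypothesis $G^* \not\cong K_2$ guarantees the construction $\widetilde G$ is well-defined as described (the problematic case being $G^* \cong K_2$, where both vertices are twins of each other); this does not affect the counting.

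For twin-freeness, I would argue that no pair of vertices of $\widetilde G$ can be twins, examining the possible types of the two vertices. If both lie in $\mathcal L$: two pendant vertices attached to distinct support vertices are clearly not twins, since each is adjacent only to its own support vertex. If one lies in $\mathcal L$ and one in $V^*$: a pendant vertex has degree $1$, while a support vertex of type $(KN)$ has degree at least $2$ (it has the pendant plus whatever neighbors it inherited from $G^*$---and here I would need to check it has at least one such neighbor, which follows because a class of type $(KN)$ that has a twin in $G^*$ cannot be isolated, again using $G^* \not\cong K_2$ to rule out degenerate situations), and vertices of type $(1)$ in $V^*$ receive no pendant so cannot be confused with an element of $\mathcal L$ adjacent to a different vertex. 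If both lie in $V^*$: here I distinguish by type. Two type-$(1)$ vertices are not twins in $G^*$ by Lemma~\ref{class1}, and adding pendants elsewhere cannot create a twin relation between them in $\widetilde G$. If at least one of the two, say $u_i^*$, is of type $(KN)$: if $u_i^*$ has a twin in $G^*$, then it carries a pendant vertex in $\widetilde G$ that the other vertex $u_j^*$ does not see (unless $u_j^*$ also carries its own pendant, but then $u_i^*$ sees that one and $u_j^*$ does not, breaking the twin relation either way); if $u_i^*$ has no twin in $G^*$, then some vertex of $G^*$ distinguishes $u_i^*$ from $u_j^*$, and since that distinguishing vertex's neighborhood relation to $u_i^*$ and $u_j^*$ is unchanged by the pendant attachments, it still distinguishes them in $\widetilde G$.

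The main obstacle I anticipate is the bookkeeping in the last case, specifically handling the interaction between the two notions of ``twin'' ($N(u) = N(v)$ versus $N[u] = N[v]$) and the pendant attachments: one has to verify that attaching pendants to $V^*$ vertices never accidentally turns two non-twins into open-neighborhood twins or closed-neighborhood twins, and conversely that a vertex already carrying a pendant genuinely cannot be twinned with anything. Careful case analysis on whether each vertex under consideration carries a pendant, combined with the fact that pendant vertices have degree exactly $1$, should resolve this cleanly; the hypothesis $G^* \not\cong K_2$ is exactly what is needed to avoid the one degenerate configuration where these arguments would fail.
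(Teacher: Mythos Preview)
Your order-bound argument and your handling of the cases ``both in $\mathcal L$'' and ``both in $V^*$'' are essentially correct and parallel the paper's approach. (The paper's treatment of ``both in $V^*$'' is slightly slicker: if $u,v\in V^*$ are twins in $\widetilde G$ one checks directly that they are already twins in $G^*$, whence Lemma~\ref{class1} forces one of them to be of type $(KN)$ and hence to carry a pendant in $\widetilde G$ that distinguishes them---a contradiction.)

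The genuine gap is in the mixed case $u\in V^*$, $v\in\mathcal L$. Your degree argument only disposes of the possibility that $u$ is the support vertex $u_j^*$ to which $v$ is attached (this is the closed-neighbourhood subcase $N_{\widetilde G}[u]=N_{\widetilde G}[v]$). You do not treat the open-neighbourhood subcase $N_{\widetilde G}(u)=N_{\widetilde G}(v)=\{u_j^*\}$, in which $u=u_i^*$ is a degree-$1$ vertex of $\widetilde G$ (hence of $G^*$, carrying no pendant) whose unique neighbour is $u_j^*$. Your sentence that type-$(1)$ vertices ``receive no pendant so cannot be confused with an element of $\mathcal L$ adjacent to a different vertex'' is not an argument: a type-$(1)$ leaf of $G^*$ attached to $u_j^*$ would have exactly the same open neighbourhood as $v$, and you give no reason why this configuration cannot occur. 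The paper rules it out by exploiting the very reason $u_j^*$ received a pendant: $u_j^*$ has a twin $u_k^*$ in $G^*$, and since $u_i^*\in N_{G^*}(u_j^*)$ one obtains $u_k^*\in N_{G^*}(u_i^*)=\{u_j^*\}$ unless $u_k^*=u_i^*$; either alternative forces $G^*\cong K_2$. This use of the twin $u_k^*$ is exactly where the hypothesis $G^*\not\cong K_2$ is needed, and it is the idea your outline is missing---you anticipated the difficulty in the wrong case.
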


\begin{proof}

When obtaining  $G^*$ from $G$, we "lose" at least one vertex for each class of type $(KN)$ since they contain at least two vertices.
Thus, every time we attach a pendant vertex to a vertex of type $(KN)$ for constructing $\widetilde{G}$ from $G^*$,
we do not exceed the order of $G$.
Hence, $\widetilde n\leq n$.

Now, we prove that $\widetilde{G}$ is twin-free. On the contrary, suppose that $\widetilde{G}$ has a pair of twins, say $u,v\in V(\widetilde{G})$. 
If $u,v\in V^*$ then it is easy to see that they are also twins in $G^*$.
 Hence, at least one of them is of type $(KN)$, by Lemma~\ref{class1},
 and so they are distinguished in $\widetilde{G}$ by the corresponding pendant vertex  of $\mathcal{L}$; a contradiction.
Moreover, no two pendant vertices of $\mathcal{L}$ are twins since they are
 adjacent to different vertices of $V^*$. Therefore, we can
  assume that $u\in V^*$ and $v\in\mathcal{L}$.

Let $u=u_i^*$  and let $u_j^*$ be such that $N_{\widetilde G}(v)=\{u_j^*\}$ for some $1\leq i,j\leq r$.
Since $u$ and $v$ are twins, we can assume that $N_{\widetilde G}(u)=N_{\widetilde G}(v)=\{u_j^*\}$ 
(otherwise $N_{\widetilde G}[u]=N_{\widetilde G}[v]=\{v,u_j^*\}$
and so  $\widetilde G\cong K_2$ since $\widetilde G$ must be connected, which implies
that $G^*\cong K_1$; a contradiction since $G^*\cong K_1$ implies that $\widetilde G\cong K_1$).
 Hence, $u_i^*\neq u_j^*$ and,  
  by construction of $\widetilde G$, $u_j^*$ must have a twin in $G^*$, say $u_k^*$. Clearly, $u_k^*\neq u_i^*$ (otherwise
  $u_i^*$ and $u_j^*$ are twins in $G^*$ and so $G^*\cong K_2$ since $N_{\widetilde G}(u_i^*)=N_{G^*}(u_i^*)=\{u_j^*\}$; a contradiction).
  Thus, $u_k^*\in N_{G^*}(u_i^*)$ since $u_j^*$ and $u_k^*$ are twins
   and we know that  $u_j^*\in N_{G^*}(u_i^*)$ but $N_{\widetilde G}(u^*_i)=N_{G^*}(u_i^*)=\{u_j^*\}$, which is a contradiction.
\end{proof}

The following lemma establishes a relationship  between $\lambda(G)$ and $\lambda(\widetilde G)$, and is a key result for proving Theorem~\ref{lambda-deten}.

\begin{lemma}\label{lambdatilde}
Let $G$ be a graph of order $n$ such that $G^*$ has order $r$. Then,
$$\lambda (G)\leq \lambda (\widetilde{G}) + n-r.$$
 In particular, $\lambda(G)-\dete(G)\leq\lambda(\widetilde G)$.
\end{lemma}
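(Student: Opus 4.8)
The plan is to build a locating-dominating set of $G$ from one of $\widetilde{G}$ by adding, for each class of type $(KN)$, all but one of its vertices. So fix a minimum locating-dominating set $\widetilde{D}$ of $\widetilde{G}$ with $|\widetilde{D}|=\lambda(\widetilde{G})$, and set $D = (\widetilde{D}\cap V^*)\cup \Omega_G$, where we identify each $u_i^*\in V^*$ with its representative $u_i\in V(G)$, and $\Omega_G=\bigcup_i u_i^*\setminus\{u_i\}$ has cardinality $n-r$. The pendant vertices of $\mathcal{L}$ that lie in $\widetilde{D}$ are replaced by their unique neighbour in $V^*$ (or simply discarded if that neighbour is already in $D$); this does not increase the size, so $|D|\leq |\widetilde{D}\cap V^*| + |\mathcal{L}\cap\widetilde{D}| + (n-r) \leq \lambda(\widetilde{G}) + n-r$. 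The claim is that $D$ is a locating-dominating set of $G$, which gives the displayed inequality; the ``in particular'' then follows by combining $\lambda(G)\leq\lambda(\widetilde{G})+n-r$ with $\dete(G)\geq n-r$ from Lemma~\ref{detn-r}.

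First I would check domination. Every vertex of $\Omega_G$ lies in a class of type $(KN)$; if that class is of type $(K)$ it induces a complete subgraph, so any such vertex not in $D$ is dominated by the representative $u_i\in D$; if it is of type $(N)$, then $x$ shares its open neighbourhood with $u_i$, and $u_i$ is adjacent to some dominator (or is itself in $D$) — one needs that $\widetilde{D}$ dominates $u_i^*$ in $\widetilde{G}$, via either a $V^*$-vertex (which sits in $D$) or the pendant in $\mathcal{L}$ (whose replacement is $u_i^*$ itself, so $u_i\in D$). For a vertex $x\in V(G)\setminus(\Omega_G\cup D)$, the class $x^*$ is of type $(1)$, so $x=u_i^*$ for some $i$ with $u_i^*\notin\widetilde{D}$; since $\widetilde{D}$ dominates $u_i^*$ in $\widetilde{G}$, there is a neighbour in $\widetilde{D}$, again either in $V^*$ (hence its representative is in $D$ and adjacent to $x$ in $G$ by Lemma~\ref{lemmaextremal}) or the pendant, whose replacement $u_i^*$ equals $x$ — impossible since then $x\in D$. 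So $D$ dominates $G$.

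Next, the distinguishing property. Take distinct $x,y\in V(G)\setminus D$. If $x,y$ lie in the same class $u_i^*$, they are distinguished by any of the other $|u_i^*|-1\geq 1$ vertices of that class, all of which are in $\Omega_G\subseteq D$ — here I use that $x^*=y^*$ of type $(KN)$ means the class has at least two vertices besides one of $x,y$ only if $|u_i^*|\geq 3$; when $|u_i^*|=2$ exactly one of $x,y$ is in $\Omega_G$, contradicting $x,y\notin D$, so this case forces $|u_i^*|\geq 3$ and a distinguisher exists. If $x,y$ lie in different classes, then $x^*\neq y^*$ as vertices of $V(G^*)\subseteq V(\widetilde{G})$, and neither $x^*$ nor $y^*$ is in $\widetilde{D}$ (if, say, $x^*\in\widetilde{D}$ then its representative $u_i$ is the candidate for membership in $D$; one has to be slightly careful because $x$ need not be that representative — but then $x\in\Omega_G\subseteq D$, contradiction, so indeed $x=u_i$ and $x^*\notin\widetilde{D}$ likewise). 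Hence $\widetilde{D}$ distinguishes $\{x^*,y^*\}$ in $\widetilde{G}$ by some $w\in\widetilde{D}$; if $w\in V^*$, its representative in $D$ distinguishes $\{x,y\}$ in $G$ by Lemma~\ref{lemmaextremal}(\ref{edges}); if $w\in\mathcal{L}$ is the pendant at some $u_k^*$, then $u_k^*$ is adjacent in $\widetilde{G}$ to exactly one of $x^*,y^*$, so again $u_k$ (which is in $D$ as the replacement of $w$, or already there) is adjacent in $G$ to exactly one of $x,y$.

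\textbf{Main obstacle.} The delicate bookkeeping is the identification of $V^*$-vertices with their representatives and the handling of $\mathcal{L}$: I expect the real work is verifying that ``$x\notin D$'' genuinely forces $x$ to be the chosen representative $u_i$ of its class whenever $x^*\in\widetilde{D}$, and that discarding/replacing pendant vertices never destroys domination or a needed distinguisher. The cardinality estimate and the application of Lemma~\ref{detn-r} for the ``in particular'' are then immediate.
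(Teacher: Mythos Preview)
Your overall strategy coincides with the paper's: project a minimum locating-dominating set of $\widetilde G$ down to $G$ via the map sending each $u_i^*\in V^*$ to its representative $u_i$ and each pendant of $\mathcal L$ to the representative of its unique neighbour, then adjoin $\Omega_G$. The cardinality count and the final appeal to Lemma~\ref{detn-r} are correct.

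Two local steps in your distinguishing argument are wrong, though both occur in cases that are in fact vacuous. When $x^*=y^*$, you claim the pair is ``distinguished by any of the other $|u_i^*|-1$ vertices of that class'': this is false, since twins are by definition \emph{not} distinguished by any vertex outside $\{x,y\}$. The correct observation is that at most one element of each class (its representative) lies outside $\Omega_G\subseteq D$, so distinct $x,y\notin D$ can never satisfy $x^*=y^*$; the case is empty. When the distinguisher $w\in\widetilde D$ lies in $\mathcal L$ with $N_{\widetilde G}(w)=\{u_k^*\}$, you conclude that $u_k^*$ is adjacent in $\widetilde G$ to exactly one of $x^*,y^*$ and hence that $u_k$ distinguishes $\{x,y\}$ in $G$. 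But $w$ having a unique neighbour means that $w$ distinguishing $\{x^*,y^*\}$ forces one of them to \emph{equal} $u_k^*$; it says nothing about the adjacencies of $u_k^*$. Here again the case is impossible: if, say, $x^*=u_k^*$, then the whole class $u_k^*$ lies in $D$ (since $u_k\in D$ as the replacement of $w$, and $u_k^*\setminus\{u_k\}\subseteq\Omega_G$), contradicting $x\notin D$. With these two subcases recognised as empty, your argument goes through and matches the paper's proof.
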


\begin{proof}

Let $S\subseteq V(\widetilde{G})$ be a minimum locating-dominating set of $\widetilde{G}$.
 Observe that, for every $u\in V(\widetilde{G})$, there is a unique $u_i\in V(G)$ with $1\leq i\leq r$ such that either
 $u=u_i^*$ or $N_{\widetilde{G}}(u)=\{u_i^*\}$ (depending on whether $u\in V^*$ or $u\in\mathcal{L}$).
 Thus, let $\pi(u)$ be such  $u_i$ and $\pi(S)=\{\pi(u):u\in S\}$.
Clearly, the set $\pi(S)$ satisfies $|\pi(S)|\leq|S|=\lambda(\widetilde{G})$ because $\pi(u)=\pi(v)$ whenever $u,v\in S$
 with $u\in V^*$, $v\in \mathcal{L}$ and $N_{\widetilde{G}}(v)=\{u\}$.
 Therefore,  we will obtain the expected bound by proving that
$ S'=\pi(S)\cup\Omega_G$ is a locating-dominating set of $G$ since $|\Omega_G|=n-r$.

First, observe that $\pi (u)=u_i$ implies that $u_i^*\subseteq S'$ whenever $u\in S$.
 We claim that $S'$ is a distinguishing set of $G$. Indeed,
given $x,y\in V(G)\setminus S'$, we shall prove that $\{x,y\}$ is distinguished by some vertex of $S'$.
Obviously, we can assume that $x^*\neq y^*$ in $G^*$
(otherwise either $x$ or $y$ belongs to $\Omega_G\subseteq S'$; a contradiction since $x,y\in V(G)\setminus S'$). Since $S$ is a locating-dominating set
of $\widetilde{G}$, then there is $u\in S$ distinguishing $\{x^*,y^*\}$ in $\widetilde{G}$, and we can suppose that $u\neq x^*\neq y^*$  (otherwise either $u=x^*\subseteq S'$ or $u=y^*\subseteq S'$; a contradiction).

  If $u\in \mathcal{L}$ then, without loss of generality, let us assume that $x^*\in N_{\widetilde G}(u)$ and $y^*\not\in N_{\widetilde G}(u)$. Thus,
  $\pi(u)=u_i$ with $u_i^*=x^*$ and  so $x^*\subseteq S'$; a contradiction with $x\in V(G)\setminus S'$. Hence, $u\in V^*$ and so
  $u=u_i^*$ for some $1\leq i\leq r$, which leads to $\pi (u)=u_i\in S'$. Assuming that $x^*\in N_{\widetilde G}(u)$ and
  $y^*\not\in N_{\widetilde{G}}(u)$ (the opposite case is similar), we have that $x\in N_G(u_i)$ and $y\not\in N_G(u_i)$,
  by Statement~\ref{edges} of Lemma~\ref{lemmaextremal},  which implies that $u_i$ distinguishes $\{x,y\}$.
Therefore, $S'$ is a distinguishing set of $G$ and a similar analysis  shows that it is also a dominating set. 

We have just proved that $\lambda (G)\leq\lambda(\widetilde G)+n-r$, which combined with $\dete (G)\geq n-r$ (see Lemma~\ref{detn-r})
yields $\lambda(G)-\dete(G)\leq\lambda(\widetilde G)$, as claimed.
\end{proof}

 For any class of graphs $\mathcal{C}$, we define $(\dim-\dete)_{|_{\mathcal{C}}}(n)$, $(\lambda-\dete)_{|_{\mathcal{C}}} (n)$ and
 $\lambda_{|_{\mathcal{C}}} (n)$ as in Section~\ref{sec:intro} but restricting their domains to the graphs of $\mathcal{C}$.
Let $\mathcal{C}^*$ be the class of twin-free graphs. Thus, the function $\lambda_{|_{\mathcal C^*}}(n)$
can be considered for every $n\geq 4$ since   $P_4$ is clearly the smallest twin-free graph.

We now reach the main result of this section which improves significatively Expression~(\ref{original}).

\begin{theorem}\label{lambda-deten}
 For every $n\geq 4$,
$$(\dim-\dete) (n) \leq(\lambda-\dete) (n) \leq\lambda_{|_{\mathcal C^*}}(n).$$
\end{theorem}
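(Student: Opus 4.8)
The first inequality $(\dim-\dete)(n)\le(\lambda-\dete)(n)$ is already contained in Expression~(\ref{original}): every locating-dominating set of a graph is a resolving set, so $\dim(G)\le\lambda(G)$ and hence $\dim(G)-\dete(G)\le\lambda(G)-\dete(G)$ for every $G$; taking the maximum over all graphs of order $n$ gives the claim. So the real content is the bound $(\lambda-\dete)(n)\le\lambda_{|_{\mathcal C^*}}(n)$, and for this it suffices to prove that $\lambda(G)-\dete(G)\le\lambda_{|_{\mathcal C^*}}(n)$ for an arbitrary graph $G$ of order $n\ge 4$, and then take the maximum over such $G$.

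Fix $G$ and set $r=|V(G^*)|$. I would split into two cases. If $r\le 2$, then Lemma~\ref{detn-r} gives directly $\lambda(G)-\dete(G)\le r-1\le 1$, which is at most $\lambda_{|_{\mathcal C^*}}(n)$ since for $n\ge 4$ the class of twin-free graphs on $n$ vertices is non-empty (for instance $P_n$) and any connected graph on at least two vertices has locating-domination number at least $1$. If $r\ge 3$, then $G^*\not\cong K_2$, so Lemma~\ref{Gtilde} produces the twin-free graph $\widetilde G$ of order $\widetilde n\le n$ and Lemma~\ref{lambdatilde} yields $\lambda(G)-\dete(G)\le\lambda(\widetilde G)$. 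Moreover $\widetilde G$ is connected ($G^*$ is connected and $\widetilde G$ is obtained from it by attaching pendant vertices) and twin-free, while $\widetilde n\ge r\ge 3$; since a connected twin-free graph never has exactly two or three vertices, it follows that $\widetilde n\ge 4$. Hence $\lambda(\widetilde G)\le\lambda_{|_{\mathcal C^*}}(\widetilde n)$, and it only remains to pass from $\widetilde n$ to $n$.

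For this I would prove that $\lambda_{|_{\mathcal C^*}}$ is non-decreasing on $\{m:m\ge 4\}$ by a padding argument. Starting from a connected twin-free graph $H$ on $m$ vertices with $\lambda(H)=\lambda_{|_{\mathcal C^*}}(m)$, choose a vertex $v$ of $H$ having no pendant neighbour — such a vertex always exists, because if $H$ has a pendant vertex then its unique neighbour has degree at least $2$ (as $H\not\cong K_2$), so the pendant vertex itself qualifies, and if $H$ has no pendant vertex then any vertex qualifies — and attach a new pendant vertex at $v$, obtaining $H^+$ on $m+1$ vertices. The choice of $v$ guarantees that $H^+$ is still connected and twin-free. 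A short case analysis, according to whether the new pendant vertex belongs to a minimum locating-dominating set $D^+$ of $H^+$ (replacing it by $v$ in the second case), shows that one extracts from $D^+$ a locating-dominating set of $H$ of size at most $|D^+|$, so $\lambda(H^+)\ge\lambda(H)$. Iterating gives $\lambda_{|_{\mathcal C^*}}(\widetilde n)\le\lambda_{|_{\mathcal C^*}}(n)$, which completes the proof.

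The one delicate point — and the reason the theorem is not a one-line corollary of Lemmas~\ref{detn-r}, \ref{Gtilde} and \ref{lambdatilde} — is this last monotonicity step: the lemmas only bound $\lambda(G)-\dete(G)$ by the locating-domination number of a possibly \emph{smaller} twin-free graph $\widetilde G$, so without the padding construction (together with the verifications that it preserves twin-freeness and does not decrease the locating-domination number) one cannot close the gap when $\widetilde n<n$. Everything else is a straightforward assembly of the three lemmas above.
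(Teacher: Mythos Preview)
Your proof is correct and follows essentially the same approach as the paper: reduce to the twin-free graph $\widetilde G$ via Lemmas~\ref{Gtilde} and~\ref{lambdatilde}, then establish monotonicity of $\lambda_{|_{\mathcal C^*}}$ by the same pendant-attachment construction (the paper's Claims~\ref{add_pendant} and~\ref{monotone}). Your treatment of the degenerate case $r\le 2$ is in fact slightly cleaner than the paper's, which rules out $G^*\cong K_2$ for the extremal $G$ by appealing to Theorem~\ref{lowerbound} (stated only for $n\ge 14$); your direct bound $\lambda(G)-\dete(G)\le r-1\le 1\le\lambda_{|_{\mathcal C^*}}(n)$, together with the observation that $\widetilde n\ge 4$, works uniformly for all $n\ge 4$.
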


\begin{proof}
Since the first inequality is obvious, we only need to show that $(\lambda-\dete) (n) \leq\lambda_{|_{\mathcal C^*}}(n)$.
We begin by proving the following two claims.

\begin{claim}\label{add_pendant} For a graph $G$, let $H$ be the graph obtained from $G$ by attaching a pendant vertex $u$ to a vertex $v\in V(G)$.
Then, $\lambda (G)\leq \lambda (H)$.
\end{claim}
\begin{proof}
Let $S\subseteq V(H)$ be a minimum locating-dominating set of $H$. Clearly, if $u\notin S$ then $S\subseteq V(G)$ is also a locating-dominating set
of $G$, and so $\lambda (G)\leq \lambda (H)$. Otherwise $u\in S$ and it is easy to check that $S'=(S\setminus \{u\})\cup \{v\}$ is a locating-dominating set
of $G$. 
 Therefore, $\lambda (G)\leq |S'|\leq\lambda (H)$.
\end{proof}

\begin{claim}\label{monotone} $\lambda_{|_{\mathcal C^*}}(n)\leq \lambda_{|_{\mathcal C^*}}(n+1).$
\end{claim}

\begin{proof}
Consider a twin-free graph $G$ of order $n$ such that $\lambda (G)=\lambda_{|_{\mathcal C^*}}(n)$. To prove the claim,
it suffices to find a twin-free graph $H$ of order $n+1$ such that  $\lambda (H)\geq \lambda (G)$.
 Indeed, let $H$ be the graph  obtained from $G$ by attaching a pendant vertex $u$ to a vertex $v\in V(G)$ such that
 no neighbor of $v$ has degree 1 in $G$. Note that this is possible since $G$ is not the disjoint union of copies of $K_1$ or $K_2$,
which is neither connected nor twin-free. Hence, $H$ has order $n+1$ and is clearly twin-free. Moreover,
 Claim~\ref{add_pendant} ensures that $\lambda (H)\geq \lambda (G)$, as required.
\end{proof}

Now, we are able to prove the theorem. Thus, let $G$ be a graph of order $n$ such that $\lambda(G)-\dete(G)=(\lambda-\dete)(n)$.
 Lemma~\ref{lambdatilde} yields
\begin{equation}\label{eq1}
(\lambda-\dete)(n)=\lambda(G)-\dete(G)\leq \lambda (\widetilde{G}).
\end{equation} 
 On the other hand,
 if $G^*\cong K_2$ then, by Lemma~\ref{detn-r}, we have $(\lambda-\dete)(n)=\lambda(G)-\dete(G)\leq 1<\frac{n}{2}$;
a contradiction with Theorem~\ref{lowerbound}.
Hence, $G^*\not\cong K_2$ and so
 Lemma~\ref{Gtilde} says that $\widetilde G$ is twin-free and $\widetilde n=|V(\widetilde G)|\leq n$.
 Thus, we get
\begin{equation}\label{eq2}
\lambda (\widetilde G)\leq \lambda_{|_{\mathcal C^*}}(\widetilde n)\leq \lambda_{|_{\mathcal C^*}}(n),
\end{equation}
 the last inequality being a consequence of Claim~\ref{monotone}. Therefore, combining Expressions (\ref{eq1}) and
 (\ref{eq2}) gives the expected inequality.
\end{proof}

The preceding theorem implies that bounding the function $\lambda_{|_{\mathcal C^*}}(n)$ yields bounds on
$(\dim-\dete) (n)$ and $(\lambda-\dete) (n)$. Thus, we will be mainly concerned with the locating-domination number
 of twin-free graphs in the following two sections.

 Theorems \ref{lowerbound} and \ref{lambda-deten} give $\lambda_{|_{\mathcal C^*}}(n)\geq\lfloor\frac{n}{2}\rfloor$
 and, throughout this paper, we shall find  numerous conditions for a twin-free graph to satisfy $\lambda(G)\leq \frac{n}{2}$.
Thus, we believe that   the
following conjecture, which implies most of Conjecture~\ref{centralconj}, is true.

\begin{conjecture}\label{twin_free_conj}
There exists a positive integer $n_1$ such that, for every $n\geq n_1$,
$$\lambda_{|_{\mathcal C^*}}(n)=\lfloor\frac{n}{2}\rfloor.$$
\end{conjecture}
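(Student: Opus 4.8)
\emph{Towards a proof.} The lower bound $\lambda_{|_{\mathcal C^*}}(n)\ge\lfloor n/2\rfloor$ is already in hand, being a consequence of Theorems~\ref{lowerbound} and~\ref{lambda-deten} together with the monotonicity of Claim~\ref{monotone}; so the whole conjecture reduces to the upper bound: for all $n$ above some threshold $n_1$, \emph{every} (connected) twin-free graph $G$ of order $n$ has a locating-dominating set of size at most $\lfloor n/2\rfloor$. The plan is to recast this as a covering statement. Writing $U=V(G)\setminus D$, the set $D$ is locating-dominating precisely when every $u\in U$ has a neighbour in $D$ and, for all distinct $u,u'\in U$, the set $\operatorname{dist}(u,u'):=\bigl(N(u)\mathbin{\triangle}N(u')\bigr)\setminus\{u,u'\}$ meets $D$. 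A short case check shows that in a twin-free graph $\operatorname{dist}(u,u')$ is always nonempty (the two configurations in which it could be empty force $u,u'$ to be open or closed twins). Hence the aim is to produce $U\subseteq V(G)$ with $|U|\ge\lceil n/2\rceil$ whose complement is a transversal of the set system $\{N(u):u\in U\}\cup\{\operatorname{dist}(u,u'):u\ne u'\in U\}$.

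I would attack this by induction on $n$, peeling off a bounded gadget at each step so that $2t$ vertices are deleted while the locating-dominating set grows by at most $t$ (which preserves $\lambda\le n/2$). The gadgets to look for are pendant paths, short ears, and low-degree vertices; the role of twin-freeness is precisely that it forbids the ``bad'' local configurations (many leaves at one vertex, two vertices with identical neighbourhoods, etc.) that drive $\lambda$ towards $n-1$, and it should let the separation constraints created at the cut be discharged with one new code vertex per two deleted. A convenient equivalent packaging is a weighting/discharging argument: give every vertex weight $\tfrac12$ and redistribute so that each member of a judiciously chosen candidate set $D$ collects enough weight to pay for itself and for one vertex of $U$ that it dominates and helps separate. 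Here one would lean on the relations between $\lambda(G)$, the matching number and the $k$-domination number developed in Section~\ref{sec:C4}: a near-perfect matching whose edges can be oriented so that the chosen endpoints dominate the other endpoints is essentially what yields a solution of size $n/2$, and the explicit bounds of Sections~\ref{sec:ore} and~\ref{sec:greedy} are exactly the coarser estimates to be sharpened along this route. For the base of the induction one settles a finite list of small twin-free graphs directly and promotes everything upward via Claims~\ref{add_pendant} and~\ref{monotone}.

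The principal obstacle is the \emph{global} nature of the separation requirement: a single vertex of $D$ may be the only vertex separating a huge number of pairs, so twin-freeness --- which guarantees a separator for every individual pair but never a cheap shared one --- does not by itself bound $|D|$. Making the peeling (or the discharging) go through therefore seems to demand a genuine structural decomposition theorem for twin-free graphs: a guarantee that every large twin-free graph contains a substantial substructure that is simultaneously easy to dominate and ``self-locating''. Two further points deserve care. First, pinning down the exact threshold $n_1$ and the finitely many sporadic twin-free graphs whose locating-domination number exceeds $\lfloor n/2\rfloor$. Second, the connectedness convention of the paper is essential rather than cosmetic here: attaching an isolated vertex to a twin-free graph keeps it twin-free, raises $\lambda$ by $1$ and $n$ by only $1$, so the inductive peeling must be carried out within the connected world and cannot tolerate splitting off trivial components.
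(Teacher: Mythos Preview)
The statement you are attempting to prove is a \emph{conjecture} in the paper, not a theorem: the authors explicitly leave it open and devote Sections~\ref{sec:ore}--\ref{sec:C4} to accumulating partial evidence (Corollaries~\ref{Gamma}, \ref{alphaomega}, \ref{chi}, Theorem~\ref{23nbound}, Proposition~\ref{lambdamatching}) without ever claiming the full bound $\lambda(G)\le\lfloor n/2\rfloor$ for arbitrary twin-free $G$. So there is no ``paper's own proof'' to compare against.

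Your write-up is honest about this: it is titled \emph{Towards a proof} and it names its own principal gap. That gap is genuine. The induction/peeling scheme you sketch requires, at each step, a local gadget whose removal keeps the graph twin-free and connected while gaining at most $t$ code vertices for $2t$ deleted vertices; but you give no argument that such a gadget always exists, and you correctly observe that the separation constraint is global, so a purely local peeling need not discharge it. The ``structural decomposition theorem for twin-free graphs'' that you say \emph{seems to be demanded} is exactly the missing ingredient, and you do not supply it. Likewise, the matching heuristic you invoke is precisely what the paper already exploits in Proposition~\ref{lambdamatching} --- but only under the extra hypothesis $G\in\mathcal{C}_4$, where the absence of $C_4$ is what makes one endpoint per matching edge suffice for separation; without that hypothesis the argument breaks, and you offer no replacement.

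In short: your lower-bound paragraph is correct and matches the paper; your upper-bound discussion is a reasonable research outline but not a proof, and the paper itself does not claim one.
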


\section{From minimal dominating sets to locating-dominating sets}\label{sec:ore}

In this section, we present a variant of a theorem by Ore~\cite{ore} in domination theory which leads us to a  bound
 on $\lambda_{|_{\mathcal C^*}}(n)$ (see Corollary~\ref{erdos_coro}) by means of a classical result due to Erd\H{o}s and Szekeres~\cite{szekeres}. 
 Further, this variant allows us to relate the locating-domination number of a twin-free graph to classical graph parameters: upper domination number,
independence number, clique number and chromatic number. All these relations produce a number of sufficient conditions
for a twin-free graph $G$ to verify $\lambda(G)\leq \frac{n}{2}$, i.e.,
 they support Conjecture~\ref{twin_free_conj} in numerous cases (see Corollaries~\ref{Gamma}, \ref{alphaomega} and \ref{chi}).

A set $S\subseteq V(G)$ is a {\em minimal  dominating set}  if no proper subset of $S$ is a  dominating set of $G$
({\em minimal locating-dominating sets} are defined analogously). The following theorem  due to Ore~\cite{ore}
is one of the first results in the field of domination, which is an area that has played a central role in graph theory for the last fifty years.
 We refer the reader to \cite{fod} for an extensive bibliography on domination related concepts.

\begin{theorem}\label{complementminimal}
{\rm \cite{ore}} Let $G$ be a graph without isolated vertices and let $D\subseteq V(G)$ be a minimal dominating set of $G$.
Then, $V(G)\setminus D$ is a dominating set of $G$. Consequently, $\gamma(G)\leq\frac{n}{2}$.
\end{theorem}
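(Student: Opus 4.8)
The plan is to prove the two statements of Ore's theorem in the order they appear. First I would establish that $V(G)\setminus D$ is a dominating set, and then derive the bound $\gamma(G)\le \frac{n}{2}$ as an easy consequence.

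For the first part, let $D$ be a minimal dominating set of $G$ and take any vertex $v\in D$. I want to find a neighbour of $v$ lying in $V(G)\setminus D$. Since $D$ is \emph{minimal}, the set $D\setminus\{v\}$ is not dominating, so there is some vertex $w$ that is not dominated by $D\setminus\{v\}$, i.e.\ $w\notin D\setminus\{v\}$ and $N(w)\cap(D\setminus\{v\})=\emptyset$. Now I would consider two cases. If $w=v$, then $N(v)\cap(D\setminus\{v\})=\emptyset$; but $v$ has a neighbour (as $G$ has no isolated vertices) and that neighbour cannot lie in $D$, so it lies in $V(G)\setminus D$, giving $v$ a neighbour outside $D$. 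If $w\neq v$, then $w\notin D$ (since $w\notin D\setminus\{v\}$ and $w\neq v$), and because $D$ dominates $w$ while $D\setminus\{v\}$ does not, the only possibility is $v\in N(w)$, so again $v$ has the neighbour $w\in V(G)\setminus D$. In either case every $v\in D$ has a neighbour in $V(G)\setminus D$, which is precisely the statement that $V(G)\setminus D$ dominates $D$; since $V(G)\setminus D$ trivially dominates itself, it is a dominating set of $G$.

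For the second part, recall that $\gamma(G)$ is the minimum size of a dominating set. Every graph without isolated vertices has a minimal dominating set $D$ (take any minimum dominating set, which is automatically minimal). By the first part, both $D$ and $V(G)\setminus D$ are dominating sets, and they partition $V(G)$, so $\min\{|D|,\,|V(G)\setminus D|\}\le \frac{n}{2}$. Hence $\gamma(G)\le\frac{n}{2}$.

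I do not anticipate a genuine obstacle here, as this is a classical argument; the only point requiring a little care is the case analysis on the vertex $w$ witnessing that $D\setminus\{v\}$ fails to dominate, and in particular remembering to invoke the no-isolated-vertices hypothesis in the subcase $w=v$. One should also note that the theorem is only used in this paper as a template: the actual work in this section will be adapting the argument to \emph{locating-dominating} sets in twin-free graphs, where the analogue of ``$D\setminus\{v\}$ is not dominating'' must be replaced by a statement that also accounts for the distinguishing condition, and where twin-freeness is what rescues the argument.
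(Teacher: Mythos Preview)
Your argument is correct and is the standard proof of Ore's result. Note, however, that the paper does not actually supply its own proof of this theorem: it is stated with a citation to \cite{ore} and used as a black box. So there is nothing in the paper to compare your proof against directly.

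That said, your case analysis on the witness $w$ is precisely the characterization the paper later quotes from \cite{ore} inside the proof of Theorem~\ref{LD_ore-type}: a set $D$ is a minimal dominating set if and only if every $x\in D$ satisfies either $N(x)\subseteq V(G)\setminus D$ (your case $w=v$) or $N(u)\cap D=\{x\}$ for some $u\in V(G)\setminus D$ (your case $w\neq v$). In that sense your write-up meshes perfectly with how the paper goes on to use the result, and your closing remark about how the argument is adapted to locating-dominating sets in twin-free graphs is accurate.
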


Observe that an analogue of this last result but for locating-dominating sets of twin-free graphs would prove in the affirmative Conjecture~\ref{twin_free_conj}.
 Unfortunately, the complement of a minimal locating-dominating set of a twin-free graph is not necessarily a locating-dominating set,
as shown in Figure~\ref{counterexample}.
However, we next provide a similar relation between minimal dominating sets and locating-dominating sets
which improves Theorem~\ref{complementminimal} in the twin-free case.

\begin{figure}[ht]
\begin{center}
\includegraphics[width=40mm]{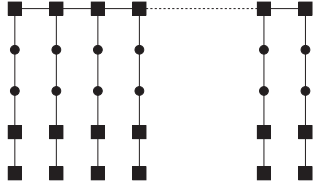}
\caption{A twin-free graph with a minimal locating-dominating set (depicted as square vertices) whose complement is not a locating-dominating set.}\label{counterexample}
\end{center}
\end{figure}

\begin{theorem}\label{LD_ore-type}
Let $G$ be a twin-free graph and let $D\subseteq V(G)$ be a minimal dominating set of $G$. Then, $V(G)\setminus D$ is a locating-dominating set of $G$.
\end{theorem}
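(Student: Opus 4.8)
The plan is to show that $D' = V(G)\setminus D$ is simultaneously a dominating set and a distinguishing set of $G$; the domination part is immediate from Ore's theorem (Theorem~\ref{complementminimal}), since a twin-free connected graph has no isolated vertices and $D$ is minimal dominating, so $D' = V(G)\setminus D$ dominates $G$. The real work is showing $D'$ distinguishes every pair. So let $x,y \in V(G)\setminus D' = D$ be two distinct vertices of $D$ (pairs with an endpoint in $D'$ are distinguished trivially), and suppose for contradiction that no vertex of $D'$ distinguishes $\{x,y\}$, i.e. $N(x)\cap D' = N(y)\cap D'$ and, if we are in the closed-neighborhood situation, the adjacency of $x,y$ is also "symmetric" relative to $D'$. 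I would first record the consequence: for every $w\in D'$, $w\in N(x) \iff w\in N(y)$.

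Next I would exploit minimality of $D$. Since $D$ is a minimal dominating set, for each vertex $d\in D$ there is a \emph{private neighbor}: either $d$ itself is not dominated by $D\setminus\{d\}$ (so $N[d]\cap D = \{d\}$, meaning $d$ has no neighbor in $D$ — an "external private neighbor" situation where $d$ is its own private neighbor only if it has all neighbors outside $D$), or there is a vertex $p\in N(d)$ with $N(p)\cap D = \{d\}$. Apply this to $d=x$ and $d=y$. The key point is that a private neighbor of $x$ relative to $D$ must lie in $D'$ (a vertex $p$ with $N(p)\cap D=\{x\}$ cannot be in $D$, else $p$ would be dominated only by $x$, but $p\in D$... one must check: if $p\in D$ then $p$ is adjacent to $x\in D$ and to nothing else in $D$; this is allowed, but then I switch to the case analysis below). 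Roughly: the private neighbor structure forces the existence of a vertex in $D'$ adjacent to exactly one of $x,y$, contradicting the previous paragraph — unless the private neighbors themselves live in $D$, which is the delicate case.

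The main obstacle, then, is handling the case where $x$ and $y$ have their private neighbors inside $D$ rather than in $D'$ — equivalently, where $x$ (or $y$) is its own private neighbor because all its neighbors lie in $D$. Here I would argue directly that $x$ and $y$ are twins, contradicting twin-freeness: if $N(x)\cap D' = N(y)\cap D'$ and additionally the private-neighbor analysis forces $N(x)\cap D = N(y)\cap D$ (up to the symmetric inclusion of $x,y$ themselves), then $N(x)$ and $N(y)$ agree everywhere, so either $N(x)=N(y)$ (open twins) or $N[x]=N[y]$ (closed twins, when $x,y$ are adjacent). Pinning down exactly why the $D$-sides must also agree is the crux: I expect it comes from combining (i) $x$ is dominated by $D$ only through itself or through a $D$-neighbor that is also forced to be a $D$-neighbor of $y$ by symmetry, and (ii) minimality of $D$ ruling out the remaining configurations. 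I would organize this as a short case split on whether each of $x,y$ has an external private neighbor in $D'$ or is self-private, derive a vertex distinguishing $\{x,y\}$ in the first case and twinhood in the last, and conclude.
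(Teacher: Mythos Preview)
Your approach is the paper's: Ore gives domination, and the private-neighbor characterization of minimal dominating sets plus twin-freeness gives distinguishing via exactly your final case split (an external private neighbor of $x$ in $D'$ distinguishes $\{x,y\}$; if both $x,y$ are self-private they are twins). One slip to fix: being self-private means $N[x]\cap D=\{x\}$, i.e.\ $N(x)\subseteq D'$, so all neighbors of $x$ lie \emph{outside} $D$, not inside as you wrote; with that corrected the ``delicate case'' is a one-liner ($N(x)=N(x)\cap D'=N(y)\cap D'=N(y)$), and your worry about a private neighbor $p\neq x$ lying in $D$ can be dropped since $N[p]\cap D=\{x\}$ forces $p\notin D$.
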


\begin{proof}
Let $D$ be a minimal dominating set of $G$. By Theorem~\ref{complementminimal}, we only need to prove that $V(G)\setminus D$ is a distinguishing set of $G$. Thus, we shall show that, for every
 $x,y\in D$, there is a vertex $u\in V(G)\setminus D$ distinguishing $\{x,y\}$.
Indeed, it is proved in \cite{ore} that  $D$ is a minimal dominating set if and only if each vertex $x\in D$ satisfies that either $N(x)\subseteq V(G)\setminus D$ or
$N(u)\cap D = \{x\}$ for some $u\in V (G) \setminus D$.
 Hence, if $N(x),N(y)\subseteq V(G)\setminus D$ then $\{x,y\}$ is distinguished by some $u\in V(G)\setminus D$ since $G$ is twin-free.
 Otherwise, we can assume  without loss of generality that $N(u)\cap D=\{x\}$ for some $u\in V(G)\setminus D$ and so $\{x,y\}$ is distinguished by $u$. Therefore, $V(G)\setminus D$ is a locating-dominating set of $G$, as claimed.
\end{proof}

Now, we show a series of consequences of this last result which relate $\lambda(G)$ to well-known
graph parameters when $G$ is twin-free.
The {\em upper domination number} $\Gamma (G)$ of a graph $G$ is the maximum
cardinality of a minimal dominating set of $G$.
 This is a heavily studied invariant which has been related to other well-known  parameters in the area of domination (see \cite{fod} for multiple examples).
  With the same spirit, the following consequence of Theorem~\ref{LD_ore-type} relates the upper domination number
to the locating-domination number of a twin-free graph,
and supports the validity of Conjecture~\ref{twin_free_conj}.

\begin{corollary}\label{Gamma}
Let $G$ be a twin-free graph. Then, $$\lambda(G)\leq n-\max{\{\Gamma(G),\Gamma(\overline{G})-1\}}.$$ In particular,
$\lambda (G)\leq \frac{n}{2}$ when either $\Gamma(G)\geq \frac{n}{2}$ or $\Gamma (\overline{G})\geq \frac{n}{2}+1$.
\end{corollary}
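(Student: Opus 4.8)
The plan is to prove the two inequalities $\lambda(G)\le n-\Gamma(G)$ and $\lambda(G)\le n-\Gamma(\overline G)+1$ separately; their conjunction is exactly $\lambda(G)\le n-\max\{\Gamma(G),\Gamma(\overline G)-1\}$, and the ``in particular'' clause then drops out by substituting $\Gamma(G)\ge n/2$ (so $\lambda(G)\le n-\Gamma(G)\le n/2$) or $\Gamma(\overline G)\ge n/2+1$ (so $\lambda(G)\le n-\Gamma(\overline G)+1\le n/2$).

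For the first inequality I would simply take a minimal dominating set $D$ of $G$ with $|D|=\Gamma(G)$. Since $G$ is twin-free, Theorem~\ref{LD_ore-type} tells us that $V(G)\setminus D$ is a locating-dominating set of $G$, hence $\lambda(G)\le|V(G)\setminus D|=n-\Gamma(G)$.

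For the second inequality, the first thing to record is that the distinguishing relation is invariant under complementation: a vertex $u$ distinguishes $\{x,y\}$ in $G$ if and only if it distinguishes $\{x,y\}$ in $\overline G$ (for $u\in\{x,y\}$ this is trivial, and for $u\notin\{x,y\}$ note that $u$ is adjacent in $G$ to exactly one of $x,y$ precisely when it is adjacent in $\overline G$ to exactly one of them). Consequently $D$ is a distinguishing set of $G$ iff it is a distinguishing set of $\overline G$, and in particular the twin relation of $G$ coincides with that of $\overline G$, so $\overline G$ is again twin-free. Now let $D$ be a minimal dominating set of $\overline G$ with $|D|=\Gamma(\overline G)$. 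Applying Theorem~\ref{LD_ore-type} to the twin-free graph $\overline G$, the set $V(\overline G)\setminus D$ is a locating-dominating set of $\overline G$, hence a distinguishing set of $\overline G$, hence a distinguishing set of $G$ by the observation above. By Remark~\ref{sep-loc-dom}, $\lambda(G)\le|V(G)\setminus D|+1=n-\Gamma(\overline G)+1$, completing the proof.

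I do not expect a genuine obstacle here, since Theorem~\ref{LD_ore-type} carries the weight; the only points needing attention are the complement-invariance of the distinguishing relation (this is what legitimizes applying Theorem~\ref{LD_ore-type} to $\overline G$ and what equates the twin relations of $G$ and $\overline G$) and the bookkeeping of the extra $+1$ supplied by Remark~\ref{sep-loc-dom} when passing from a distinguishing set of $\overline G$ to a locating-dominating set of $G$ — this is precisely the source of the $-1$ sitting next to $\Gamma(\overline G)$. If one wishes to be careful about $\overline G$ possibly having an isolated vertex, it suffices to observe that in the second argument we only use the conclusion that $V(\overline G)\setminus D$ is a \emph{distinguishing} set of $\overline G$, and the part of the proof of Theorem~\ref{LD_ore-type} that establishes this does not invoke the absence of isolated vertices.
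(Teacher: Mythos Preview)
Your argument is correct, and for the first inequality it coincides with the paper. For the second inequality the paper takes a slightly different route: it quotes an external Nordhaus--Gaddum-type result (Theorem~7 of \cite{nordhaus-gaddum}) asserting $|\lambda(G)-\lambda(\overline G)|\le 1$, and then applies the first inequality to the twin-free graph $\overline G$ to get $\lambda(G)\le \lambda(\overline G)+1\le n-\Gamma(\overline G)+1$. Your approach instead \emph{reproves} the needed direction $\lambda(G)\le\lambda(\overline G)+1$ from first principles, via the complement-invariance of the distinguishing relation together with Remark~\ref{sep-loc-dom}. This buys self-containment (no external citation) and also lets you sidestep any connectedness or isolated-vertex hypotheses on $\overline G$, since, as you correctly note, only the distinguishing part of Theorem~\ref{LD_ore-type} is needed. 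The paper's route is shorter but leans on an outside reference; the two are otherwise equivalent in strength.
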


\begin{proof}
We deduce from Theorem~\ref{LD_ore-type} that $\lambda(G)\leq n-\Gamma(G)$ for every twin-free graph $G$.
Also, Theorem 7 of \cite{nordhaus-gaddum}
shows that $|\lambda(G)-\lambda(\overline{G})|\leq 1$   and so $\lambda(G)\leq \lambda(\overline G)+1\leq n-\Gamma(\overline G)+1$
since $\overline G$ is also twin-free.
Therefore, $\lambda(G)\leq \min\{n-\Gamma(G),n-\Gamma(\overline G)+1\}$, which is
the expected bound.
\end{proof}

Recall that the {\em independence number} $\alpha(G)$ and  the {\em clique number} $\omega(G)$ are
the maximum cardinalities of  an independent set and a complete subgraph of $G$, respectively.
The following result relates these two classical parameters to $\lambda(G)$ when $G$ is twin-free,
 and gives another sufficient condition for $G$
 to have $\lambda(G)\leq \frac{n}{2}$.

\begin{corollary}\label{alphaomega}

Let $G$ be  a twin-free graph. Then, $$\lambda (G)\leq n -\max \{\alpha (G),\omega (G)-1\}.$$ In particular,
$\lambda (G)\leq \frac{n}{2}$ when either $\alpha (G)\geq \frac{n}{2}$ or $\omega (G)\geq \frac{n}{2}+1$.
\end{corollary}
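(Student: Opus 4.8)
The plan is to reduce Corollary~\ref{alphaomega} to Corollary~\ref{Gamma} (or directly to Theorem~\ref{LD_ore-type}) by exhibiting, for any twin-free graph $G$, a minimal dominating set of size at least $\alpha(G)$ and using the complement-twin-free trick for the $\omega(G)-1$ term. The key observation is the standard domination fact that every maximal independent set is a (minimal) dominating set: if $I$ is a maximal independent set of $G$, then $I$ dominates $G$ (otherwise some vertex outside $I$ has no neighbour in $I$ and could be added, contradicting maximality), and $I$ is a minimal dominating set because each $x\in I$ is its own private neighbour in the sense required by Ore's characterization ($N(x)\cap I=\emptyset\subseteq V(G)\setminus I$). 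Taking $I$ to be a maximum independent set gives a minimal dominating set $D=I$ with $|D|=\alpha(G)$, and Theorem~\ref{LD_ore-type} then yields that $V(G)\setminus D$ is a locating-dominating set, so $\lambda(G)\le n-\alpha(G)$.

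Next I would handle the $\omega(G)-1$ bound. Since a clique in $G$ is an independent set in $\overline{G}$, we have $\omega(G)=\alpha(\overline{G})$. The graph $\overline{G}$ is twin-free because $\aut$ and the twin relation are preserved under complementation (indeed $N_{\overline G}(u)=V(G)\setminus N[u]$, so $N_G(u)=N_G(v)$ iff $N_{\overline G}[u]=N_{\overline G}[v]$ and similarly with open/closed swapped, hence $G$ twin-free $\Leftrightarrow$ $\overline G$ twin-free). Applying the first part to $\overline{G}$ gives $\lambda(\overline{G})\le n-\alpha(\overline{G})=n-\omega(G)$. Then invoking $|\lambda(G)-\lambda(\overline{G})|\le 1$ (Theorem~7 of \cite{nordhaus-gaddum}, already used in the proof of Corollary~\ref{Gamma}) yields $\lambda(G)\le \lambda(\overline{G})+1\le n-\omega(G)+1=n-(\omega(G)-1)$. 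Combining the two estimates, $\lambda(G)\le n-\max\{\alpha(G),\omega(G)-1\}$.

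Finally, the ``in particular'' statement is immediate: if $\alpha(G)\ge n/2$ then $\lambda(G)\le n-\alpha(G)\le n/2$, and if $\omega(G)\ge n/2+1$ then $\lambda(G)\le n-(\omega(G)-1)\le n-n/2=n/2$.

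I expect the only subtle points to be the two lemmas-in-disguise that must be stated cleanly: (i) a maximum independent set is a minimal dominating set in a graph without isolated vertices — and one should note that a twin-free graph on $\ge 2$ vertices has no isolated vertices since two isolated vertices would be (open-neighbourhood) twins, and the single-vertex case is vacuous — so that Theorem~\ref{LD_ore-type} applies; and (ii) $\overline G$ is twin-free whenever $G$ is, which is where a one-line neighbourhood-complement computation is needed. Neither is a real obstacle; the bulk of the work is already carried by Theorem~\ref{LD_ore-type}, and this corollary is essentially a packaging of it together with the elementary identities $\omega(G)=\alpha(\overline G)$ and $|\lambda(G)-\lambda(\overline G)|\le 1$. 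One could alternatively phrase the whole argument as: a maximum independent set is a minimal dominating set, so $\Gamma(G)\ge\alpha(G)$ and $\Gamma(\overline G)\ge\alpha(\overline G)=\omega(G)$, whence Corollary~\ref{Gamma} gives exactly the claimed bound — this is the shortest route and is probably the one I would write.
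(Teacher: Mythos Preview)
Your proposal is correct and matches the paper's proof essentially verbatim: the paper also observes that a maximum independent set is a minimal dominating set (hence $\alpha(G)\le\Gamma(G)$ and $\omega(G)=\alpha(\overline G)\le\Gamma(\overline G)$, the latter since $\overline G$ is twin-free too), and then simply invokes Corollary~\ref{Gamma}. The ``shortest route'' you identify at the end is precisely what the paper writes.
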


\begin{proof}

Observe first that every independent set $I$ of order $\alpha(G)$ is a minimal
dominating set of $G$. Indeed, $I$ is a dominating set since every $x\in V(G)\setminus I$ has a neighbor in $I$ (otherwise
$I$ is not an independent set of maximum order) and it is minimal since $N(u)\subseteq V(G)\setminus I$ for every $u\in I$.
 Hence, $\alpha(G)\leq\Gamma(G)$, and so $\omega(G)=\alpha(\overline{G})\leq\Gamma(\overline G)$. Thus, combining these inequalities with
 Corollary~\ref{Gamma} leads us to the bound since $G$ is twin-free and so is $\overline G$.
\end{proof}

The {\em chromatic number} of $G$, denoted by $\chi(G)$, is the smallest number of classes needed to partition $V(G)$
 so that no two adjacent vertices belong to the same class. A classical result in graph theory establishes that $\chi (G)\leq  \frac{n+\omega(G)}{2}$ (see for instance \cite{chromaticGT}).  Applying this  to $G$ and $\overline{G}$, we can easily deduce from Corollary~\ref{alphaomega}
 the following bound on $\lambda(G)$ in terms of $\chi(G)$ and $\chi(\overline G)$ which in particular supports
  Conjecture~\ref{twin_free_conj}.

\begin{corollary}\label{chi}
Let $G$ be  a twin-free graph. Then, $$\lambda (G)\leq 2n -\max \{2\chi(G),2\chi(\overline G)-1\}.$$ Consequently,
 $\lambda (G)\leq \frac{n}{2}$ when either $\chi (G)\geq \frac{3}{4}n$ or $\chi (\overline{G})\geq \frac{3}{4}n+\frac{1}{2}$.
\end{corollary}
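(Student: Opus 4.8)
The plan is to derive Corollary~\ref{chi} as an essentially mechanical consequence of Corollary~\ref{alphaomega} together with the standard bound $\chi(G) \leq \frac{n + \omega(G)}{2}$ cited from \cite{chromaticGT}. First I would rearrange this classical inequality to isolate the clique number: it gives $\omega(G) \geq 2\chi(G) - n$. Applying the same inequality to the complement $\overline{G}$, whose order is also $n$, yields $\omega(\overline{G}) \geq 2\chi(\overline{G}) - n$. Since $\overline{G}$ is twin-free whenever $G$ is, both graphs are legitimate inputs to Corollary~\ref{alphaomega}.

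Next I would feed these two lower bounds into the bound of Corollary~\ref{alphaomega}, namely $\lambda(G) \leq n - \max\{\alpha(G), \omega(G) - 1\}$. Using $\omega(G) - 1 \geq 2\chi(G) - n - 1$ we get $\lambda(G) \leq n - (2\chi(G) - n - 1) = 2n - 2\chi(G) + 1$; this is one candidate upper bound, but it is slightly weaker than what the statement claims for the $\chi(G)$ branch. To obtain the stated $\lambda(G) \leq 2n - 2\chi(G)$, I would instead use the inequality $\alpha(G) \leq \Gamma(G)$ established inside the proof of Corollary~\ref{alphaomega} more carefully, or — more transparently — note that the proof of Corollary~\ref{alphaomega} actually chains through Corollary~\ref{Gamma}, which gives $\lambda(G) \leq n - \max\{\Gamma(G), \Gamma(\overline{G}) - 1\}$, and that $\omega(G) = \alpha(\overline{G}) \leq \Gamma(\overline{G})$. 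Hence $\lambda(G) \leq n - (\Gamma(\overline{G}) - 1) \leq n - (\omega(G) - 1) = n - \omega(G) + 1$, while from the $\Gamma(G)$ side $\lambda(G) \leq n - \Gamma(G) \leq n - \alpha(G) = n - \omega(\overline{G})$. Substituting $\omega(\overline{G}) \geq 2\chi(\overline{G}) - n$ into the first route gives the $2\chi(\overline{G})$ term (with the $-1$), and substituting $\omega(G) \geq 2\chi(G) - n$ into the second gives the clean $2\chi(G)$ term; taking the better of the two reproduces exactly $\lambda(G) \leq 2n - \max\{2\chi(G), 2\chi(\overline{G}) - 1\}$.

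For the ``in particular'' clause I would simply unwind when the right-hand side drops to $\frac{n}{2}$: the bound $2n - 2\chi(G) \leq \frac{n}{2}$ rearranges to $\chi(G) \geq \frac{3}{4}n$, and $2n - (2\chi(\overline{G}) - 1) \leq \frac{n}{2}$ rearranges to $\chi(\overline{G}) \geq \frac{3}{4}n + \frac{1}{2}$, matching the two stated thresholds.

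The main obstacle is purely bookkeeping: making sure that the asymmetric $-1$ corrections (which originate from the Nordhaus--Gaddum-type inequality $|\lambda(G) - \lambda(\overline{G})| \leq 1$ inside Corollary~\ref{Gamma}) are attached to the correct one of the two $\chi$-terms, and that one does not accidentally weaken $2\chi(G)$ to $2\chi(G) - 1$. I expect no genuine difficulty beyond carefully tracking which of $G$ and $\overline{G}$ each inequality is applied to; the twin-freeness of $\overline{G}$ (already noted in the excerpt) is the only structural fact needed, and everything else is substitution into results proved above.
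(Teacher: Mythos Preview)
Your approach is exactly the one the paper indicates: combine Corollary~\ref{alphaomega} with the classical bound $\chi(H)\le\frac{n+\omega(H)}{2}$ applied to $H=G$ and $H=\overline G$. However, your final substitution step has the labels crossed. Your ``first route'' ends in $\lambda(G)\le n-\omega(G)+1$, which contains $\omega(G)$, not $\omega(\overline G)$; so the inequality to plug in there is $\omega(G)\ge 2\chi(G)-n$, yielding $\lambda(G)\le 2n-2\chi(G)+1$, i.e.\ the term $2\chi(G)-1$. Likewise your ``second route'' ends in $\lambda(G)\le n-\omega(\overline G)$, and substituting $\omega(\overline G)\ge 2\chi(\overline G)-n$ gives the clean term $2\chi(\overline G)$. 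You have substituted each $\omega$-bound into the route containing the \emph{other} $\omega$, which is illegitimate.

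Carried out correctly, the argument therefore produces
\[
\lambda(G)\le 2n-\max\{\,2\chi(\overline G),\ 2\chi(G)-1\,\},
\]
with the roles of $G$ and $\overline G$ exchanged relative to the printed statement (and correspondingly the ``in particular'' thresholds swap). Since the paper derives the corollary precisely from Corollary~\ref{alphaomega} and the same classical inequality, this swap appears to be a typo in the statement rather than a defect in your method; but as written your proposal does not prove what it claims, because the mismatched substitutions are the only way it reaches the stated form. You even flagged exactly this bookkeeping as the main obstacle---and that is where the slip occurred.
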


Erd\H{o}s and Szekeres~\cite{szekeres} proved that every graph of order $n$ contains either a complete subgraph or an
 independent set of cardinality at least $\lceil \frac{\log_2n}{2}\rceil$. On account of this result and Corollary~\ref{alphaomega},
 we obtain our first upper bound on $\lambda_{|_{\mathcal C^*}}(n)$, and
 consequently on $(\dim -\dete)(n)$ and $(\lambda-\dete)(n)$ (by Theorem~\ref{lambda-deten}). Thus, the following corollary
  improves significatively
 the   bound $(\dim -\dete)(n)\leq n-2$ due to Cáceres et al.~\cite{cagapuse} (see Proposition~\ref{lowerCAGAPUSE} above).
\begin{corollary}\label{erdos_coro} For every $n\geq 4$,
 $$(\dim -\dete)(n)\leq(\lambda-\dete)(n)\leq  \lambda_{|_{\mathcal C^*}}(n) \leq n -\lceil\frac{\log_2n}{2}\rceil+1.$$
\end{corollary}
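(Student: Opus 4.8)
The plan is to chain together the results already established in the excerpt. By Theorem~\ref{lambda-deten}, we have $(\dim-\dete)(n)\leq(\lambda-\dete)(n)\leq\lambda_{|_{\mathcal C^*}}(n)$ for every $n\geq 4$, so the only thing left to prove is the final inequality $\lambda_{|_{\mathcal C^*}}(n)\leq n-\lceil\frac{\log_2 n}{2}\rceil+1$. To this end, I would take an arbitrary twin-free graph $G$ of order $n$ and bound $\lambda(G)$ from above.

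The main tool is Corollary~\ref{alphaomega}, which states that for a twin-free graph $G$ one has $\lambda(G)\leq n-\max\{\alpha(G),\omega(G)-1\}$. The Erd\H{o}s--Szekeres result quoted just before the corollary guarantees that \emph{every} graph of order $n$ (twin-free or not) contains either a complete subgraph or an independent set of cardinality at least $\lceil\frac{\log_2 n}{2}\rceil$; that is, $\max\{\alpha(G),\omega(G)\}\geq\lceil\frac{\log_2 n}{2}\rceil$. I would combine these: if the large structure is an independent set, then $\alpha(G)\geq\lceil\frac{\log_2 n}{2}\rceil$, giving $\lambda(G)\leq n-\lceil\frac{\log_2 n}{2}\rceil$; if instead it is a clique, then $\omega(G)\geq\lceil\frac{\log_2 n}{2}\rceil$, so $\omega(G)-1\geq\lceil\frac{\log_2 n}{2}\rceil-1$ and hence $\lambda(G)\leq n-\lceil\frac{\log_2 n}{2}\rceil+1$. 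In either case $\lambda(G)\leq n-\lceil\frac{\log_2 n}{2}\rceil+1$, and taking the maximum over all twin-free $G$ of order $n$ yields $\lambda_{|_{\mathcal C^*}}(n)\leq n-\lceil\frac{\log_2 n}{2}\rceil+1$.

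There is really no serious obstacle here; the corollary is a short deduction from results all proved earlier in the excerpt. The only points requiring a moment's care are: first, that the weaker ``$-1$'' term coming from the clique case is what forces the ``$+1$'' in the final bound (so one should not be tempted to claim the sharper $n-\lceil\frac{\log_2 n}{2}\rceil$); and second, that one should double-check the range of $n$ — Corollary~\ref{alphaomega} and Theorem~\ref{lambda-deten} both hold for $n\geq 4$, which matches the statement, and the Erd\H{o}s--Szekeres bound is vacuous-but-true for small $n$. Assembling these observations gives the corollary immediately, and the concluding remark is simply that this already improves the $n-2$ bound of Proposition~\ref{lowerCAGAPUSE} since $\lceil\frac{\log_2 n}{2}\rceil\geq 2$ for $n\geq 4$... wait, $\lceil\frac{\log_2 4}{2}\rceil=\lceil 1\rceil=1$, so the improvement is genuine only once $\log_2 n\geq 6$, i.e. $n\geq 64$; I would phrase the closing sentence accordingly or simply note that the bound is asymptotically far stronger.
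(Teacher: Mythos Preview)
Your argument is correct and matches the paper's own reasoning exactly: the corollary is presented there without a formal proof, merely as the combination of the Erd\H{o}s--Szekeres bound with Corollary~\ref{alphaomega} (to bound $\lambda_{|_{\mathcal C^*}}(n)$) and Theorem~\ref{lambda-deten} (for the first two inequalities). Your case split on whether the large structure is a clique or an independent set, and your remark that the clique case is responsible for the ``$+1$'', are precisely the content of that combination.
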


\section{A greedy algorithm for finding distinguishing sets and determining sets of twin-free graphs}\label{sec:greedy}

Babai \cite{babai} defined distinguishing sets because of their usefulness in the graph isomorphism problem.
Indeed, by constructing a canonical labeling, he proved that deciding whether a graph $G$ of order $n$ is isomorphic to any other can be done in  $o(n^{s+3})$ time
whenever $G$ has a distinguishing set of size $s$.
 Thus, Babai provided the following result on distinguishing sets by means of a probabilistic argument.

\begin{lemma}{\rm \cite{babai}}\label{babaibound} Let $G$ be a graph of order $n$ and let $k$ be such that
$|N(x)\Delta N(y)|\geq k$ for any $x,y\in V(G)$. Then, $G$ has a distinguishing set of cardinality at
most $\lceil\frac{2n\log n}{k+2}\rceil$ provided $k>4\log n$.
\end{lemma}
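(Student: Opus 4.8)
The plan is to use the probabilistic method. First I would restate the target combinatorially: for a pair $\{x,y\}$, let $W_{xy}=\{x,y\}\cup\big(N(x)\Delta N(y)\big)$ be the set of vertices that distinguish it, so that a set $D$ is a distinguishing set of $G$ exactly when $D\cap W_{xy}\neq\emptyset$ for every pair. The crucial preliminary observation is that $|W_{xy}|\ge k+2$ for every pair $\{x,y\}$: this set contains $x$ and $y$ together with the at least $k$ vertices lying in the symmetric difference of their neighborhoods. Keeping track of which of $x,y$ do or do not belong to $N(x)\Delta N(y)$ --- this depends on whether $xy$ is an edge --- is exactly what produces the ``$+2$'', and it is the place where the hypothesis must be invoked in the precise form used by Babai. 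I expect this to be the main obstacle: every other estimate below is a routine averaging or tail bound, but getting $|W_{xy}|\ge k+2$ to hold uniformly over adjacent and non-adjacent pairs is the delicate point, and it is tightly coupled to the exact constant $k+2$ appearing in the statement.

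Next I would run the fixed-size version of the argument, which has the advantage of matching the claimed bound with no deletion step. Set $m=\lceil\frac{2n\log n}{k+2}\rceil$; the hypothesis $k>4\log n$ gives $\frac{2n\log n}{k+2}<\frac n2$, hence $m<n$, so the bound is non-trivial (otherwise $V(G)$ itself is a distinguishing set of the required size). Choose $D\subseteq V(G)$ uniformly at random among all $m$-element subsets. For a fixed pair $\{x,y\}$ with $w=|W_{xy}|\ge k+2$, the probability that $D\cap W_{xy}=\emptyset$ is
$$\frac{\binom{n-w}{m}}{\binom{n}{m}}=\prod_{j=0}^{m-1}\frac{n-w-j}{n-j}\le\Big(1-\frac{w}{n}\Big)^{m}\le e^{-wm/n}\le e^{-(k+2)m/n}\le e^{-2\log n}=n^{-2},$$
where the last step uses $m\ge\frac{2n\log n}{k+2}$ and takes $\log$ to be the natural logarithm (smaller bases only make the bound stronger).

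Finally I would close with a union bound over the fewer than $\binom{n}{2}<\frac{n^2}{2}$ pairs: the probability that $D$ fails to distinguish some pair is at most $\binom{n}{2}n^{-2}<\frac12<1$. Therefore some $m$-element set $D$ distinguishes every pair of $V(G)$, and $|D|=m=\lceil\frac{2n\log n}{k+2}\rceil$, as required. As an alternative one could instead include each vertex independently with probability $p=\frac{2\log n}{k+2}<1$, observe $\mathbb{E}\,|D|=pn=\frac{2n\log n}{k+2}$, bound the failure probability in the same way, and then remove one vertex from each offending pair (or apply Markov's inequality to $|D|$) to control the size; but the fixed-size argument above lands exactly on the stated bound, so I would present that one.
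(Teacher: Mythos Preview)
The paper does not give its own proof of this lemma; it is quoted verbatim from Babai and the surrounding text only records that it was obtained ``by means of a probabilistic argument''. Your random-subset computation is exactly that standard argument, so methodologically you are reproducing what the paper attributes to Babai, and the estimates from $\binom{n-w}{m}/\binom{n}{m}$ through the union bound are all correct.

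There is, however, a genuine gap at precisely the spot you flagged as ``the main obstacle''. You need $|W_{xy}|\ge k+2$ uniformly, but this fails for adjacent pairs: if $\{x,y\}\in E(G)$ then $y\in N(x)\setminus N(y)$ and $x\in N(y)\setminus N(x)$, so $\{x,y\}\subseteq N(x)\,\Delta\,N(y)$ and hence $W_{xy}=N(x)\,\Delta\,N(y)$, which gives only $|W_{xy}|\ge k$. (For non-adjacent pairs your $k+2$ is fine.) With the uniform lower bound $k$ your argument delivers $\lceil 2n\log n/k\rceil$ rather than $\lceil 2n\log n/(k+2)\rceil$. This discrepancy is cosmetic for the purposes of the present paper, which uses the lemma only qualitatively, but if you want the constant exactly as stated you cannot get it from the hypothesis as written here; you would need the hypothesis in the form $|(N(x)\,\Delta\,N(y))\setminus\{x,y\}|\ge k$ (equivalently, at least $k$ vertices \emph{other than} $x,y$ distinguish the pair), under which $|W_{xy}|\ge k+2$ does hold for all pairs. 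That is presumably the formulation in Babai's original, and you should say so rather than assert that the ``$+2$'' comes out of the case analysis --- it does not.
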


Note that the graphs considered in this last result are  twin-free. Thus, we deduce from Lemma~\ref{babaibound} and Remark~\ref{sep-loc-dom} another
result supporting Conjecture~\ref{twin_free_conj}: a graph $G$ of order $n\geq 32$ satisfies $\lambda(G)\leq \frac{n}{2}$
whenever  $|N(x)\Delta N(y)|> 4\log n$ for any $x,y\in V(G)$.  Similarly,
we  provide in this section a polynomial time algorithm that produces distinguishing sets of bounded size but  having no restriction
on the twin-free graph $G$.
 Hence, we obtain one of the main results of this paper which is
 an explicit upper bound on $\lambda_{|_{\mathcal{C^*}}}(n)$ (see Subsection~\ref{subsec:ld}).
 Also, this algorithm produces determining sets of bounded size and so an upper bound on the determining number of a twin-free graph (see Subsection~\ref{subsec:det}).
  To do this, we next provide some notation.

 For any set $D\subseteq V(G)$, let us define a relation   on $V(G)$ given by
$u\sim_D v$ if and only if either $u=v$ or $\{u,v\}$ is distinguished by no vertex of $D$.
It is easy to check that this is an equivalence relation, and so we denote by $[u]_D$ 
 the set of vertices $v\in V(G)$ so that $u\sim_D v$. Thus, let
$D^1=\{u\in V(G)\setminus D: |[u]_D|=1\}$ and $D^{>1}=V(G)\setminus (D\cup D^1)$.
Observe that $D,D^1,D^{>1}$ form a partition of $V(G)$, where any of these sets may be empty.
Actually,  $D$ is a distinguishing set if and only if $D^{>1}=\emptyset$.

The following greedy algorithm gives a partition of $V(G)$
into three sets   so that, combining them properly, one obtains distinguishing sets and determining sets
 of $G$ of bounded size, as we shall see in Lemmas~\ref{bb_dis} and \ref{bb_det}.

\begin{algorithm}
\label{algorithm}

\KwIn{A twin-free graph $G$ and a vertex $u_0\in V(G)$.}
\KwOut{An appropriate partition of $V(G)$ into three subsets $A,B,C$.}

\medskip
$A\leftarrow \{u_0\}$\\
$B\leftarrow A^1$\\
$C\leftarrow A^{>1}$\\
\While{$\exists$ $u,x,y\in C$ such that $[x]_A=[y]_A$ and $[x]_{A\cup\{u\}}\neq[y]_{A\cup\{u\}}$}
      {$A\leftarrow A\cup \{u\}$\\
       $B\leftarrow A^1$\\
       $C\leftarrow A^{>1}$\\
      }
\Return{A,B,C}
\caption{}
\end{algorithm}

\subsection{A better upper bound on $\lambda_{|_{\mathcal C^*}}(n)$}\label{subsec:ld}

 Colbourn et al.~\cite{colbourn} showed that the problem of computing the locating-domination number of an arbitrary  graph is   NP-hard.
   Hence, when designing polynomial time algorithms
for computing this parameter, it is necessary the restriction to specific families of graphs. Indeed,
linearity for trees and series-parallel graphs was proved in \cite{colbourn}.
  Likewise, for a twin-free graph $G$, we next show that Algorithm~\ref{algorithm} returns
distinguishing sets of bounded size in polynomial time. Further, this  gives a bound on $\lambda_{|_{\mathcal C^*}}(n)$
which improves that given in Corollary~\ref{erdos_coro}, and consequently the upper bound of Proposition~\ref{lowerCAGAPUSE} by Cáceres et al.~\cite{cagapuse}.

\begin{lemma} \label{bb_dis}
Let $A,B,C$ be the sets obtained by application of Algorithm~\ref{algorithm} to a twin-free graph $G$ and any vertex $u_0\in V(G)$.
Then, $ A\cup B$, $ A\cup C$ and $B\cup C$ are distinguishing sets of $G$.
\end{lemma}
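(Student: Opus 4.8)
The plan is to analyze the partition $A,B,C$ that Algorithm~\ref{algorithm} produces at termination, and to show that each of the three pairwise unions is a distinguishing set, i.e.\ distinguishes every pair $\{x,y\}$ of vertices of $G$. The whole argument rests on understanding what the \textbf{while}-loop guarantees when it stops. Recall the notation $u\sim_D v$ and the classes $[u]_D$; at each iteration $B=A^1$ and $C=A^{>1}$, so $A,B,C$ partition $V(G)$, $B$ consists of vertices outside $A$ already separated from everything by $A$, and $C$ consists of vertices still lumped together in a $\sim_A$-class of size $\geq 2$. The loop exits precisely when there are no $u,x,y\in C$ with $[x]_A=[y]_A$ but $[x]_{A\cup\{u\}}\neq[y]_{A\cup\{u\}}$; in words, \emph{no vertex of $C$ can further split any $\sim_A$-class that lies inside $C$}.

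First I would treat $A\cup B$. Suppose $\{x,y\}$ is not distinguished by any vertex of $A\cup B$; since $A,B,C$ partition $V(G)$ and a vertex always distinguishes a pair it belongs to, we may assume $x,y\in C$. In particular $x\sim_A y$, so $[x]_A=[y]_A\subseteq C$ (any vertex of this class that were separated from $x$ by $A$ would contradict $x\sim_A y$; and a vertex of the class lying in $B$ is impossible since $B=A^1$). Because $G$ is twin-free, some vertex $w\in V(G)$ distinguishes $\{x,y\}$, and $w\notin A\cup B$ by assumption, so $w\in C$. Then $u:=w$, together with $x,y\in C$, satisfies $[x]_A=[y]_A$ and $[x]_{A\cup\{w\}}\neq[y]_{A\cup\{w\}}$ — contradicting the exit condition of the loop. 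Hence $A\cup B$ is a distinguishing set. The same argument verbatim handles $A\cup C$: if $\{x,y\}$ is undistinguished by $A\cup C$ we may take $x,y\in B$, but $B=A^1$ means each vertex of $B$ is a singleton $\sim_A$-class, so $x$ is already distinguished from $y$ by a vertex of $A$ — contradiction. (Indeed $A\cup C$ is a distinguishing set for the trivial reason that $A$ alone distinguishes $B$ from everything.)

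The interesting case, and the one I expect to be the main obstacle, is $B\cup C$, since here $A$ is discarded and we can no longer invoke $\sim_A$ directly. The key observation to make rigorous is that $A$ was built one vertex at a time, each newly added vertex $u$ being chosen because it splits a $\sim$-class that was contained in the then-current $C$; I would argue inductively that adding $u$ to $A$ while simultaneously keeping all the vertices then in $C$ preserves every distinction $u$ creates — more precisely, that the final $B\cup C$ ``inherits'' the separating power of $A$ in the following sense: for any $x,y$ with $x\not\sim_{A\cup B\cup C}y$ trivially (they span the partition), if $\{x,y\}$ is distinguished by some $a\in A$, I claim it is also distinguished by some vertex of $B\cup C$. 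Granting this claim, a pair undistinguished by $B\cup C$ would be undistinguished by $A\cup B\cup C=V(G)$, impossible in a twin-free graph. To prove the claim I would induct on the order in which vertices enter $A$: when $u$ was adjoined, the class $[x]_A=[y]_A$ it split lay inside the current $C$, and every vertex of the \emph{current} $C$ either survives into the final $C$ or gets promoted into $A$ and then, being split off later, ends up distinguished; tracking this bookkeeping — showing a representative witness for each split of $\{x,y\}$ always survives into $B\cup C$ — is the delicate part and will require care with the loop invariant $B=A^1$, $C=A^{>1}$. Once that is in place, all three unions are distinguishing sets and the lemma follows.
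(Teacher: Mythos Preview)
Your arguments for $A\cup B$ and $A\cup C$ are correct and essentially identical to the paper's.

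For $B\cup C$ there is a genuine gap. You correctly reduce to showing that every pair $\{x_j,x_k\}\subseteq A$ is distinguished by some vertex of $B\cup C$, and you correctly sense that the relevant bookkeeping is ``some witness from each split $\sim_A$-class survives into the final $B\cup C$''. But your sketch conflates two different pairs: the pair $\{x,y\}\subseteq A$ you are trying to distinguish, and the pair whose class gets split when a new vertex enters $A$ (you write ``the class $[x]_A=[y]_A$ it split'', which is meaningless once $x,y\in A$). More importantly, even granting that a representative from each half of every split survives into $B\cup C$, you never explain \emph{why} such a survivor distinguishes a pair $\{x_j,x_k\}$ lying inside $A$. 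A surviving vertex $\alpha\in B\cup C$ was separated from its partner $\beta$ by some $x_k$; that tells you about the adjacencies of $\alpha,\beta$ to $x_k$, not immediately about whether $\alpha$ separates $x_j$ from $x_k$.

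The paper closes this gap with a short but essential ``swap'' observation. Order $A=\{x_1,\dots,x_{|A|}\}$ by time of insertion. For $j<k$, when $x_k$ was added it split a $\sim_{A_{k-1}}$-class, so there exist $\alpha,\beta$ (and, as you intuit, one such pair persists in the final $B\cup C$, since whenever a vertex of $C$ is promoted to $A$ its class has size $\geq 2$ and a companion remains behind) with $\alpha\sim_{A_{k-1}}\beta$ but $x_k$ distinguishing $\{\alpha,\beta\}$. In particular $x_j\in A_{k-1}$ is adjacent to both of $\alpha,\beta$ or to neither, while $x_k$ is adjacent to exactly one, say $\alpha$. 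Then $\beta$ (in the first case) or $\alpha$ (in the second) is a vertex of $B\cup C$ adjacent to exactly one of $x_j,x_k$, hence distinguishes $\{x_j,x_k\}$. This symmetry step --- turning ``$x_k$ distinguishes $\{\alpha,\beta\}$ while $x_j$ does not'' into ``one of $\alpha,\beta$ distinguishes $\{x_j,x_k\}$'' --- is precisely the idea your inductive plan is missing, and without it the argument does not close.
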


\begin{proof}
 Note first that Algorithm~\ref{algorithm} returns a partition of $V(G)$ into
three subsets $A,B,C$ such that
$B=A^1$, $C=A^{>1}$ and no pair $\{x,y\}\subseteq C$ with $[x]_A=[y]_A$ is distinguished by any $u\in C\setminus\{x,y\}$.   $G$ is twin-free and so
 there must be a vertex $u\in V(G)\setminus\{x,y\}$ distinguishing $\{x,y\}$, which implies that $u\in A\cup B$.
 Hence, $A\cup B$ is a distinguishing set of $G$ since every pair $\{x,y\}\subseteq C$ is distinguished by some $u\in B$ whenever $[x]_A= [y]_A$
(otherwise $[x]_A\neq[y]_A$ and so $\{x,y\}$ is distinguished by some $u\in A$).
Also, $A\cup C$ is a distinguishing set since  every $x\in V(G)\setminus (A\cup C)=B$ is uniquely determined by $A\subseteq A\cup C$.
Finally, to prove that $B\cup C$ is a distinguishing set, let $x_1,...,x_{|A|}$ be the
elements of $A$ sorted by appearance in Algorithm~\ref{algorithm}.
We shall prove that every pair $\{x_i,x_j\}$ with $i<j$ is distinguished by some vertex of $B\cup C$.

In the $i$-th step of the algorithm, a vertex $u\in C$ is added to $A$ and becomes $x_i$
 because $u$ distinguishes a pair $\{x,y\}\subseteq C$ such that $[x]_A=[y]_A$ and
 so this class is splat into two new classes
 $[x]_{A\cup\{x_i\}}$ and $[y]_{A\cup\{x_i\}}$.
 Thus, any pair  $\{\alpha,\beta\}$ with $\alpha\in [x]_{A\cup\{x_i\}}$ and $\beta\in [y]_{A\cup\{x_i\}}$ is
distinguished by $x_i$ and non-distinguished by any of $x_1,...,x_{i-1}$.
 Moreover, in the following steps, it always remains one such pair $\{\alpha,\beta\}\subseteq B\cup C$.
Indeed, when $u$ is sent to $A$, there is another
vertex $u'\in [u]_A$ which either
stays in $C$ or goes to $B$ since $C$ is formed by vertices of non-unitary classes.
 It follows that for every pair $\{x_j,x_k\}$ with $j<k$, there exists
 a pair $\{\alpha,\beta\}\subseteq B \cup C$ non-distinguished by $x_j$
 but distinguished by $x_k$. Thus, assume without loos of generality that
  $x_j\in N(\alpha)\cap N(\beta)$ and $x_k\in N(\alpha)\setminus N(\beta)$ (the remaining cases are analogous).
 Hence, $\{x_j,x_k\}$ is distinguished by $\beta$, which completes the proof.
\end{proof}

The pigeonhole principle ensures that one set among $A,B,C$ has cardinality at least $\lceil\frac{n}{3}\rceil$ and so one of
$A\cup B,A\cup C,B\cup C$ has cardinality at most $\lfloor\frac{2}{3}n\rfloor$. Then, by Lemma~\ref{bb_dis} and Remark~\ref{sep-loc-dom}, we have the following result.

\begin{theorem}\label{23nbound}Let $G$ be a twin-free graph of order $n\geq 4$. Then, there exists   a
 locating-dominating set of $G$ of cardinality at most
$\lfloor\frac{2}{3}n\rfloor+1$ which can be computed in polynomial time. In particular,
 $$\lambda_{|_{\mathcal C^*}}(n)\leq\lfloor\frac{2}{3}n\rfloor+1.$$
\end{theorem}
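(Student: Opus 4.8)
The plan is to invoke Algorithm~\ref{algorithm} and Lemma~\ref{bb_dis} directly, then bridge the gap between distinguishing sets and locating-dominating sets via Remark~\ref{sep-loc-dom}. Concretely, I would run Algorithm~\ref{algorithm} on $G$ with an arbitrarily chosen $u_0\in V(G)$, producing the partition $V(G)=A\cup B\cup C$; by Lemma~\ref{bb_dis} each of $A\cup B$, $A\cup C$ and $B\cup C$ is a distinguishing set of $G$. Since $A$, $B$, $C$ partition an $n$-element set, the pigeonhole principle gives one of them cardinality at least $\lceil n/3\rceil$, so the union of the remaining two has cardinality at most $n-\lceil n/3\rceil=\lfloor\frac{2}{3}n\rfloor$. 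Choosing that smallest union and applying Remark~\ref{sep-loc-dom} yields a locating-dominating set of $G$ of size at most $\lfloor\frac{2}{3}n\rfloor+1$; since $G$ was an arbitrary twin-free graph of order $n$, this is also the claimed upper bound on $\lambda_{|_{\mathcal C^*}}(n)$.

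The remaining obligation is the polynomial running time. First I would observe that the \texttt{while} loop of Algorithm~\ref{algorithm} iterates at most $n-1$ times, because every pass strictly enlarges $A$ and $A\subseteq V(G)$. Second, each pass is polynomial: the loop guard searches over the $O(n^3)$ triples $u,x,y\in C$, and for a fixed triple one checks $[x]_A=[y]_A$ and $[x]_{A\cup\{u\}}\neq[y]_{A\cup\{u\}}$ merely by comparing the adjacencies of $x$ and $y$ to the (at most $n$) vertices of $A$ and of $A\cup\{u\}$. Updating $B=A^1$ and $C=A^{>1}$ is likewise polynomial. Finally, selecting the smallest of the three unions and adding at most one vertex to restore domination (as in the proof behind Remark~\ref{sep-loc-dom}) costs only polynomial time. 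Putting these together establishes the polynomial bound, and hence the ``in particular'' statement $\lambda_{|_{\mathcal C^*}}(n)\leq\lfloor\frac{2}{3}n\rfloor+1$.

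Essentially all the real work has already been done in Lemma~\ref{bb_dis}; what is left is a one-line pigeonhole argument plus the bookkeeping for the running time, so I do not anticipate a genuine obstacle. If there is a delicate point, it is making the termination and complexity count fully rigorous, namely tracking how the classes $[\cdot]_A$ refine as $A$ grows; but this is routine, and the hypothesis $n\geq 4$ in the statement is only there so that $\lambda_{|_{\mathcal C^*}}(n)$ is defined at all (recall $P_4$ is the smallest twin-free graph).
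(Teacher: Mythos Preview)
Your proposal is correct and follows essentially the same approach as the paper: the paper's proof is exactly the one-line pigeonhole argument on the partition $A,B,C$ from Algorithm~\ref{algorithm}, combined with Lemma~\ref{bb_dis} and Remark~\ref{sep-loc-dom}. Your added justification of the polynomial running time (bounding the number of loop iterations by $|A|\le n$ and the cost per iteration by an $O(n^3)$ search over triples) is more explicit than what the paper provides, but entirely in the same spirit.
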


The next corollary summarizes some of the main results of this paper, i.e.,
Theorems~\ref{lowerbound}, \ref{lambda-deten} and \ref{23nbound}. As far as we know, these are the
best bounds   on the function $(\dim-\dete)(n)$.
\begin{corollary}\label{bestbounds} For every $n\geq 14$,
$$\lfloor\frac{n}{2}\rfloor-1\leq (\dim-\dete)(n)\leq (\lambda-\dete)(n)\leq\lambda_{|_{\mathcal C^*}}(n)\leq\lfloor\frac{2}{3}n\rfloor+1.$$
\end{corollary}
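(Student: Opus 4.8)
The plan is to simply assemble the inequalities that have already been established in the excerpt, since Corollary~\ref{bestbounds} is a summary statement rather than a new result. First I would invoke Theorem~\ref{lowerbound}, which gives $(\dim-\dete)(n)\geq\lfloor\frac{n}{2}\rfloor-1$ for every $n\geq 14$; this supplies the leftmost inequality. Next, the two middle inequalities $(\dim-\dete)(n)\leq(\lambda-\dete)(n)\leq\lambda_{|_{\mathcal C^*}}(n)$ are precisely the content of Theorem~\ref{lambda-deten}, valid for all $n\geq 4$ and in particular for $n\geq 14$. Finally, Theorem~\ref{23nbound} provides $\lambda_{|_{\mathcal C^*}}(n)\leq\lfloor\frac{2}{3}n\rfloor+1$, again for all $n\geq 4$. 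Chaining these four facts together yields the displayed string of inequalities on the common range $n\geq 14$.

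\begin{proof}
This is a direct combination of previously established results on the range where all of them are simultaneously valid. For $n\geq 14$, Theorem~\ref{lowerbound} gives the lower bound
$$(\dim-\dete)(n)\geq\Big\lfloor\frac{n}{2}\Big\rfloor-1.$$
Theorem~\ref{lambda-deten} (applicable since $n\geq 14>4$) provides the chain
$$(\dim-\dete)(n)\leq(\lambda-\dete)(n)\leq\lambda_{|_{\mathcal C^*}}(n),$$
and Theorem~\ref{23nbound} (again applicable since $n\geq 4$) yields
$$\lambda_{|_{\mathcal C^*}}(n)\leq\Big\lfloor\frac{2}{3}n\Big\rfloor+1.$$
Concatenating these inequalities gives the stated result.
\end{proof}

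There is no real obstacle here: the only thing to verify is that the hypotheses of the three invoked theorems overlap, and since Theorem~\ref{lowerbound} requires $n\geq 14$ while the other two require only $n\geq 4$, the common range is $n\geq 14$, exactly as claimed. No new construction, estimate, or case analysis is needed — the statement is an aggregation step. If anything required care, it would merely be confirming that the quantities compared (namely $(\dim-\dete)(n)$, $(\lambda-\dete)(n)$ and $\lambda_{|_{\mathcal C^*}}(n)$) are the same objects across the three theorems, which they are by the definitions fixed in Section~\ref{sec:intro} and Subsection~\ref{subsec:tilde}.
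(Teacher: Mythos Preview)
Your proposal is correct and matches the paper's own treatment exactly: the paper does not give a separate proof but simply states that the corollary ``summarizes some of the main results of this paper, i.e., Theorems~\ref{lowerbound}, \ref{lambda-deten} and \ref{23nbound}.'' Your explicit verification that the hypotheses overlap on $n\geq 14$ is, if anything, slightly more careful than what the paper writes.
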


\subsection{An upper bound on $\dete(G)$ for   twin-free graphs}\label{subsec:det}

Blaha~\cite{blaha} showed that finding a minimum base of a permutation group is NP-hard and
provided a greedy algorithm  for constructing bases. The same algorithm was given by Gibbons
 and Laison~\cite{gibbons} in the particular case of
automorphism groups of graphs: for a graph $G$ of order $n$, the algorithm returns a determining
set of size $O(\dete(G)\log\log n)$.
 Observe that this algorithm does not yield a bound
on the determining number of $G$ in terms of $n$. However, we next show that Algorithm~\ref{algorithm} gives an explicit upper bound
on $\dete(G)$ when $G$ is twin-free by constructing a determining set of bounded size  in polynomial time.

\begin{lemma} \label{bb_det}
Let $A,B,C$ be the sets obtained by application of Algorithm~\ref{algorithm} to a twin-free graph $G$ and any vertex $u_0\in V(G)$.
Then,   $A$ and $B\cup C$ are determining sets of $G$.

\end{lemma}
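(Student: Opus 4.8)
The plan is to show that each of the two sets $A$ and $B\cup C$ is a determining set by using the fact that, after Algorithm~\ref{algorithm} terminates, no pair of vertices in the same $A$-class is distinguished by any vertex of $C$; in particular, any two vertices in a common $A$-class that are not already separated are twins ``relative to $A$'' in a strong sense. The key observation is that a nontrivial automorphism $\phi\in\stab(A)$ (resp. $\phi\in\stab(B\cup C)$) would move some vertex $w$ to a distinct vertex $\phi(w)$, and since $\phi$ fixes every vertex of the candidate determining set and preserves adjacency, $w$ and $\phi(w)$ must satisfy $N(w)\cap T=N(\phi(w))\cap T$ and $w,\phi(w)\notin T$ for the fixed set $T$; i.e. $T$ distinguishes no pair in $\{w,\phi(w)\}$. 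So it suffices to prove that for the two choices $T=A$ and $T=B\cup C$, the only pairs left undistinguished by $T$ are ``trivial'' in the sense that no automorphism can swap them without violating twin-freeness of $G$.

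For $T=A$: every vertex of $B=A^1$ has a singleton $A$-class, so $B$ is fixed setwise and pointwise by any $\phi\in\stab(A)$ (each $b\in B$ is the unique vertex with its neighborhood pattern on $A$, and $\phi$ preserves that pattern). Hence $\phi$ fixes $A\cup B$ pointwise, and by Lemma~\ref{bb_dis} the set $A\cup B$ is a distinguishing set of $G$; a distinguishing set that is fixed pointwise forces $\phi=id_G$ because distinct vertices have distinct neighborhood patterns on $A\cup B$, so $\phi$ cannot move any vertex. Thus $\stab(A)$ is trivial and $A$ is a determining set. The argument for $T=B\cup C$ is parallel but needs the last part of the proof of Lemma~\ref{bb_dis}: if $\phi\in\stab(B\cup C)$, then $\phi$ fixes $B\cup C$ pointwise, so to show $\phi=id_G$ we must rule out $\phi$ permuting the vertices of $A$ nontrivially. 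Here I would invoke exactly the chain of undistinguished-pair witnesses $\{\alpha,\beta\}\subseteq B\cup C$ constructed in that proof: for each pair $\{x_i,x_j\}$ with $i<j$ there is a pair in $B\cup C$ separated by $x_j$ but not by $x_i$, which shows that the neighborhood patterns of the $x_i$ on $B\cup C$ are pairwise distinct (and distinct from those of $B\cup C$ vertices, since $A\cup C$ and $A\cup B$ are already distinguishing). Consequently $\phi$, fixing $B\cup C$ pointwise, cannot move any $x_i$ to a different vertex, so $\phi=id_G$ and $B\cup C$ is a determining set.

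The step I expect to be the main obstacle is the $B\cup C$ case: one has to be careful that fixing $B\cup C$ pointwise really pins down each individual element of $A$, not just $A$ setwise. The resolution is to reuse the ordering $x_1,\dots,x_{|A|}$ from Lemma~\ref{bb_dis} and the fact, established there, that each newly added $x_i$ splits a class and leaves a surviving witness pair inside $B\cup C$; this gives, for every $i<j$, a vertex of $B\cup C$ adjacent to exactly one of $x_i,x_j$, hence $x_i$ and $x_j$ are separated by $B\cup C$. Combined with $A\cup C$ and $A\cup B$ being distinguishing (so $B\cup C$ separates every $x_i$ from every vertex of $B\cup C$ as well — equivalently, no $x_i$ shares its $B\cup C$-pattern with anything), this shows $B\cup C$ is itself a distinguishing set of $G$, and any distinguishing set is a determining set because an automorphism fixing a distinguishing set pointwise preserves and thus cannot alter any neighborhood pattern. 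So the proof reduces, for both $A$ and $B\cup C$, to the single clean principle: \emph{a distinguishing set is a determining set}, together with the facts from Lemma~\ref{bb_dis} identifying $A\cup B$, $A\cup C$, $B\cup C$ as distinguishing, plus the observation that $A$ is distinguishing precisely because $A\cup B$ is and $B$ is $A$-determined.
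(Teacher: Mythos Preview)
Your proposal is correct and follows essentially the same approach as the paper: for $B\cup C$ you use that it is a distinguishing set (hence determining), and for $A$ you show that any $\phi\in\stab(A)$ must fix each vertex of $B=A^1$ pointwise, so $\phi\in\stab(A\cup B)$, which is trivial since $A\cup B$ is distinguishing. Two small remarks: your lengthy re-derivation that $B\cup C$ is distinguishing is unnecessary, since Lemma~\ref{bb_dis} already states this outright (the paper's proof for $B\cup C$ is one line); and in your final summary the phrase ``$A$ is distinguishing'' is a slip --- $A$ is not distinguishing in general (vertices of $C$ share $A$-classes), only determining, which is exactly what your preceding argument correctly establishes.
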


\begin{proof}
 We have proved in Lemma~\ref{bb_dis} that $B\cup C$ is a distinguishing set of $G$,
which implies that it is also a resolving set and so it is a determining set. To prove
 that $A$ is a determining set of $G$, we first claim that $stab(A)=stab(A\cup \{x\})$ for every $x\in B$.
Indeed, $stab(A) \supseteq stab(A\cup \{x\})$, by definition of the stabilizer.
Also, note that $N(x)\cap A$ is unique since $B=A^1$, and recall that automorphisms preserve adjacencies. Thus,
no automorphism fixing every vertex of $A$ can interchange $x$ with any other vertex of $V(G)$.
 Hence,
$stab(A)\subseteq stab (A\cup \{x\})$. Therefore, extending this argument to every vertex of $B$, we obtain that $stab(A)=stab( A\cup B)$. But $ A\cup B$ is a distinguishing set by Lemma~\ref{bb_dis} and so it is a determining set, which implies that $stab( A\cup B)=stab(A)=\{id_G\}$.
It follows that $A$ is a determining set.
\end{proof}

Reasoning as in the previous subsection, we have that either $A$ or $B\cup C$ has cardinality at most $\lfloor\frac{n}{2}\rfloor$ and so,
by Lemma~\ref{bb_det}, we obtain the following bound.

\begin{theorem}\label{twinfreedete}
Let $G$ be a twin-free graph of order $n\geq 4$. Then, there exists a determining set of $G$ of cardinality at most $\lfloor\frac{n}{2}\rfloor$
 which can be computed in polynomial time. In particular,
$$\dete (G)\leq \lfloor\frac{n}{2}\rfloor.$$
\end{theorem}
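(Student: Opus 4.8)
The statement to prove is Theorem~\ref{twinfreedete}: for a twin-free graph $G$ of order $n\geq 4$, Algorithm~\ref{algorithm} produces a determining set of size at most $\lfloor n/2\rfloor$ in polynomial time, hence $\dete(G)\leq\lfloor n/2\rfloor$. The plan is to combine Lemma~\ref{bb_det} with a counting argument on the partition $\{A,B,C\}$ of $V(G)$ returned by the algorithm, and then separately address the running time.

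First I would invoke Lemma~\ref{bb_det}, which already asserts that both $A$ and $B\cup C$ are determining sets of $G$. Since $A$, $B$ and $C$ partition $V(G)$, we have $|A|+|B\cup C|=|A|+|B|+|C|=n$. Therefore $\min\{|A|,|B\cup C|\}\leq\lfloor n/2\rfloor$: indeed if both were strictly larger than $\lfloor n/2\rfloor$ then their sum would exceed $2\lfloor n/2\rfloor\geq n-1$, and since they are integers summing to exactly $n$ this forces $n$ odd and $|A|=|B\cup C|=(n+1)/2$, contradicting $|A|+|B\cup C|=n$; so at least one of them has size at most $\lfloor n/2\rfloor$. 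Picking whichever of the two determining sets is smaller gives a determining set of size at most $\lfloor n/2\rfloor$, and hence $\dete(G)\leq\lfloor n/2\rfloor$.

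For the polynomial-time claim I would argue that each iteration of the \textbf{while} loop can be executed in polynomial time: computing the equivalence classes $[x]_{A}$ and $[x]_{A\cup\{u\}}$ for all vertices amounts to comparing, for each vertex $x$, its adjacency pattern to the current set $A$ (and to $A\cup\{u\}$), which is $O(n^2)$ work for all vertices, and searching over the candidate triples $u,x,y\in C$ is $O(n^3)$; hence each iteration is polynomial. Moreover the loop terminates after at most $n$ iterations, since each iteration moves one vertex from $C$ into $A$ and strictly enlarges $A$, so $|A|$ is a strictly increasing integer bounded by $n$. Thus the whole algorithm runs in polynomial time, and the determining set it yields (the smaller of $A$ and $B\cup C$) is obtained in polynomial time.

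I do not expect a serious obstacle here, since the heavy lifting — establishing that $A$ and $B\cup C$ are genuinely determining sets — is already done in Lemma~\ref{bb_det} (which in turn leans on Lemma~\ref{bb_dis}). The only mild subtlety is the arithmetic that turns ``$|A|+|B\cup C|=n$'' into ``$\min\leq\lfloor n/2\rfloor$'', which is the pigeonhole-type step flagged in the sentence preceding the theorem; it must be stated carefully for odd $n$ but is otherwise routine. A secondary point worth a line is that the partition really does consist of exactly three parts (some possibly empty), so the counting is valid even in degenerate cases such as $B=\emptyset$ or $C=\emptyset$.
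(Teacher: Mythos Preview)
Your proposal is correct and follows exactly the same approach as the paper: invoke Lemma~\ref{bb_det} to obtain the two determining sets $A$ and $B\cup C$, then use the fact that $|A|+|B\cup C|=n$ to conclude that the smaller one has size at most $\lfloor n/2\rfloor$. Your explicit justification of the polynomial running time and of the floor inequality for odd $n$ are welcome elaborations of what the paper leaves as a one-line remark.
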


Note that, although the graph depicted in Figure~\ref{twinfreedet} does not prove tightness for this last result,
this construction shows that we are very close to a tight bound.
\\

\begin{figure}[ht]
\begin{center}
\includegraphics[width=50mm]{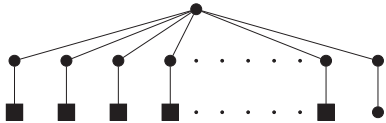}
\caption{A graph $G$ with a  determining set (illustrated with square vertices) of size $\dete(G)=\lfloor\frac{|V(G)|}{2}\rfloor -1$.}\label{twinfreedet}
\end{center}
\end{figure}

\section{Useful tools for the problems restricted to graphs without $K_{2,k}$ as subgraph}\label{sec:C4}

\subsection{$k$-domination}

The concept of $k$-dominating set was introduced by Fink and Jacobson \cite{fink}
 as a generalization of classical dominating sets of graphs. There is a wealth of literature
 about this variety of domination (see \cite{hansberg} and the references given there). Specifically,
 the $k$-domination number $\gamma_k(G)$ has been related to other     graph  parameters
 such as  the path covering number~\cite{graffiti}, the order and the minimum degree~\cite{favaron} and
 the $j$-dependence number~\cite{fink}.
  In this subsection, we establish a relationship between $\gamma_k(G)$  and $\lambda (G)$ when $G$ does not contain
$K_{2,k}$ as a subgraph.
 To do this, we require Lemma~\ref{kdom} below which, in addition, is a key result in the following subsection for
 computing  the restriction of $(\lambda-\dete)(n)$ to this class of graphs   when $k=2$.

Given a set $D\subseteq V(G)$, a vertex $x\in V(G)\setminus D$ and a positive integer $k$, we say that $x$ is {\em $k$-dominated} by $D$ if
 $|N(x)\cap D|\geq k$, and $D$ is a $k$-{\em dominating set} of $G$ if every vertex of $V(G)\setminus D$ is $k$-dominated by $D$.
The $k$-{\em domination number} of $G$, denoted by $\gamma _k(G)$, is the minimum cardinality of a $k$-dominating set of $G$.
 It is straightforward that $\gamma_1(G)=\gamma (G)$ and $\gamma_k (G)\leq\gamma_{\ell}(G)$ for every $k,\ell$ with $1\leq k\leq\ell$.

 Let $\mathcal{K}_{2,k}$ denote  the class of graphs not containing $K_{2,k}$ as a (not necessarily induced) subgraph.
 The following lemma contains the main idea for proving Proposition~\ref{K2k}.

\begin{lemma} \label{kdom} Let $G\in \mathcal{K}_{2,k}$, $D\subseteq V(G)$ and $x\in V(G)\setminus D$.
If $x$ is $k$-dominated by $D$ then, for every $y\in V(G)\setminus D$, the pair $\{x,y\}$ is
distinguished by some vertex of $D$.

\end{lemma}
\begin{proof}
Let $y\in V(G)\setminus D$ and $A\subseteq N(x)\cap D$ such that $|A|=k$.
 Clearly, some vertex of $A$ distinguishes $\{x,y\}$ since otherwise $A\subseteq N(y)$ and so the
 induced subgraph by $A\cup\{x,y\}$ contains a copy
 of $K_{2,k}$, which is impossible.
\end{proof}

Hence, a $k$-dominating set of $G$ is a locating-dominating set whenever $G\in\mathcal{K}_{2,k}$ but the converse is not
true in general, as shown in Figure~\ref{non2dom}. Further,
it was proved in \cite{cockayne} that $\gamma _k(G)\leq \frac{k}{k+1}n$ for any graph $G$ such that $k\leq \delta (G)$.
Thus, we have the following result.

\begin{proposition}\label{K2k} For every $G\in\mathcal{K}_{2,k}$, it holds that $$\gamma (G)\leq \lambda (G)\leq\gamma _k (G).$$ In particular,
$\lambda (G)\leq\frac{k}{k+1}n$ whenever $\delta (G)\geq k$.
\end{proposition}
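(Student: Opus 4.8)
The statement to prove is Proposition~\ref{K2k}: for every $G\in\mathcal{K}_{2,k}$, $\gamma(G)\leq\lambda(G)\leq\gamma_k(G)$, and moreover $\lambda(G)\leq\frac{k}{k+1}n$ when $\delta(G)\geq k$. I would prove the three inequalities in turn, each being essentially a one-line deduction from material already assembled in the excerpt.

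First, the inequality $\gamma(G)\leq\lambda(G)$. Every locating-dominating set is by definition a dominating set, so the minimum size of a locating-dominating set is at least the minimum size of a dominating set; this gives $\gamma(G)\leq\lambda(G)$ with no hypothesis on $G$ at all. Second, the inequality $\lambda(G)\leq\gamma_k(G)$. Let $D$ be a minimum $k$-dominating set of $G$, so $|D|=\gamma_k(G)$. Since $k\geq1$, $D$ is in particular a dominating set. To see that $D$ is locating-dominating, take any two distinct $x,y\in V(G)\setminus D$. Because $D$ is $k$-dominating, $x$ is $k$-dominated by $D$, so Lemma~\ref{kdom} (which uses $G\in\mathcal{K}_{2,k}$) applies and yields a vertex of $D$ distinguishing $\{x,y\}$. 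Hence $D$ is a distinguishing set, and being also dominating, it is a locating-dominating set; therefore $\lambda(G)\leq|D|=\gamma_k(G)$. Third, the final ``in particular'' clause: when $\delta(G)\geq k$, the cited result of \cite{cockayne} gives $\gamma_k(G)\leq\frac{k}{k+1}n$, and chaining this with $\lambda(G)\leq\gamma_k(G)$ gives $\lambda(G)\leq\frac{k}{k+1}n$.

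I do not anticipate any real obstacle here, since all the substance has been offloaded into Lemma~\ref{kdom} and into the external bound $\gamma_k(G)\leq\frac{k}{k+1}n$ for $k\leq\delta(G)$ from \cite{cockayne}. The only point requiring a sentence of care is the application of Lemma~\ref{kdom}: the lemma is stated for a single $k$-dominated vertex $x$ and an arbitrary $y\in V(G)\setminus D$, so one must observe that in a $k$-dominating set \emph{every} vertex outside $D$ plays the role of such an $x$, which is exactly what makes $D$ distinguish all pairs in $V(G)\setminus D$. One should also note explicitly that $k\geq1$ is needed so that a $k$-dominating set is automatically a dominating set (so that ``distinguishing set'' upgrades to ``locating-dominating set''); this is harmless since $\gamma_k$ is only defined for positive $k$. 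No other case analysis is needed, and the proof is complete in a few lines.
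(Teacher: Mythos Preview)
Your proposal is correct and matches the paper's own argument essentially line for line: the paper also derives $\lambda(G)\leq\gamma_k(G)$ directly from Lemma~\ref{kdom} (observing that a $k$-dominating set is locating-dominating when $G\in\mathcal{K}_{2,k}$), takes $\gamma(G)\leq\lambda(G)$ as immediate, and then invokes the bound $\gamma_k(G)\leq\frac{k}{k+1}n$ from \cite{cockayne} for the final clause. Your added remark that $k\geq1$ ensures a $k$-dominating set is dominating is a minor clarification the paper leaves implicit.
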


\begin{figure}[ht]
\begin{center}
\includegraphics[width=50mm]{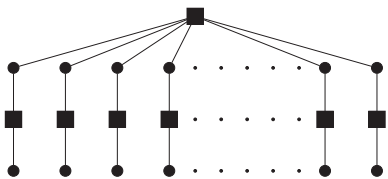}
\caption{A graph in $\mathcal{K}_{2,2}$ with a locating-dominating set (depicted as square vertices) which is not a 2-dominating set.}\label{non2dom}
\end{center}
\end{figure}

Let us denote by $\mathcal{C}_4$ the class $\mathcal{K}_{2,2}$, i.e., the set of graphs not having $C_4=K_{2,2}$ as a subgraph.
 The following corollary  is a consequence  of Proposition~\ref{K2k} when $k=2$, and gives essentially the
 same bound as the one provided by Corollary~\ref{bestbounds}. However, this bound will be improved in the
 following subsection (see Theorem~\ref{C4lambda-det}).

\begin{corollary}
Let $G\in\mathcal{C}_4$ be such that $ \delta (G)\geq 2$. Then, $\lambda (G) -\dete(G)\leq\frac{2}{3}n$.
\end{corollary}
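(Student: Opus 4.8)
The plan is to derive this corollary as a direct consequence of Proposition~\ref{K2k} combined with the earlier bound $\dete(G)\geq n-r$ from Lemma~\ref{detn-r}, where $r=|V(G^*)|$. First I would invoke Proposition~\ref{K2k} with $k=2$: since $G\in\mathcal{C}_4=\mathcal{K}_{2,2}$ and $\delta(G)\geq 2$, we immediately get $\lambda(G)\leq\gamma_2(G)\leq\frac{2}{3}n$. This already gives the bound on $\lambda(G)$ alone, and since $\dete(G)\geq 0$ trivially, one obtains $\lambda(G)-\dete(G)\leq\frac{2}{3}n$ without further work.

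The only subtlety worth a sentence is whether one should sharpen the argument: a twin-free graph argument via Theorem~\ref{lambda-deten} and Theorem~\ref{23nbound} would also give essentially $\frac{2}{3}n$, but the cleanest route here is simply the chain $\lambda(G)-\dete(G)\leq\lambda(G)\leq\frac{2}{3}n$, using only Proposition~\ref{K2k}. There is no real obstacle; the main point is just that membership in $\mathcal{C}_4$ together with minimum degree at least $2$ is exactly the hypothesis needed to apply Proposition~\ref{K2k} with $k=2$, and the nonnegativity of the determining number finishes it. Hence the proof is a one-line deduction.

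\begin{proof}
Since $G\in\mathcal{C}_4=\mathcal{K}_{2,2}$ and $\delta(G)\geq 2$, Proposition~\ref{K2k} with $k=2$ gives $\lambda(G)\leq\gamma_2(G)\leq\frac{2}{3}n$. As $\dete(G)\geq 0$ for every graph, we conclude $\lambda(G)-\dete(G)\leq\lambda(G)\leq\frac{2}{3}n$.
\end{proof}
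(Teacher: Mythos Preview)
Your proof is correct and follows essentially the same approach as the paper: the corollary is stated there simply as a consequence of Proposition~\ref{K2k} with $k=2$, which is exactly the chain $\lambda(G)-\dete(G)\leq\lambda(G)\leq\frac{2}{3}n$ you give. The mention of Lemma~\ref{detn-r} in your plan is unnecessary, as you yourself note, but the final proof is the right one-line deduction.
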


\subsection{Matchings}

The matching number $\alpha'(G)$ has been related to many domination parameters (see for instance \cite{bollobas,seriesB,Hnote,H}).
As an example,  Henning et al.~\cite{Hnote} related the matching number to the {\em total domination number} $\gamma_t(G)$, i.e., the minimum size of a set of vertices dominating every vertex of $G$. 
  Concretely, they proved that $\gamma_t(G)\leq \alpha'(G)$
   whenever $G$ is either a claw-free graph or a $k$-regular graph
    with $k\geq 3$.
In the same vein, we obtain a similar relationship between  $\alpha'(G)$ and   $\lambda (G)$ when $G$ is a twin-free graph in $\mathcal{C}_4$ (see Proposition~\ref{lambdamatching}). Besides its independent interest,
we apply this relation to study the functions
$({\rm dim}-{\rm Det})_{|_{\mathcal{C}_4}}(n)$ and $(\lambda-{\rm Det})_{|_{\mathcal{C}_4}} (n)$ (see Theorems~\ref{C4lambda-det} and
\ref{C4beta-det}).

A {\em matching} $M$ in a graph $G$ is a subset of pairwise disjoint edges of $G$, and the {\em matching number} of $G$, written as
$\alpha '(G)$, is the cardinality of a maximum matching in $G$.
 We denote by $\overline{M}$ the set of vertices of $G$ in no edge of $M$. Observe that $\overline{M}$ is an independent set  when $M$ is maximum (otherwise there is an edge $e=\{x,y\}$ with  $x,y\in \overline{M}$ and so the matching $M'=M\cup \{e\}$ has more edges than $M$, which is impossible). The following is a technical lemma that captures all possible
  situations for the edges of a maximum matching (see Figure~\ref{matching_lemma}).

\begin{lemma}\label{neighbors} Let $M$ be a maximum matching in $G$. Then, for every $\{u,v\}\in M$, exactly one of the following cases holds:
\begin{enumerate}
\item $N(u)\cap \overline{M}=N(v)\cap \overline{M}=\emptyset$.
\item Either $N(u)\cap \overline{M}\neq \emptyset$ or $N(v)\cap \overline{M}\neq\emptyset$, but not both.
\item $N(u)\cap \overline{M}=N(v)\cap \overline{M}=\{x\}$ for some $x\in \overline{M}$.
\end{enumerate}
\end{lemma}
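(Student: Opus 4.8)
The plan is to show first that the three cases are mutually exclusive and then that at least one of them must occur, so that exactly one holds. Mutual exclusivity is essentially bookkeeping: case (1) asserts both neighborhoods (restricted to $\overline M$) are empty, case (2) asserts exactly one is nonempty, and case (3) asserts both equal a common singleton $\{x\}$; clearly no two of these can hold simultaneously, since (1) forces both empty, (2) forces exactly one empty, and (3) forces both nonempty and equal. So the real content is that these cases are exhaustive, i.e. that we can never have $N(u)\cap\overline M$ and $N(v)\cap\overline M$ both nonempty with their union containing two distinct vertices of $\overline M$, nor can we have them both nonempty and equal to a set of size $\geq 2$.

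The key step is the following claim: if $M$ is maximum and $\{u,v\}\in M$, then there do not exist \emph{two distinct} vertices $x,y\in\overline M$ with $x\in N(u)\cup N(v)$ and $y\in N(u)\cup N(v)$. First I would rule out the possibility that both $x,y$ are adjacent to the same endpoint, say $x,y\in N(u)$: then, since $\overline M$ is independent (because $M$ is maximum, as noted in the paragraph preceding the lemma), the vertices $x,y$ are nonadjacent, and I would augment $M$ along the path $x\,u\,v$ — more precisely, I would consider $M' = (M\setminus\{\{u,v\}\})\cup\{\{x,u\},\{y,\,?\}\}$, which does not immediately work; instead the cleanest argument is: if $x,y\in N(u)$ with $x\neq y$ in $\overline M$, then $x\,u\,v$ together with the edge $\{u,v\}\in M$ shows $x$ could be matched to $u$ while $v$ becomes unmatched, but then $y$ is still free and adjacent to $u$ — rather, I would directly exhibit the augmenting path $x\,u\,y$: wait, $u$ is saturated, so $x\,u\,y$ is an $M$-alternating path whose endpoints $x,y$ are both unsaturated, hence an $M$-augmenting path, contradicting maximality of $M$. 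Thus no endpoint of a matching edge has two neighbors in $\overline M$.

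It remains to handle the case $x\in N(u)\setminus N(v)$ and $y\in N(v)\setminus N(u)$ with $x\neq y$: here $x\,u\,v\,y$ is an $M$-alternating path (edges $xu\notin M$, $uv\in M$, $vy\notin M$) with both endpoints unsaturated, again an augmenting path, contradicting maximality. Combining: each of $N(u)\cap\overline M$ and $N(v)\cap\overline M$ has size at most $1$, and if both are nonempty they must be the \emph{same} singleton. This leaves exactly the three listed possibilities — both empty (case 1), exactly one a singleton (case 2), or both equal to a common singleton $\{x\}$ (case 3) — which together with the mutual exclusivity above proves the lemma. The main obstacle is simply being careful that the short alternating paths one writes down genuinely have both endpoints in $\overline M$ (hence unsaturated) and alternate correctly starting and ending with non-matching edges; once that is set up, maximality of $M$ does all the work, and the independence of $\overline M$ is what prevents a free edge from sneaking in to "repair" the augmentation.
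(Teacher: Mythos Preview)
Your augmenting-path instinct is right, and your argument for the case $x\in N(u)\cap\overline M$, $y\in N(v)\cap\overline M$ with $x\neq y$ (the path $x\,u\,v\,y$) is exactly the paper's proof. But your stronger ``key claim'' --- that $(N(u)\cup N(v))\cap\overline M$ has at most one element, hence that each of $N(u)\cap\overline M$ and $N(v)\cap\overline M$ is a singleton or empty --- is \emph{false}, and your argument for it is flawed. The path $x\,u\,y$ with $x,y\in N(u)\cap\overline M$ is \emph{not} $M$-alternating: both edges $xu$ and $uy$ lie outside $M$, so the path does not alternate and is not augmenting. A concrete counterexample is the star $K_{1,3}$ with center $c$ and leaves $a,b,d$: the maximum matching $M=\{\{c,a\}\}$ gives $\overline M=\{b,d\}$ and $N(c)\cap\overline M=\{b,d\}$, of size two. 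This edge $\{c,a\}$ falls under case~(2) of the lemma, which does \emph{not} require the nonempty side to be a singleton.

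The fix is simply to drop the false subclaim. What you actually need (and what the paper proves) is only this: if \emph{both} $N(u)\cap\overline M$ and $N(v)\cap\overline M$ are nonempty, then any $x$ in the first and $y$ in the second must coincide, for otherwise $(M\setminus\{\{u,v\}\})\cup\{\{u,x\},\{v,y\}\}$ is a larger matching (equivalently, $x\,u\,v\,y$ is augmenting). This forces both sets to equal the same singleton $\{x\}$, giving case~(3). If at most one is nonempty you are in case~(1) or~(2), with no size restriction needed. Your second paragraph already contains this correct argument; just excise the attempt to bound $|N(u)\cap\overline M|$ separately.
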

\begin{proof}
Let $M$ be a maximum matching in $G$. It is enough to prove that there is no edge $e=\{u,v\}$ in $M$ and
vertices $x,y\in \overline M$   such that
 $x\in N(u)$ and $y\in N(v)$. Indeed, $(M\setminus\{e\})\cup \{\{u,x\},\{v,y\}\}$ would be
  a matching in $G$ with more edges than $M$, which is impossible.
\end{proof}

\begin{figure}[ht]
\begin{center}
\includegraphics[width=70mm]{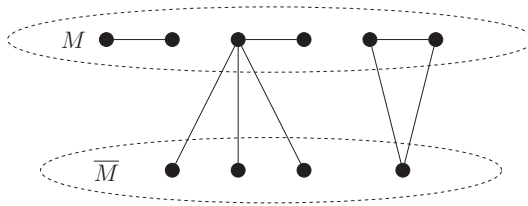}
\caption{The three cases for the edges of a maximum matching $M$ provided by Lemma~\ref{neighbors}.}\label{matching_lemma}
\end{center}
\end{figure}

  For every matching $M$ in $G$, let us consider the set
 $$U_M=\{x\in\overline{M}:N(x)\subseteq e \quad for\quad some \quad e\in M\}.$$
 Note that, when $M$ is maximum,  $U_M$ is formed by all vertices $x\in\overline{M}$ such that $\delta(x)=1$ or $N(x)=e$ for some $e\in M$.
  We next show another technical result which is required in the proof of Proposition~\ref{lambdamatching}.

\begin{lemma}\label{UMempty}
Let $G$ be a twin-free graph. Then, there exists a maximum matching $M$ such that $U_{M}=\emptyset$ which can be computed in polynomial time.
\end{lemma}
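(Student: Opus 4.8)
The plan is to start from an arbitrary maximum matching $M$ and repeatedly perform local modifications that strictly decrease $|U_M|$ (or some related monotone quantity) until $U_M=\emptyset$, arguing that if this process got stuck with $U_M\neq\emptyset$ then $G$ would contain a pair of twins. First I would fix a maximum matching $M$ and suppose $x\in U_M$, so that $N(x)\subseteq e$ for some edge $e=\{u,v\}\in M$; by maximality of $M$ we have $\delta(x)\geq 1$, so either $N(x)=\{u\}$, $N(x)=\{v\}$, or $N(x)=\{u,v\}$. The key observation is that $x$ behaves almost like one of the matched endpoints: if $N(x)=\{u,v\}$, then I would compare $x$ with $u$ and with $v$ and try to rematch, e.g. replacing $e$ by $\{x,u\}$ (still a maximum matching) which moves $v$ into $\overline M$; the point of choosing the rematch carefully is to make $U$ shrink. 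If $N(x)=\{u\}$ (the pendant-like case), then $x$ and $v$ are both adjacent only into $\{u\}$ from outside... — here I must be careful, since $v$ is matched — so instead I would observe that $v$ has no neighbor in $\overline M$ other than possibly $x$ (by Lemma \ref{neighbors}), and use this to show $N(x)$ and $N(v)\setminus\{u\}$ interact rigidly.

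The cleaner route, which I would actually pursue, is: among all maximum matchings, pick one minimizing $|U_M|$, and then derive a contradiction from $U_M\neq\emptyset$. Take $x\in U_M$ with $N(x)\subseteq e=\{u,v\}$. Since $G$ is twin-free, $x$ is not a twin of $u$ nor of $v$, so there is a vertex $w$ distinguishing $\{x,u\}$ and a vertex $w'$ distinguishing $\{x,v\}$. Because $N(x)\subseteq\{u,v\}$, any vertex distinguishing $\{x,u\}$ must be a neighbor of $u$ not equal to... and here the matching structure constrains where such $w,w'$ can live. I would use Lemma \ref{neighbors} to control the edges of $M$ incident to the picture and show that one can swap $e$ for one of $\{u,x\}$ or $\{v,x\}$, obtaining a maximum matching $M'$ with $U_{M'}\subsetneq U_M$: the vertex $x$ leaves $U$ (it is now matched), and I must check no new vertex enters $U_{M'}$, which is where the distinguishing vertices $w,w'$ are used to rule out the freshly unmatched endpoint creating a new "captured" vertex. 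Polynomial-time computability is then immediate: a maximum matching is found in polynomial time (e.g. Edmonds' algorithm), and each swap is a local operation decreasing the integer $|U_M|\le n$, so at most $n$ swaps suffice.

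The main obstacle I anticipate is the bookkeeping in the swap step: when I rematch $e=\{u,v\}$ as $\{u,x\}$, the vertex $v$ drops into $\overline M$, and I need to guarantee that $v\notin U_{M'}$ and that no other vertex of $\overline M$ that was fine before becomes trapped inside an edge of $M'$ — but since $M$ and $M'$ differ only in this one edge (same vertex set covered, only the partner of $u$ changed from $v$ to $x$), the only new unmatched vertex is $v$ and the only changed edge is $\{u,x\}$, so I only have to verify $v\notin U_{M'}$ and that nothing in $\overline M\setminus\{x\}$ now has its neighborhood inside $\{u,x\}$. The first follows because $v$ would have had to be a twin of $x$ (both with neighborhood inside $\{u,v\}$, hence inside the original edge, forcing $N(v)\subseteq\{u,x\}$ is the wrong containment — I'd instead show $N(v)\not\subseteq\{u,x\}$ directly, using that $v$ had degree $\ge 2$ or using the distinguisher $w'$ of $\{x,v\}$). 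The second is a short case check again invoking twin-freeness. If the direct swap does not always work, the fallback is to set up a potential function such as $\sum_{x\in U_M}\deg(x)$ or to order the cases of Lemma \ref{neighbors} for the edge $e$ and handle each separately; I expect case~(2) or~(3) of that lemma to be the delicate one, since case~(1) edges are the most flexible to rematch.
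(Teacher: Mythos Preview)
Your proposal is correct and follows essentially the same approach as the paper: start from an arbitrary maximum matching, and for any $x\in U_M$ with $N(x)\subseteq e=\{u,v\}$ and $u\in N(x)$, swap $e$ for $\{u,x\}$ to obtain a maximum matching $M'$ with $U_{M'}=U_M\setminus\{x\}$, iterating until $U_M=\emptyset$. The paper carries out exactly the two checks you isolate (that $v\notin U_{M'}$ and that no $y\in\overline{M}\setminus\{x\}$ has $N(y)\subseteq\{u,x\}$), dispatching both directly via twin-freeness and Lemma~\ref{neighbors}; your detours through distinguishers $w,w'$ and a potential function are unnecessary but harmless.
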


\begin{proof}
Let $M$ be a maximum matching in $G$. Observe first that no two vertices $x,y\in U_M$ satisfy
$N(x),N(y)\subseteq e$ for any  $e\in M$ (otherwise Lemma~\ref{neighbors} yields $\delta(x)=\delta(y)=1$ and
$N(x)=N(y)$, which contradicts the fact that $G$ is twin-free).

If $U_M\neq \emptyset$ then let $x\in U_M$ and $e=\{u,v\}$ in $M$ with
$N(x)\subseteq e$. Thus, assuming that $u\in N(x)$, we claim that $M'=(M\setminus\{e\})\cup\{\{u,x\}\}$ is a maximum matching
in $G$ such that $U_{M'}=U_M\setminus\{x\}$. Clearly, $U_M\setminus\{x\}\subseteq U_{M'}$.
 Indeed, for every  $y\in U_M\setminus\{x\}\subseteq \overline M'$ there exists an edge $f\in M$ such that $N(y)\subseteq f$.
 As remarked above,  $f\neq e$ since $x\neq y$, which implies that $f\in (M\setminus e)\subseteq M'$ and so $y\in U_{M'}$.

We now prove that $U_{M'}\subseteq U_M\setminus\{x\}$. Let $y\in U_{M'}$ such that $N(y)\subseteq f$ for some $f\in M'$.
 If $f\in M\setminus\{e\}$ then $y\in U_M\setminus\{x\}$ (note that $y\neq v$ since
$v\in N(u)$ and so there is no $f\in M\setminus\{e\}$ with $N(v)\subseteq f$).
Otherwise, $f=\{u,x\}$. If $y\neq v$ then $y\notin N(x)$ since $\overline M$ is an independent set, and then
$N(y)=\{u\}$. However, $N(x)\subseteq e$ and so Lemma~\ref{neighbors} ensures that $N(x)=N(y)=\{u\}$; a contradiction since $G$ is twin-free.
Therefore, $y=v$ and we easily get either $N(v)=N(x)=\{u\}$ or
$N[v]=N[x]=\{u,v,x\}$; again a contradiction. Thus, we have proved that $U_{M'}=U_M\setminus\{x\}$ and iterating this process gives a maximum matching $M^*$ with $U_{M^*}=\emptyset$. Observe that     $M$ can be found in polynomial time \cite{edmonds} and
  $M^*$ is easily obtained from $M$ also in polynomial time. Hence, we can compute $M^*$ in polynomial time, as claimed.

\end{proof}

We now reach one of the main results of this section which relates $\alpha'(G)$ and $\lambda(G)$ when
$G$ is a twin-free graph in $\mathcal{C}_4$.

\begin{proposition}\label{lambdamatching}
Let $G\in\mathcal{C}_4$ be a twin-free graph of order $n\geq 4$. Then, there is
 a locating-dominating set of $G$ of cardinality $\alpha'(G)$ which can be computed in polynomial time, and consequently
 $$\lambda(G)\leq \alpha '(G).$$
In particular, $\lambda (G)\leq \frac{n}{2}$.
\end{proposition}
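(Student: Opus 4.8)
The plan is to take a maximum matching $M$ in $G$ with $U_M=\emptyset$, whose existence and polynomial-time computability are guaranteed by Lemma~\ref{UMempty}, and to build a locating-dominating set by choosing exactly one endpoint from each edge of $M$. Since $\overline{M}$ is an independent set (as $M$ is maximum) and $|M|=\alpha'(G)$, such a set $S$ has cardinality $\alpha'(G)$, and once we show $S$ is locating-dominating we get $\lambda(G)\le\alpha'(G)$; the final bound $\lambda(G)\le n/2$ then follows since $\alpha'(G)\le\lfloor n/2\rfloor$.

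First I would fix, for each edge $e=\{u,v\}\in M$, which endpoint to place in $S$ using the trichotomy of Lemma~\ref{neighbors}. In Case~1 ($N(u)\cap\overline{M}=N(v)\cap\overline{M}=\emptyset$) either choice is fine; in Case~2 I put into $S$ the endpoint that has a neighbour in $\overline{M}$; in Case~3 (both endpoints see a single common $x\in\overline M$) again either choice works, but I record that both $u$ and $v$ dominate $x$. The domination check is then: every $x\in\overline{M}$ has a neighbour in $M$ (else $M$ is not maximum, or $x$ would be isolated—use $n\ge4$ and connectedness implicitly via twin-freeness to rule out isolated vertices), and the endpoint selection guarantees that neighbour can be taken inside $S$; vertices of $S$ are dominated trivially. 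The condition $U_M=\emptyset$ is exactly what prevents an $x\in\overline M$ from having all its neighbours be the two endpoints of one matching edge with neither selected—so $S$ dominates $\overline M$.

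The substantive part is showing $S$ is a distinguishing set, i.e. every pair $\{x,y\}$ of distinct vertices is distinguished by some vertex of $S$. I would split into cases by how $x,y$ sit relative to $M$. If at least one of them, say $x$, lies in $\overline M$, then $x\notin U_M$ means $x$ has a neighbour $w$ outside the single edge covering it—more usefully, $x$ has either two neighbours in distinct matching edges or a neighbour with a selected partner; combined with $G\in\mathcal{C}_4$ (no $K_{2,2}$ subgraph) one argues that $x$ and $y$ cannot share all their $S$-neighbours: if $s\in S$ with $s\in N(x)\cap N(y)$ and also a second common $S$-neighbour existed we would get a $C_4$, so at most one common $S$-neighbour, and $x$'s having a private distinguishing neighbour in $S$ finishes it. If both $x,y$ are matched, say $x\in e$, $y\in f$: when $x\in S$ or $y\in S$ the pair is distinguished by that vertex itself; when neither is in $S$, let $x'$ be the partner of $x$ (so $x'\in S$) and $y'$ the partner of $y$ (so $y'\in S$)—then $x'$ distinguishes $\{x,y\}$ unless $x'\sim y$, and $y'$ distinguishes it unless $y'\sim x$; if both adjacencies held, $\{x,y,x',y'\}$ would contain a $C_4$ (using $x\sim x'$, $x'\sim y$, $y\sim y'$, $y'\sim x$), contradicting $G\in\mathcal{C}_4$. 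So in every case some vertex of $S$ distinguishes $\{x,y\}$, hence $S$ is a distinguishing set, and together with domination, $S$ is locating-dominating.

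The main obstacle I expect is the bookkeeping in the distinguishing argument when $x$ or $y$ is in the matched part but its matched partner happens to be adjacent to the other vertex: the clean resolution is precisely to invoke the $C_4$-freeness to forbid the simultaneous ``bad'' adjacencies, so the delicate point is making sure the four vertices involved are genuinely distinct and genuinely form a $4$-cycle (for instance in Case~3 edges, where $x'$ and $y'$ might coincide with a shared $\overline M$-neighbour's role)—that is where Lemma~\ref{neighbors} and $U_M=\emptyset$ must be used carefully to exclude degenerate configurations. Everything else is routine, and the polynomial-time claim follows since $M$ and the endpoint selection are each computable in polynomial time by Lemma~\ref{UMempty}.
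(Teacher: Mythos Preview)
Your proposal is correct and follows essentially the same approach as the paper: take a maximum matching $M$ with $U_M=\emptyset$ via Lemma~\ref{UMempty}, place into $S$ the endpoint of each matching edge that has an $\overline{M}$-neighbour (the Case~2 choice of Lemma~\ref{neighbors}), and use $C_4$-freeness to verify that $S$ is locating-dominating. The paper streamlines your case ``$x\in\overline{M}$'' by first observing that every such $x$ is $2$-dominated by $S$ (since $U_M=\emptyset$ forces $N(x)$ to meet two matching edges, and the selection rule puts the right endpoint of each into $S$) and then invoking Lemma~\ref{kdom} directly---this is exactly the argument you sketch inline, so making the $2$-domination explicit would tighten your writeup.
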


\begin{proof}
Let $M$ be a maximum matching in $G$ satisfying that $U_M=\emptyset$, which exists by Lemma~\ref{UMempty}.
We consider a partition $V(G)=V_1\cup V_2\cup \overline M$ such that $e\cap V_1$ and $e\cap V_2$ are non-empty for every $e\in M$, i.e.,
 $V_1$ and $V_2$ contain the endpoints of every $e\in M$, respectively.
 By Lemma~\ref{neighbors}, for every $e=\{u,v\}$ in $M$ and $x\in\overline M$ with $N(x)\cap e=\{u\}$,
 we can assume without loss of generality that  $u\in V_1$.
 This means that every $e=\{u,v\}$ in $M$ so that $N(u)\cap \overline M\neq \emptyset$ and $N(v)\cap M=\emptyset$
 satisfies $u\in V_1$.
Thus, we  shall prove that $V_1$ is a locating-dominating set of $G$.

It is easy to check that $V_1$ is a dominating set of $G$ by construction of $V_1$ and $V_2$.
 Furthermore, every  $x\in\overline M$ is 2-dominated by $V_1$. Indeed, $N(x)$ intersects
at least two different edges of $M$ since $U_M=\emptyset$.
Thus, let $u,u'\in N(x)$ with $\{u,v\},\{u',v'\}\in M$ for some $v,v'\in V(G)$. Since we can suppose $u,u'\in V_1$,
 we have that  $x$ is 2-dominated by $\{u,u'\}\subseteq V_1$.

To prove that $V_1$ is a distinguishing set of $G$, we claim that every pair $\{x,y\}\subseteq V_2\cup \overline M$
 is distinguished by some $u\in V_1$. By Lemma~\ref{kdom}, we
can assume that $x,y\in V_2$ since every vertex of $\overline{M}$
is 2-dominated by $V_1$ and $G\in\mathcal{C}_4$. Thus, let $u,u'\in V_1$ such that $\{u,x\},\{u',y\}\in M$.
Hence, one of $u$ or $u'$ resolves $\{x,y\}$ since otherwise $u\in N(y)$ and $u'\in N(x)$, which produces
 the cycle $(u,x,u',y)$; a contradiction with $G\in\mathcal{C}_4$. Therefore, we have proved that $V_1$ is a locating-dominating set of $G$
 (obtained in polynomial time by Lemma~\ref{UMempty})
  and so $\lambda (G)\leq |V_1|=\alpha'(G)\leq \frac{n}{2}$, as claimed.
\end{proof}

As an application of this last result, we next compute the exact value of $(\lambda-\dete)_{|_{{\mathcal{C}_4}}}(n)$ and
give bounds on $(\dim-\dete)_{|_{{\mathcal{C}_4}}}(n)$, supporting again  the validity of Conjecture~\ref{twin_free_conj}.

\begin{theorem} \label{C4lambda-det}
For every $n\geq14$, it holds that
$$(\lambda-\dete)_{|_{{\mathcal{C}_4}}}(n)=\lfloor\frac{n}{2}\rfloor.$$
\end{theorem}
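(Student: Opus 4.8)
The plan is to prove the theorem by establishing the two inequalities $(\lambda-\dete)_{|_{\mathcal{C}_4}}(n)\leq\lfloor\frac{n}{2}\rfloor$ and $(\lambda-\dete)_{|_{\mathcal{C}_4}}(n)\geq\lfloor\frac{n}{2}\rfloor$ separately. For the upper bound, I would follow the same scheme used in Theorem~\ref{lambda-deten}: starting from an arbitrary graph $G\in\mathcal{C}_4$ realizing the maximum of $\lambda(G)-\dete(G)$, pass to the associated twin-free graph $\widetilde G$. The key point to check is that $\widetilde G$ also belongs to $\mathcal{C}_4$: since $\widetilde G$ is built from the twin graph $G^*$ by attaching pendant vertices, and $G^*$ is (up to identification of twins) a subgraph-like contraction of $G$, one must verify that no copy of $C_4$ is created. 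Then Lemma~\ref{lambdatilde} gives $\lambda(G)-\dete(G)\leq\lambda(\widetilde G)$, and since $\widetilde G$ is a twin-free graph in $\mathcal{C}_4$, Proposition~\ref{lambdamatching} applies to yield $\lambda(\widetilde G)\leq\lfloor\widetilde n/2\rfloor\leq\lfloor n/2\rfloor$. Care is needed when $G^*\cong K_2$, but as in the proof of Theorem~\ref{lambda-deten} that case is dispatched by the lower bound of Theorem~\ref{lowerbound}.

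For the lower bound, I would check that the extremal graphs from Section~\ref{sec:lowerbound} witnessing Theorem~\ref{lowerbound} for the function $(\lambda-\dete)(n)$ — namely $G_{\frac n2-1}$ when $n$ is even and $H_{\frac{n-1}2-1}$ when $n$ is odd — actually lie in $\mathcal{C}_4$. This is plausible because $G_r$ and $H_r$ are coronas of trees with a few extra pendant vertices, hence have no cycles at all and in particular no $C_4$ as a subgraph. If that holds, then Lemma~\ref{G_r} already computes $\lambda$ and $\dete$ for these graphs, giving $\lambda(G)-\dete(G)=\lfloor n/2\rfloor$, so these graphs certify $(\lambda-\dete)_{|_{\mathcal{C}_4}}(n)\geq\lfloor n/2\rfloor$.

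I expect the main obstacle to be the verification that the twin-free reduction $\widetilde G$ stays inside $\mathcal{C}_4$. One has to argue that contracting twin classes of $G$ (which are cliques or independent sets) cannot produce a new $4$-cycle: if $(a^*,b^*,c^*,d^*)$ were a $C_4$ in $G^*$, then by Statement~\ref{edges} of Lemma~\ref{lemmaextremal} every choice of representatives gives edges in $G$ between the respective classes, and one should be able to pick four distinct vertices of $G$ forming a $C_4$ — unless some of the classes coincide, which the cycle structure forbids, or are of type $(K)$, in which case the induced clique already contains (together with a neighbor) a $C_4$ once the class has size $\geq 2$, again contradicting $G\in\mathcal{C}_4$. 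The pendant vertices attached to form $\widetilde G$ obviously create no cycles. Once this structural fact is in place, the rest of the argument is a direct assembly of Lemma~\ref{lambdatilde}, Proposition~\ref{lambdamatching}, and the lower-bound construction, exactly paralleling the proof of Theorem~\ref{lambda-deten}.
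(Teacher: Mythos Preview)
Your proposal is correct and follows essentially the same route as the paper: the lower bound comes from observing that the graphs $G_r$ and $H_r$ of Section~\ref{sec:lowerbound} are trees and hence lie in $\mathcal{C}_4$, while the upper bound is obtained by passing to $\widetilde G$, invoking Lemma~\ref{lambdatilde} and Proposition~\ref{lambdamatching}, and handling the $G^*\cong K_2$ case via Lemma~\ref{detn-r}. The only difference is that the paper dismisses the fact that $\widetilde G\in\mathcal{C}_4$ as ``easily seen'' whereas you spell it out; your argument there is sound (any $C_4$ in $G^*$ lifts to one in $G$ by choosing representatives, and pendant vertices create no cycles), though the digression about type~$(K)$ classes is unnecessary.
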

\begin{proof}

Mimicking the proof of Theorem~\ref{lowerbound} on $(\lambda-\dete)(n)$
 yields  $(\lambda-\dete)_{|_{{\mathcal{C}_4}}}(n)\geq\lfloor\frac{n}{2}\rfloor$
  since the graphs considered   belong to $\mathcal{C}_4$. To prove the reverse inequality,
  it suffices to show that every graph $G\in \mathcal{C}_4$ of order $n$ satisfies $\lambda(G)-\dete(G)\leq \frac{n}{2}$.
Indeed, let us assume first that $G^*\not\cong K_2$ (otherwise $\lambda(G)-\dete(G)\leq 1<\frac{n}{2}$ by
 Lemma~\ref{detn-r}). Thus, the graph $\widetilde{G}$ described
in Section~\ref{sec:twin-free} satisfies $\lambda(G)-\dete(G)\leq \lambda (\widetilde{G})$, by Proposition~\ref{lambdatilde}.
  Also, Lemma~\ref{Gtilde} guarantees that $\widetilde G$ is twin-free and has order $\widetilde n\leq n$.
  Hence, Proposition~\ref{lambdamatching} gives $\lambda (\widetilde{G})\leq\frac{\widetilde{n}}{2}\leq \frac{n}{2}$
  since it is easily seen that $\widetilde G\in\mathcal{C}_4$.
  Therefore,  we have proved that  $\lambda(G)-\dete(G)\leq \lambda (\widetilde{G})\leq\frac{n}{2}$, as required.
\end{proof}

\begin{theorem} \label{C4beta-det}
For every $n\geq 49$, it holds that
$$\lfloor\frac{2}{7}n\rfloor\leq(\dim-\dete)_{|_{{\mathcal{C}_4}}}(n)\leq \lfloor\frac{n}{2}\rfloor.$$
\end{theorem}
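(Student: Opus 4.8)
The upper bound is immediate. Since $\dim(G)\le\lambda(G)$ for every graph $G$, taking the maximum over all $G\in\mathcal{C}_4$ of order $n$ gives $(\dim-\dete)_{|_{\mathcal{C}_4}}(n)\le(\lambda-\dete)_{|_{\mathcal{C}_4}}(n)$, and by Theorem~\ref{C4lambda-det} the right-hand side equals $\lfloor n/2\rfloor$. So the whole task is the lower bound: for each $n\ge49$ I would exhibit one graph $G_n\in\mathcal{C}_4$ of order $n$ with $\dim(G_n)-\dete(G_n)\ge\lfloor\frac{2}{7}n\rfloor$, just as the lower bounds of Section~\ref{sec:lowerbound} are obtained from explicit families.

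The graph $G_n$ will be a \emph{tree}, hence trivially $C_4$-free. The basic building block is a path $w_1,\dots,w_m$ (the spine) to each of whose vertices $w_i$ one attaches three pendant paths (legs) of lengths $1$, $2$ and $3$; call the result $R_m$, which has $7m$ vertices. By the well-known formula for the metric dimension of a tree (the sum, over all exterior major vertices, of the number of legs at that vertex minus one), and since the exterior major vertices of $R_m$ are precisely $w_1,\dots,w_m$, each carrying exactly three legs, we get $\dim(R_m)=2m$. Because the three legs at every $w_i$ have pairwise distinct lengths, an automorphism fixing $w_i$ fixes those legs setwise by length, hence pointwise (each leg being a rigid rooted path), so the only nontrivial automorphism of $R_m$ is the spine reflection $w_i\mapsto w_{m+1-i}$; thus $\aut(R_m)$ has order $2$ and $\dete(R_m)\le1$. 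This already indicates that the leg pattern $1,2,3$ is the right gadget: three legs of minimal total length give per-block ratio $\frac{2}{7}$, which beats both two legs of lengths $1,2$ (ratio $\frac{1}{4}$) and four-or-more distinct legs (e.g. $1,2,3,4$ gives $\frac{3}{11}<\frac{2}{7}$).

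It then remains to (a) kill the spine reflection and (b) tune the order to an arbitrary $n\ge49$, by small surgery at $w_1$ that neither decreases $\dim$ nor raises $\dete$ above the value permitted. Write $n=7m+j$ with $0\le j\le6$; since $n\ge49$ we have $m=\lfloor n/7\rfloor\ge7$, which is exactly what makes the exterior-major-vertex count behave uniformly. For $j\in\{1,2,3\}$ I would lengthen one leg at $w_1$ by $j$: this destroys the reflection (so $\dete=0$), keeps $w_1$ with three legs ($\dim=2m$), and $\lfloor\frac{2}{7}n\rfloor=2m$. For $j\in\{4,5,6\}$ I would give $w_1$ a fourth leg and, if needed, lengthen it, so that $w_1$ has four legs of pairwise distinct lengths using exactly $j$ extra vertices: then $\dim=2m+1=\lfloor\frac{2}{7}n\rfloor$ and the tree is rigid, $\dete=0$. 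For $j=0$ I would replace the legs at $w_1$ by four legs of lengths $1,1,2,3$ (still $7$ vertices at that block): this makes $\dim=2m+1$ but introduces one twin pair, so $\dete=1$ and $\dim-\dete=2m=\lfloor\frac{2}{7}n\rfloor$. In every case $G_n$ is a tree of order $n$ with $\dim(G_n)-\dete(G_n)\ge\lfloor\frac{2}{7}n\rfloor$, which together with the upper bound finishes the proof.

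The main obstacle is not conceptual but a careful case check: rechecking the tree-dimension formula and full rigidity for each of the modified trees and each residue $j$, and confirming that $m\ge7$ (equivalently $n\ge49$) is precisely what removes small exceptions. A secondary point worth double-checking is the optimality intuition behind the constant $\frac{2}{7}$, namely that no rigid (or almost-rigid) $C_4$-free gadget beats the $1,2,3$ leg pattern per vertex; this is not needed for the theorem as stated but clarifies why $\frac{2}{7}$ is the natural target here.
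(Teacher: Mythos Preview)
Your upper bound and overall lower-bound strategy coincide with the paper's: both exhibit trees built from a spine carrying, at each spine vertex, a gadget with three pendant paths of distinct lengths $1,2,3$, yielding per-block contribution $2$ to $\dim$ and $0$ to $\dete$. The paper's one structural difference is that its spine is the asymmetric tree $T_{q-1}$ (a path on $q-1$ vertices with an extra pendant at $u_3$), not a bare path; this makes the whole tree rigid from the outset, so $\dete=0$ uniformly and no endpoint surgery is needed. Your modifications for the residues $j\in\{1,\dots,6\}$ are correct and give exactly $\dim-\dete=\lfloor 2n/7\rfloor$.

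Your $j=0$ case, however, does not work as written. Four legs of lengths $1,1,2,3$ contribute $1+1+2+3=7$ leg vertices, so together with $w_1$ that block has $8$ vertices, not $7$; the tree you describe therefore has order $7m+1$, not $7m$. More to the point, no local repair on a path spine succeeds here: with identical leg profile $\{1,2,3\}$ at every $w_i$ the reflection forces $\dete(R_m)=1$, and any rearrangement of the legs at $w_1$ that keeps the block at $7$ vertices either reduces the number of legs (losing one unit of $\dim$) or introduces at least as many twin pairs as it gains in $\dim$ (since three distinct positive integers summing to $6$ must be $1,2,3$, and four positive integers summing to $6$ cannot be distinct). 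In every such variant one ends up with $\dim-\dete\le 2m-1$. The paper sidesteps this entirely by taking the spine to be $T_{q-1}$, which has $q$ vertices and trivial automorphism group once $q\ge 7$---exactly the threshold $n\ge 49$. Within your framework the equivalent fix is to make the spine a rigid tree on $m$ vertices (e.g.\ hang one of the $m$ standard blocks off $w_1$ via a new branching vertex rather than placing it at the end of the path): this keeps $7m$ vertices, $\dim=2m$, and $\dete=0$.
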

\begin{proof}
The upper bound follows immediately from Expression (\ref{original}) and Theorem~\ref{C4lambda-det}. For the lower bound, we shall construct a graph  $G$ of order $n$ not containing $C_4$ as a subgraph such that $ (\dim-\dete)(G)=\lfloor\frac{2}{7}n\rfloor$.
Indeed, let $n=7q+s$ for some integers $q,s$ with $q\geq 7$ and $0\leq s<7$. The graph $T_{q,0}$ is given by attaching a copy
of $T_6$ to every vertex of $T_{q-1}$ as
 shown in Figure~\ref{Tqs}(a) (recall that  the tree $T_r$ is described in Section~\ref{sec:lowerbound}). If $s\in\{1,2,3\}$ then $T_{q,s}$ is
obtained from $T_{q,0}$ by replacing the edge $\{u_1,u_2\}$ by a path of length $s+1$ (see Figure~\ref{Tqs}(b)). Otherwise,
$s\in\{4,5,6\}$ and $T_{q,s}$ comes from $T_{q,0}$  by attaching a path of length $s$ to $u_1$ (see Figure~\ref{Tqs}(c)).
 It is clear that $aut(T_{q,s})=id_{T_{q,s}}$, which implies that $\dete (T_{q,s})=0$. Further, the metric bases illustrated in
Figure~\ref{Tqs} (see Section~\ref{sec:trees} for more information on metric dimension of trees) show that
$$\dim (T_{q,s})= \left\{ \begin{array}{lcc}
                      2q  &  if & s\in\{0,1,2,3\} \\
                     \\ 2q+1 &   if  & s\in\{4,5,6\}
             \end{array}
   \right.$$
But  $n=|V(T_{q,s})|$ and so $\dim (T_{q,s})=\lfloor\frac{2}{7}n\rfloor$.
Therefore, setting $G\cong T_{q,s}$ yields the expected bound.

\end{proof}

\begin{figure}[ht]
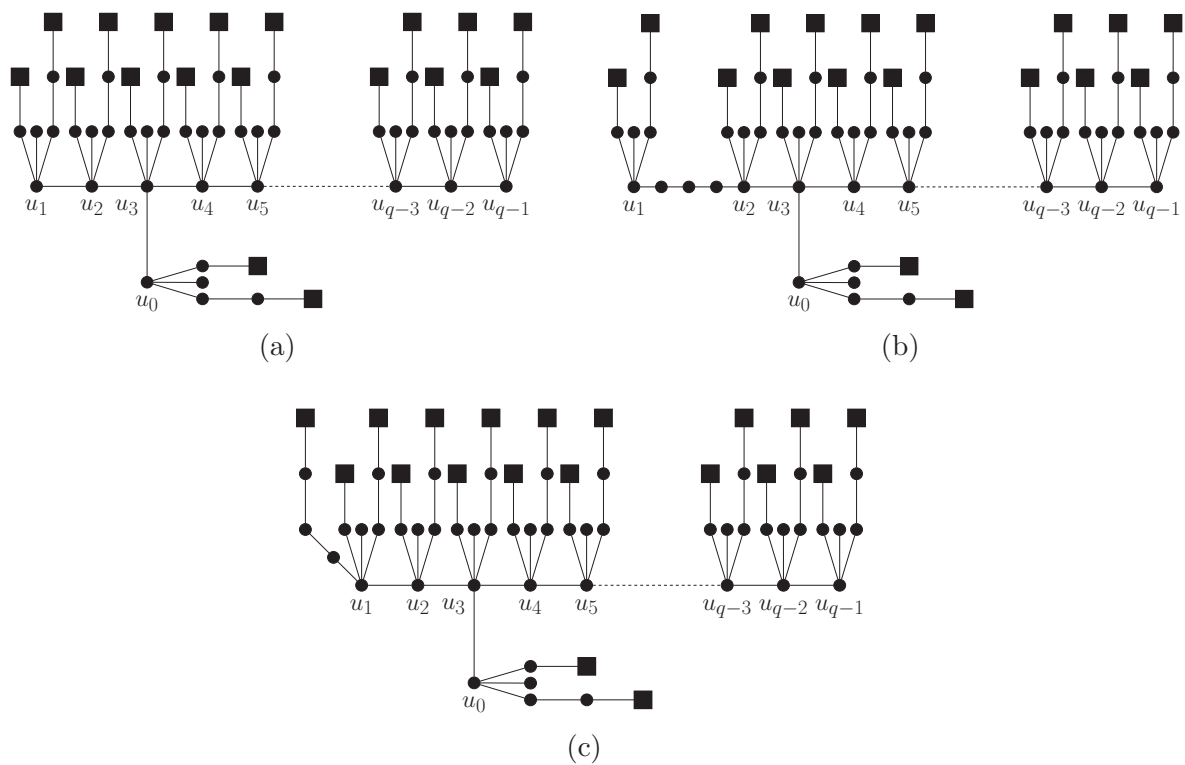

\begin{center}

\begin{tabular}{ccc}
\includegraphics[width=70mm]{Tq0}
 &&
\includegraphics[width=77mm]{Tq3}\\
(a)&  & (b)  \\
\\
\end{tabular}

\begin{tabular}{c}

\includegraphics[width=77mm]{Tq5}\\

   (c)     \\

\end{tabular}
\end{center}
\caption{Metric bases (depicted as square vertices) of the graphs (a) $T_{q,0}$, (b) $T_{q,3}$ and (c) $T_{q,5}$.}\label{Tqs}
\end{figure}

\section{Computing the functions restricted to trees}\label{sec:trees}

Cáceres et al.~\cite{cagapuse} started the study of the difference between the determining number and the metric dimension
 of trees when trying to answer the question raised by Boutin~\cite{boutin}. Actually,
they constructed a family of trees where this difference is $\Omega(\sqrt n)$.
This section completely solves this particular problem showing
 that the   trees $T_{q,s}$ described in the preceding section have  the maximum value
of $\dim(G)-\dete (G)$ in the class of trees (see Theorem~\ref{beta-detetrees}). Moreover, we compute the
  maximum value of $\lambda(G)-\dete(G)$ restricted to trees (see Theorem~\ref{lambda-detetrees}).

Some  of the terminology that we adopt in this section can be found in \cite{chartrand}.
Given a tree $T$, a vertex of degree at least 3 is called a {\it major vertex} of $T$.
A pendant vertex $\ell$ is a {\it terminal vertex} of a major vertex $u$ if
the major vertex closest to $\ell$ in $T$ is $u$.
 The {\it terminal degree} of a major vertex $u$, denoted by $ter(u)$, is the number of terminal vertices of $u$.
 A major vertex $u$ is an
{\it exterior major vertex} of $T$ if it has positive terminal degree in $T$.
The set of exterior major vertices of $T$ is denoted by $Ex(T)$.

 The metric dimension of any tree is well-known
(see for instance \cite{chartrand,melter,landmarks,slater}) and its formula is exhibited next.

\begin{proposition}{\rm \cite{landmarks}}\label{betatrees}
If $T$ is a tree that is not a path, then
$$\dim (T)=\sum_{u\in Ex(T)} (ter(u)-1).$$
\end{proposition}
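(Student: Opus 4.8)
The formula for $\dim(T)$ when $T$ is a tree is a classical result, so my plan is to reconstruct the standard argument, which proceeds by establishing matching lower and upper bounds in terms of the exterior major vertices. First I would fix notation: for each exterior major vertex $u \in Ex(T)$, let $\ell_1, \ldots, \ell_{ter(u)}$ be its terminal vertices, and for each terminal vertex $\ell_i$ let $P_i$ denote the path (the ``leg'') from $u$ out to $\ell_i$. The key structural observation is that two vertices lying on the same leg $P_i$ of the same exterior major vertex are resolved by any vertex whose shortest path to them passes through the ``far'' end of that leg, but two distinct legs $P_i, P_j$ of the same $u$ are \emph{symmetric} from the point of view of every vertex outside $P_i \cup P_j$: a vertex $w \notin P_i \cup P_j$ has $d(w, x) = d(w, u) + d(u, x)$ for $x$ on either leg, so the distance depends only on the distance-to-$u$, not on which leg. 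Hence for each pair of legs of a common $u$, at least one of the two legs must contain a resolving vertex.

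For the \textbf{lower bound}, I would argue that a resolving set $S$ must, for each $u \in Ex(T)$, intersect all but at most one of the $ter(u)$ legs of $u$: if two legs $P_i, P_j$ were both disjoint from $S$, then the leaf $\ell_i$ and the vertex on $P_j$ at distance $1$ from $u$ (or $\ell_j$ itself, choosing the pair at equal distance from $u$) would be unresolved by every vertex of $S$, since all of $S$ lies outside $P_i \cup P_j$. This forces $|S| \ge \sum_{u \in Ex(T)}(ter(u) - 1)$, provided one checks that legs belonging to distinct exterior major vertices are internally disjoint — which holds because a terminal vertex is assigned to the unique closest major vertex, so the leg from $u$ to $\ell_i$ meets no other exterior major vertex except at $u$ itself. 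A small amount of care is needed to handle the possibility that $S$ contains the exterior major vertex $u$ itself or an internal (non-exterior) vertex, but such vertices do not help resolve a pair of legs of $u$, so the counting is unaffected.

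For the \textbf{upper bound}, I would exhibit an explicit resolving set of the claimed size: for each $u \in Ex(T)$ pick any $ter(u) - 1$ of its terminal vertices and put them into $S$. I then need to verify $S$ resolves every pair $\{x, y\}$. Decompose $T$ by removing all the chosen legs; any two vertices on different legs of the same $u$, or on a chosen leg versus anywhere else, are separated because one of the chosen leaves ``sees'' one but is farther from / closer to the other in a way dictated by the leg structure; the remaining pairs lie in the ``spine'' part of the tree where one argues that the leaves of $S$ hanging off distinct exterior major vertices suffice — here I would invoke the fact that a non-path tree has at least two exterior major vertices and that distances from leaves in two different ``directions'' determine a vertex's position uniquely (this is essentially the tree-metric / four-point condition). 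The verification that $S$ is resolving is the most tedious part and where I would expect to spend the most effort; the cleanest route is probably a case analysis on whether $x$ and $y$ lie on chosen legs, unchosen legs, or the spine, reducing each case to the observation that a single well-placed leaf resolves it. Since the statement is quoted from \cite{landmarks}, I would, if permitted, simply cite that reference rather than reproving it; absent that, the sketch above is the route I would follow.
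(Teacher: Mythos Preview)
The paper does not prove this proposition at all: it is stated with the citation \cite{landmarks} and used as a black box. Your final remark --- ``if permitted, simply cite that reference rather than reproving it'' --- is exactly what the paper does, so in that sense your proposal matches the paper's approach.

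Since you also sketched the classical argument, a brief comment on it: the lower and upper bound strategy you outline is the standard one and is essentially correct. One small slip to watch for if you ever write it out in full: it is not true that a non-path tree has at least two exterior major vertices (a star, or more generally any spider, has exactly one). Your upper-bound verification for the ``spine'' therefore cannot rely on having leaves of $S$ hanging off two distinct exterior major vertices; instead one argues directly that, for any pair $x,y$ in the tree, some chosen terminal leaf $\ell$ has its path to $x$ and its path to $y$ diverging before reaching both, which forces $d(\ell,x)\neq d(\ell,y)$. This works even when $|Ex(T)|=1$. Apart from that, your sketch is sound.
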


First, we provide two technical lemmas which aid in proving Theorem~\ref{beta-detetrees}.
We denote by $ter'(u)$ the  number of different distances between $u$ and any of its terminal vertices.
For every $u\in Ex(T)$,  we write $N_u$  for the set of vertices in some $u$-$\ell$ shortest path, where $\ell$ is
a terminal vertex of $u$; the cardinality of $N_u$ is denoted by $n_u$.

\begin{lemma}\label{ter'}
Let $T$ be a tree and $u\in Ex(T)$. Then, $ter'(u)\leq \frac{2}{7}n_u+1$.
\end{lemma}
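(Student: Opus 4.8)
The statement bounds $ter'(u)$, the number of distinct distances from $u$ to its terminal vertices, by $\frac{2}{7}n_u + 1$, where $n_u$ is the number of vertices on a shortest $u$–$\ell$ path (for $\ell$ a terminal vertex of $u$). The natural approach is to think of $u$ as the root of the subtree $T_u$ consisting of all the legs of $u$ (the union of the $u$–$\ell$ shortest paths over terminal vertices $\ell$ of $u$), and to ask: how few vertices can this rooted structure have if it is required to realize $m := ter'(u)$ distinct distance values to its terminal vertices? Relabel the distinct distances as $d_1 < d_2 < \cdots < d_m$, so $d_m \geq m$ and in fact $d_i \geq i$. A leg realizing distance $d_i$ contributes $d_i$ vertices besides $u$, but legs can share an initial segment, so a crude $\sum d_i$ count is too wasteful. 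Instead I would exploit that $u$ is a major vertex — it has terminal degree at least $3$ if it has terminal degree at least $2$ is not quite right, but $ter(u)\geq 1$ and being a major vertex means $\delta(u)\geq 3$ — so there is a lot of branching forced near $u$. Actually the key structural fact is cleaner: since $n_u$ counts the vertices of just \emph{one} shortest leg, and the legs realizing the $m$ distinct distances must branch off from the common path to $u$, I should set up a careful accounting of how branch points and the forced degree-$\geq 3$ condition at major vertices spend vertices.

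Concretely, here is the accounting I would carry out. Order the legs $L_1,\dots,L_m$ realizing distances $d_1<\cdots<d_m$ (pick one leg per distinct distance). These legs, together with $u$, form a subtree; let $P$ be its set of \emph{non-terminal, non-$u$} internal vertices that lie on $L_m$, and note $n_{u} = |V(L_m)| = d_m + 1$. The point is to bound $d_m$ from below in terms of $m$ using the obstruction that the tree $T$ — hence $T_u$ — has every major vertex of degree $\geq 3$, so whenever two legs split, the split happens at a major vertex, and intermediate vertices on a leg that are \emph{not} split points have degree $2$. I expect the extremal configuration to look exactly like the gadget $T_6$ from Section~\ref{sec:lowerbound}: a length-$6$ piece contributing one new realized distance every $7$ vertices, which is where the constant $\frac{2}{7}$ comes from (two endpoints of distances per $7$ vertices in the doubled count, matching the $\frac{2}{7}n$ appearing throughout the paper). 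So the plan is: show each "new distance value'' beyond the first costs, on average, at least $\frac{7}{2}$ vertices of $n_u$, i.e. $n_u \geq \frac{7}{2}(m-1) + (\text{base cost})$, and check the base case/small cases directly to land on $ter'(u) = m \leq \frac{2}{7}n_u + 1$.

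The main obstacle is making the "$\frac{7}{2}$ vertices per new distance'' bound rigorous rather than heuristic: legs can share vertices, so I cannot simply add up their lengths, and I must track carefully which vertices are "charged'' to which realized distance so that no vertex is charged twice. I would handle this by processing the distinct distances $d_1 < d_2 < \cdots < d_m$ in increasing order and, at step $i\geq 2$, identifying a \emph{private} set of at least $\lceil 7/2 \rceil$ or so vertices associated to the $i$-th leg — specifically the portion of $L_i$ strictly beyond the branch point where it last agrees with $L_{i-1}$ (or with the union $L_1\cup\cdots\cup L_{i-1}$), combined with using that both the branch point and the far pendant end force extra structure; the degree-$2$ condition on non-major, non-pendant vertices prevents the legs from being too "close together.'' If a clean per-step charge of $\tfrac72$ proves awkward, the fallback is to phrase everything in terms of $T_6$-blocks: argue that realizing $m$ distinct distances forces $T_u$ to contain a chain of roughly $m$ nested $T_6$-like pieces along its longest leg, each of length $\geq 7$ in the doubled metric, and then a short direct computation gives $n_u \geq \frac72 m - c$ for a small explicit constant $c$, which rearranges to the claimed inequality; the boundary values of $s$ in the range $0\le s<7$ would be checked by hand exactly as in the proof of Theorem~\ref{C4beta-det}.
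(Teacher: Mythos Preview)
Your proposal rests on a misreading of $n_u$. You take $N_u$ to be the vertex set of a single leg (``note $n_u = |V(L_m)| = d_m+1$''), but the paper defines $N_u$ as the set of vertices lying on \emph{some} $u$--$\ell$ shortest path as $\ell$ ranges over all terminal vertices of $u$; that is, $N_u$ is the union of all the legs. With your reading the lemma is simply false: if $u$ has terminal vertices at distances $1,2,\dots,10$ then $ter'(u)=10$, while the longest leg has only $11$ vertices, and $10 > \tfrac{2}{7}\cdot 11 + 1$.

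There is also a structural point you have backwards. You worry that ``legs can share an initial segment, so a crude $\sum d_i$ count is too wasteful,'' but in fact two legs from $u$ to distinct terminal vertices of $u$ meet \emph{only} at $u$: if they shared a further vertex $w$, then $w$ would have degree at least $3$ in $T$, hence be a major vertex strictly closer to those terminal vertices than $u$, contradicting the definition of terminal vertex. Once you see this, the proof collapses to three lines: picking one leg per distinct distance gives
\[
n_u \;\ge\; 1 + \sum_{i=1}^{m} d_i \;\ge\; 1 + \sum_{i=1}^{m} i \;=\; 1 + \frac{m(m+1)}{2},
\]
where $m = ter'(u)$; solving the quadratic yields $m \le \tfrac{1}{2}\bigl(\sqrt{8n_u-7}-1\bigr)$, and the elementary inequality $\tfrac{1}{2}\bigl(\sqrt{8n_u-7}-1\bigr) \le \tfrac{2}{7}n_u + 1$ (equivalent to $(n_u-7)^2 \ge 0$) finishes it. No charging scheme, no $T_6$-blocks, no case analysis is needed.
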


\begin{proof}
Let $d_1,d_2,...,d_{ter'(u)}$ with $d_1<d_2<...<d_{ter'(u)}$ be the different distances between $u$ and any of its terminal vertices.
Thus, $\displaystyle n_u\geq (\sum_{i=1}^{ter'(u)}d_i)+1$, and consequently
$$n_u\geq (\sum_{i=1}^{ter'(u)}i)+1=\frac{ter'(u)(ter'(u)+1)}{2}+1.$$
Hence, an easy computation shows that $ter'(u)\leq \frac{\sqrt{8n_u-7}-1}{2}\leq \frac{2}{7}n_u+1$.
\end{proof}

\begin{lemma}\label{detetrees}
Let $T$ be  a tree that is not a path. Then,  $$\dete (T)\geq \sum_{u\in Ex(T)}(ter(u)-ter'(u)).$$

\end{lemma}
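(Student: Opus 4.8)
The plan is to show that any determining set $S$ of $T$ must, for each exterior major vertex $u$, fix all but at most $ter'(u)$ of the terminal vertices of $u$. The intuition is that two terminal vertices $\ell_1,\ell_2$ of the same major vertex $u$ that lie at the \emph{same} distance from $u$ are ``exchangeable'': there is an automorphism of $T$ swapping the two paths from $u$ to $\ell_1$ and from $u$ to $\ell_2$ (of equal length) and fixing everything else. So if $S$ misses both such terminal paths, that automorphism is a nontrivial element of $\stab(S)$.

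First I would fix $u\in Ex(T)$ and partition its $ter(u)$ terminal vertices according to their distance from $u$; there are exactly $ter'(u)$ distance-classes. For a class containing terminal vertices $\ell_1,\dots,\ell_m$ (all at distance $d$ from $u$), consider the subtrees hanging off $u$ that contain each $\ell_j$: each is a path $u$-$\ell_j$ of length $d$ (no branching, since $u$ is the major vertex closest to each $\ell_j$, and $\ell_j$ is pendant). Any permutation of these $m$ paths extends to an automorphism of $T$ fixing $u$ and everything outside these paths. Hence a determining set must contain at least one \emph{internal} vertex (equivalently, at least one vertex other than allowing a swap) — more precisely, to kill all such transpositions within the class, $S$ must meet all but at most one of these $m$ paths; but actually we only need the count: within each distance-class, $S$ fixes all but at most one terminal vertex, so across all $ter'(u)$ classes, $S$ fails to ``cover'' at most $ter'(u)$ terminal vertices of $u$. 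Therefore $S$ contributes at least $ter(u)-ter'(u)$ vertices that are ``attached to'' $u$'s terminal paths.

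The remaining step is to argue these contributions are disjoint for different exterior major vertices, so they sum: the terminal paths of distinct exterior major vertices $u\ne u'$ are vertex-disjoint (a vertex on a $u$-$\ell$ shortest path with $\ell$ terminal for $u$ cannot also lie on such a path for $u'$, since the major vertex closest to it is $u$). This gives $\dete(T)=|S|\geq\sum_{u\in Ex(T)}(ter(u)-ter'(u))$, as claimed.

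The main obstacle I anticipate is making precise the claim ``$S$ fixes all but at most one terminal vertex in each distance-class.'' One must be careful that placing a single vertex of $S$ somewhere on one of the $m$ equal-length paths really does destroy \emph{all} nontrivial permutations of that class — it does, because any permutation moving the marked path is non-identity on $S$, and the symmetric group on the remaining paths acts freely enough; one should phrase it as: if $S$ avoided two of the $m$ paths entirely, the transposition of those two paths would be a nontrivial automorphism fixing $S$ pointwise, contradiction. A secondary subtlety is handling the vertex $u$ itself and the possibility that a terminal path shares its first edge with another — but by definition of terminal vertex and exterior major vertex these paths branch only at $u$, so no such overlap occurs. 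Once these points are nailed down, the counting and the disjointness across $Ex(T)$ are routine.
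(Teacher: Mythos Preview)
Your proposal is correct and follows essentially the same approach as the paper: both arguments hinge on the transposition automorphism that swaps two equal-length terminal paths of an exterior major vertex $u$ while fixing the rest of $T$, forcing $S$ to meet all but one path in each distance-class, and then sum over $Ex(T)$ using disjointness of the sets $N_u$. The only difference is cosmetic: the paper first invokes the fact (from Erwin--Harary) that a minimum determining set of a tree may be taken to consist entirely of pendant vertices, which lets it count terminal vertices directly rather than vertices on terminal paths; your version avoids this citation at the cost of the (correct) extra observation that distinct terminal paths of $u$ share only the vertex $u$.
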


\begin{proof}
Let $S$ be a minimum determining set of $T$. As shown in \cite{erwin}, we can assume that $S$ is only formed
by pendant vertices. Consider a vertex $u\in Ex(T)$ and two of its terminal vertices, say $\ell$ and $\ell'$, such that $d(u,\ell)=d(u,\ell')$.
Clearly, either $\ell$ or $\ell'$ belongs to $S$ (otherwise there is an automorphism interchanging the $u-\ell$ path and the $u-\ell'$ path,
and fixing the remaining vertices of $V(T)$  but $S$ is a determining set; a contradiction).
 Therefore, at least
$ter(u)-ter'(u)$ vertices of $N_u$ are in $S$, and extending this argument to $Ex(T)$ yields the bound.
\end{proof}

We now achieve one of the main results of this section which provides the exact value of
the function $(\dim - \dete)_{|_{\mathcal{T}}} (n)$, where $\mathcal{T}$ denotes the family  of trees.

\begin{theorem}\label{beta-detetrees}
For every $n\geq 49$, it holds that
$$(\dim - \dete)_{|_{\mathcal{T}}} (n)=\lfloor\frac{2}{7}n\rfloor.$$

\end{theorem}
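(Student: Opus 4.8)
The plan is to prove the two inequalities $(\dim-\dete)_{|_{\mathcal T}}(n)\ge\lfloor\frac{2}{7}n\rfloor$ and $(\dim-\dete)_{|_{\mathcal T}}(n)\le\lfloor\frac{2}{7}n\rfloor$ separately. The lower bound is already in hand: in the proof of Theorem~\ref{C4beta-det} the trees $T_{q,s}$ were constructed with $\dete(T_{q,s})=0$ and $\dim(T_{q,s})=\lfloor\frac{2}{7}n\rfloor$, and these are trees, so $(\dim-\dete)_{|_{\mathcal T}}(n)\ge\lfloor\frac{2}{7}n\rfloor$ for $n\ge 49$. Hence the whole content of the theorem is the upper bound: every tree $T$ of order $n$ satisfies $\dim(T)-\dete(T)\le\lfloor\frac{2}{7}n\rfloor$.

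For the upper bound I would first dispose of the case where $T$ is a path, since then $\dim(T)=1$ and the inequality is trivial for $n\ge 49$. So assume $T$ is not a path, and combine Proposition~\ref{betatrees} with Lemma~\ref{detetrees}:
\begin{equation*}
\dim(T)-\dete(T)\le\sum_{u\in Ex(T)}\bigl(ter(u)-1\bigr)-\sum_{u\in Ex(T)}\bigl(ter(u)-ter'(u)\bigr)=\sum_{u\in Ex(T)}\bigl(ter'(u)-1\bigr).
\end{equation*}
Now I would apply Lemma~\ref{ter'}, which gives $ter'(u)-1\le\frac{2}{7}n_u$ for each $u\in Ex(T)$, so that
\begin{equation*}
\dim(T)-\dete(T)\le\frac{2}{7}\sum_{u\in Ex(T)}n_u.
\end{equation*}
The crux is therefore to show $\sum_{u\in Ex(T)}n_u\le n$, i.e.\ that the sets $N_u$ (one shortest path from each exterior major vertex to one of its terminal vertices) can be chosen pairwise disjoint, or at least so that their total size does not exceed $|V(T)|$.

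The main obstacle — and the step requiring genuine care — is exactly this disjointness claim. Two exterior major vertices $u,v$ could have their chosen terminal paths $N_u,N_v$ overlap near a common ancestor vertex; also a single $N_u$ counts the vertex $u$ itself, and $u$ might be the endpoint region where paths from distinct exterior major vertices meet. The plan is to handle this by choosing, for each $u\in Ex(T)$, the path $N_u$ to go from $u$ \emph{down into} a terminal branch of $u$ (the subtree hanging off $u$ that contains a terminal vertex of $u$ and no other major vertex), so that $N_u\setminus\{u\}$ lies entirely inside that pendant branch; distinct exterior major vertices have disjoint such branches, and the only possible collisions are the vertices $u$ themselves. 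One then argues that the exterior major vertices are distinct and that including each $u$ once is still within budget — more precisely, that $\sum_{u\in Ex(T)} n_u \le n$ because the branches $N_u\setminus\{u\}$ are pairwise disjoint subsets of $V(T)$ and each omits all major vertices except possibly contributing $u$, while any vertex of $T$ lies in at most one such branch and is equal to at most one exterior major vertex. A short rooting argument (root $T$ at a non-pendant vertex, orient everything, and observe that each $N_u$ is contained in the closed subtree below $u$ restricted to a single child-branch leading to a terminal leaf) makes the disjointness of the $N_u$'s precise. Once $\sum_{u\in Ex(T)}n_u\le n$ is established, the bound $\dim(T)-\dete(T)\le\frac{2}{7}n$ follows, and since the left side is an integer we get $\dim(T)-\dete(T)\le\lfloor\frac{2}{7}n\rfloor$, completing the proof.
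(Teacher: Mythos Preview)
Your proposal is correct and follows exactly the paper's route: combine Proposition~\ref{betatrees} with Lemma~\ref{detetrees} to obtain $\dim(T)-\dete(T)\le\sum_{u\in Ex(T)}(ter'(u)-1)$, apply Lemma~\ref{ter'} termwise, use $\sum_{u}n_u\le n$, and take the lower bound from the trees $T_{q,s}$ of Theorem~\ref{C4beta-det}. One small correction that actually simplifies your ``main obstacle'': $N_u$ is the set of vertices lying on \emph{some} $u$--$\ell$ path with $\ell$ a terminal vertex of $u$, i.e.\ the union of all such legs (this is what the proof of Lemma~\ref{ter'} needs), not a single chosen path; with this reading every vertex of $N_u\setminus\{u\}$ has degree at most~$2$ and its unique closest major vertex is $u$, so the sets $N_u$ are pairwise disjoint outright and $\sum_u n_u\le n$ is immediate --- which is why the paper leaves this step implicit.
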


\begin{proof} We first prove that $(\dim - \dete) (T)\leq \lfloor\frac{2}{7}n\rfloor$ for any tree $T$ of order $n$.
Thus, we can assume that $T$ is not a path since it is clear that $\dim(P_n)-\dete(P_n)=1-1=0$ for every $n\geq 2$.
By Proposition~\ref{betatrees},
$$
\begin{array}{ccl}
\dim (T) & = &\displaystyle \sum_{u\in Ex(T)} (ter(u)-1) \\
           & = &\displaystyle \sum_{u\in Ex(T)} (ter(u)-ter'(u))+\sum_{u\in Ex(T)} (ter'(u)-1).
\end{array}
$$
Hence, according to Lemma~\ref{detetrees}, we get
$$\dim (T) - \dete (T)\leq     \sum_{u\in Ex(T)} (ter'(u)-1)     \leq\frac{2}{7}n,$$
 the last inequality being a consequence of Lemma~\ref{ter'}.
This shows that $(\dim-\dete)_{|_{\mathcal{T}}}(n)\leq\lfloor\frac{2}{7}n\rfloor$ and equality is given by the
 graphs $T_{q,s}$ constructed in the proof of Theorem~\ref{C4beta-det}, which are trees.
\end{proof}

We want to stress that this last result ensures that trees are not the appropriate family
of graphs for disproving Conjecture~\ref{centralconj} and shows how far is the bound $\Omega(\sqrt n)$ due to Cáceres et al.~\cite{cagapuse}.

Since trees  do not contain $C_4$ as a subgraph, i.e., $\mathcal{T}\subset\mathcal{C}_4$, then
$(\lambda-\dete)_{|_{\mathcal{T}}}(n)\leq(\lambda-\dete)_{|_{\mathcal{C}_4}}(n)=\lfloor\frac{n}{2}\rfloor$, by Theorem~\ref{C4lambda-det}.
Further,  the graphs in the proof of Theorem~\ref{lowerbound} are trees and so we   get the following result.

\begin{theorem}\label{lambda-detetrees}
For every $n\geq 14$, it holds that
$$(\lambda-\dete)_{|_{\mathcal{T}}}(n)=\lfloor\frac{n}{2}\rfloor.$$
\end{theorem}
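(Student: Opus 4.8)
The plan is to prove the two inequalities separately, exactly as the excerpt hints. For the lower bound, I would recall that the graphs $G_r$ and $H_r$ introduced in Section~\ref{sec:lowerbound} are themselves trees: $T_r$ is a path with one pendant vertex, and attaching pendant vertices (the corona product, plus the extra vertex $v_0'$) keeps the graph acyclic. Hence the constructions used in the proof of Theorem~\ref{lowerbound} to witness $(\lambda-\dete)(n)\geq\lfloor\frac{n}{2}\rfloor$ live entirely inside $\mathcal{T}$. Since restricting a maximum to a subclass can only decrease it, but here the witnesses survive the restriction, we get $(\lambda-\dete)_{|_{\mathcal{T}}}(n)\geq\lfloor\frac{n}{2}\rfloor$ for every $n\geq 14$.

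For the upper bound I would invoke the containment $\mathcal{T}\subset\mathcal{C}_4$: a tree has no cycles at all, so in particular no $C_4=K_{2,2}$ as a subgraph. Therefore every tree of order $n$ is a graph in $\mathcal{C}_4$ of order $n$, which gives $(\lambda-\dete)_{|_{\mathcal{T}}}(n)\leq(\lambda-\dete)_{|_{\mathcal{C}_4}}(n)$. By Theorem~\ref{C4lambda-det} the right-hand side equals $\lfloor\frac{n}{2}\rfloor$ for every $n\geq 14$, so $(\lambda-\dete)_{|_{\mathcal{T}}}(n)\leq\lfloor\frac{n}{2}\rfloor$ in the same range. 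Combining the two inequalities yields the claimed equality.

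Honestly, there is no real obstacle here: the statement is a short corollary of Theorems~\ref{lowerbound} and \ref{C4lambda-det} together with the two trivial observations that the extremal trees from Theorem~\ref{lowerbound} are $C_4$-free (indeed acyclic) and that trees form a subclass of $\mathcal{C}_4$. The only point deserving a sentence of care is checking that $G_r$ and $H_r$ really are trees, so that the lower-bound construction transfers verbatim; this is immediate from their description. I would write the proof in essentially two lines, citing the relevant results, and not reprove anything.
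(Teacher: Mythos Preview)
Your proposal is correct and follows essentially the same approach as the paper: the upper bound comes from $\mathcal{T}\subset\mathcal{C}_4$ together with Theorem~\ref{C4lambda-det}, and the lower bound comes from observing that the graphs $G_r$ and $H_r$ in the proof of Theorem~\ref{lowerbound} are trees. The paper presents exactly this argument in the two sentences preceding the statement of the theorem.
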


\section{Concluding remarks and open questions}\label{sec:concluding}

In this paper, we have studied the function $(\dim-\dete)(n)$ for which we have developed
an independent study on    $(\lambda -\dete)(n)$ and $\lambda_{|_{\mathcal C^*}}(n)$.
Thus, we provide lower and upper bounds on these functions  which in particular
improve those given by Cáceres et al.~\cite{cagapuse} for $(\dim-\dete)(n)$.
To do this, we construct two appropriate families of graphs for improving  the lower bound.
For the upper bound, we develop a technique which uses locating-dominating sets as a main tool.
Indeed, we show that $(\dim-\dete)(n)$ and $(\lambda -\dete)(n)$ are bounded above by
the function $\lambda_{|_{\mathcal C^*}}(n)$. To obtain bounds on this function,
we first provide a variant of a well-known   theorem by Ore~\cite{ore} which implies
a number of consequences between the locating-domination number and other graph parameters.
One of these consequences yields a first upper bound on $\lambda_{|_{\mathcal C^*}}(n)$ by means of a classical result
 due to Erd\H{o}s and Szekeres~\cite{szekeres}.

 The second upper bound on  $\lambda_{|_{\mathcal C^*}}(n)$ comes from the designing of a polynomial time algorithm
 that produces both distinguishing sets and determining sets of twin-free graphs. Thus, we also obtain a bound on the determining number
 of a twin-free graph.

 Finally, we restrict ourselves to graphs not having $K_{2,k}$ as a subgraph, thus
  relating  the locating-domination number to the $k$-domination number and
 the matching number. These relations
 produce bounds and exact values of the restrictions of   $(\dim-\dete)(n)$ and $(\lambda -\dete)(n)$
 to the graphs without $C_4$ as a subgraph and the class of trees.
  Specifically, we solve the problem
 first considered   by Cáceres et al.~\cite{cagapuse} about the difference between the determining number
 and the metric dimension of a tree.

 It would be interesting to settle Conjectures \ref{centralconj} and \ref{twin_free_conj}, which predict  the exact values of
the functions $(\dim-\dete)(n)$, $(\lambda -\dete)(n)$ and $\lambda_{|_{\mathcal C^*}}(n)$ . Also,
it remains open the computation of the function $({\rm dim}-{\rm Det})_{|_{\mathcal{C}_4}}(n)$.
 Further, it would be also of interest to find particular families of graphs where the restrictions of
   $(\dim-\dete)(n)$ and $(\lambda -\dete)(n)$ may be computed. Finally, the maximum value of
   the difference between the metric dimension and the locating-domination number is still unknown
   and a study on this function may be proposed.

\bibliographystyle{plain}

\bibliography{biblio_BSGRS}

\end{document}